\documentclass[pdflatex,sn-mathphys-num]{sn-jnl}
\usepackage{enumitem}

\usepackage{graphicx}%
\usepackage{multirow}%
\usepackage{amsmath,amssymb,amsfonts}%
\usepackage{amsthm}%
\usepackage{mathrsfs}%
\usepackage[title]{appendix}%
\usepackage{xcolor}%
\usepackage{textcomp}%
\usepackage{manyfoot}%
\usepackage{booktabs}%
\usepackage{algorithm}%
\usepackage{algorithmicx}%
\usepackage{algpseudocode}%
\usepackage{listings}%

\usepackage{mathptmx}      
\usepackage{latexsym}

\usepackage{comment}

\newtheorem{proposition}{Proposition}[section]
\newtheorem{theorem}[proposition]{Theorem}
\newtheorem{definition}[proposition]{Definition}
\newtheorem{corollary}[proposition]{Corollary}
\newtheorem{lemma}[proposition]{Lemma}
\newtheorem{remark}[proposition]{Remark}

\definecolor{verde}{rgb}{0.0, 0.5, 0.0}
\definecolor{auburn}{rgb}{1, 0.5, 0.15}
\definecolor{bordo}{rgb}{.65, 0.05, 0.25}

\newcommand{\He}{\mathcal{H}}

\newcommand{\Se}{\mathcal{S}}

\begin{document}

\title[Optional Stopping for Superhedging Supermartingales]{Optional Stopping for Superhedging Supermartingales}



\author[1]{\fnm{C.} \sur{Bender}}

\author*[2]{\fnm{S. E.} \sur{Ferrando}}\email{ferrando@torontomu.ca}



\affil[1]{\orgdiv{Department of Mathematics},
\orgname{Saarland University},
\orgaddress{\street{Campus E 2 4},
\city{Saarbrücken},
\postcode{66123},
\country{Germany}}}

\affil[2]{\orgdiv{Department of Mathematics},
\orgname{Toronto Metropolitan University},
\orgaddress{\street{350 Victoria St.},
\city{Toronto},
\postcode{M5B 2K3},
\state{Ontario},
\country{Canada}}}


\date{Received: date / Accepted: date}




\abstract{
Superhedging supermartingales, introduced in \cite{bender3}, are non-probabilistic processes defined via subadditive outer integrals that carry a purely financial interpretation in terms of superhedging cost. Building on the Leinert–K\"onig theory of non-lattice integration, the present paper establishes several results that are classical in probability theory but whose non-probabilistic proofs require fundamentally new arguments: 
(i) a tower inequality for the conditional outer integral 
$\overline{\sigma}_j$ applied at stopping times, reducing to equality when the integrand is conditionally integrable; (ii) three versions of Doob's optional stopping theorem, organised by the class of supermartingale and the range of the stopping times; and (iii) Dubins' upcrossing inequality in both finite- and infinite-time horizons. A key structural result, property $(K)$-a.e., identifies conditions under which the two superhedging operators $\overline{\sigma}_j$ and $\overline{I}_j$ coincide on non-negative functions, extending the scope of all preceding results to the positive operator $\overline{I}_j$. None of the proofs invoke classical measure-theoretic tools; in particular, (classical) integrability and measurability are not assumed. The analogues of classical stochastic results acquire a purely financial interpretation and, in this way, gain depth and generality by providing a context that is independent of any a priori probabilistic structure.



}

\keywords{Superhedging, Non-Probabilistic Supermartingales, Optional Stopping, Dubins' Inequality}


\pacs[MSC Classification]{60G48, 60G42, 60G17}
\maketitle


\section{Introduction}

This paper develops a trajectorial analogue of several cornerstone results of classical martingale theory: Doob's optional stopping theorem, the tower property for conditional expectations, and Dubins' upcrossing inequality, in a setting where no probability measure is given. Instead, the underlying objects are a set $\mathcal{S}$ of price scenarios and a pair of subadditive operators $\overline{\sigma}_j$ and $\overline{I}_j$, each with a direct superhedging interpretation.  Our main contribution is to show that these classical results survive the transition to the non-probabilistic setting, but require entirely new proof techniques.

The framework, developed in detail in \cite{bender1,bender3}, extends the non-lattice integration theory of Leinert \cite{leinert} and König \cite{konig} to a conditional financial setting while maintaining a formal connection with classical probabilistic constructions. In general, the associated integral operators need not arise from measures (see \cite{bender1,bender3}). Nevertheless, the framework supports notions such as supermartingale processes that parallel classical probabilistic concepts while admitting a purely financial interpretation. Section 4 of \cite{bender3} compares this approach with the closely related framework of game-theoretic probability developed in \cite{shafer}. The present paper builds on the definitions and basic framework introduced in \cite{bender3} and develops several classical probabilistic results in this non-probabilistic setting. The proofs are obtained directly within the framework and do not rely on classical probabilistic arguments. Our goal is to further illustrate that a substantial portion of probabilistic methodology can be formulated in a purely hedging-based setting.

We comment on the nature of our main results: a tower inequality for conditional outer integrals, several versions of Doob’s optional sampling theorem, and Dubins’ upcrossing inequality. In classical measure-based probability, these theorems are formulated via expectation operators and require strict measurability and integrability hypotheses—though one might speculate that they could be reformulated using outer expectations, as in \cite{VdVW}, to weaken the latter. In contrast, our results require neither integrability nor measurability because the domains of our outer integral operators are completely unconstrained. Furthermore, while classical proofs rely heavily on the linearity of conditional expectation, Fatou's lemma, and the monotone convergence theorem, none of these tools are available for our subadditive operators. Instead, each proof must be constructed directly from the definition of superhedging and the supermartingale decomposition of \cite{bender3}[Theorem 6.1]. The fact that these arguments nevertheless proceed naturally demonstrates that these classical principles remain valid when framed purely in terms of superhedging.

We remark that we are in a discrete, infinite time, setting and that the base space $\mathcal{S}$ has no restrictions besides some no-arbitrage requirements that need to hold so that our main hypothesis, namely the $(L)-a.e.$ property, holds. The said property allows us access a supermartingale decomposition theorem from \cite{bender3} (here Theorem \ref{thm:decomposition}).
We also note that our outer conditional integral operators are defined everywhere, this is in sharp contrast  to the case of Kolmogorov's conditional expectations that are defined a.e. Relatedly, fundamental null sets in our setting have a superhedging interpretation, a fact that clarifies several of our proof techniques. We also make a fundamental use of stopping times that are naturally defined in our non-probabilistic setting.

The paper is organized as follows: Section \ref{backgroundNotions} presents the basic definitions and the main results
from \cite{bender3} that we rely upon as background material. Section \ref{towerProperty} develops the fundamental tower property in the form of an inequality, we remark that this result requires no hypothesis. An analogue of the classical tower (equality) property is obtained under some integrability hypotheses. Section \ref{optionalStopping} presents three versions of Doob's optional stopping results, mostly organized according to a natural class of supermartingales and hypthesis required on bounded versus infinite time.   Section \ref{dubinsInequality} develops a finite time and an infinite time versions of Dubins' inequality for upcrossings. The infinite time version requires additional hypothesis to hold.  Section \ref{propertyKae} establishes the equality $\overline{\sigma}_j= \overline{I}_j$ on nonnegative functions, this technical result makes several results in the paper available, under the required hypotheses, for both classes of operators. 

\section{Basic setting and definitions}  \label{backgroundNotions}

This section introduces material from \cite{bender3} that is technically needed for the developments of the present paper. We omit any justification, remarks or additional material that is not essentially required and refer the reader to \cite{bender3} for explanations and details.

\subsection{Trajectorial Setting} 
{\bf Trajectory set} {\cite[Definition 1]{ferrando}}
Given a real number $s_0$, a \emph{trajectory set}, denoted by $\mathcal{S}= \mathcal{S}(s_0)$, is a subset of $\mathcal{S}_{\infty}(s_0) =\{S=(S_i)_{i\in \mathbb{N}\cup\{0\}}: S_i\in \mathbb{R},~ S_0=s_0\}.$ We make fundamental use of the following \emph{conditional spaces}; for $S \in \Se$ and $j \geq 0$  set:
$$
\Se_{(S,j)}\equiv\{\tilde{S} \in \Se: \tilde{S}_i= S_i, ~~ 0 \le i \le j\},
$$
the notation $(S,j)$, henceforth referred as a {\it node},  will be used as a shorthand for $\Se_{(S,j)}$. {\it Local} properties are relative to a given node.

\noindent
{\bf Conditional portfolio set} (\cite{bender3}[Definition 2.3] 
For any fixed $S\in \Se$ and $j\ge 0$, $\He_{(S,j)}$ will be the set of all sequences of functions $H = (H_i)_{i \geq j}$, where $H_i: \Se_{(S,j)} \rightarrow \mathbb{R}$ are non-anticipative in the sense: for all $\tilde{S},\hat{S}\in\Se_{(S,j)}$ such that $\tilde{S}_k = \hat{S}_k$ for $j \le k\le i$, then $H_i(\tilde{S}) = H_i(\hat{S})$ (i.e., $H_i(\tilde{S}) = H_i(\tilde{S}_0,\ldots, \tilde{S}_i)$). We introduce the shorthand notation $\mathcal{H}= \He_{(S,0)}$.
No measurability assumptions are imposed on the portfolio maps $H_i$. This is consistent with the use of subadditive outer integral operators in the present framework. Further discussion and connections with outer expectation constructions and game-theoretic probability are provided in \cite{bender3}.

For a node $(S,j)$, $H\in\He_{(S,j)}$, $V\in \mathbb{R}$ and $n\ge j$ we define the portfolio wealth process $\Pi_{j,n}^{V, H}: \mathcal{S}_{(S,j)} \rightarrow \mathbb{R}$, as:
$$
\Pi_{j,n}^{V, H}(\tilde{S}) \equiv V+\sum_{i=j}^{n-1}H_i(\tilde{S})~ \Delta_i \tilde{S},\quad
\mbox{where}\;\: \Delta_i \tilde{S} = \tilde{S}_{i+1}- \tilde{S}_i,~~i\ge j, ~~\tilde{S} \in \mathcal{S}_{(S,j)}.
$$
Notice that $V$ is assumed to be constant on $\mathcal{S}_{(S,j)}$ and so its value could change with $S$, i.e., $V= V(S)$ (depending on the past stock price evolution up to time $j$).  
 
 In the sequel, being $\mathcal{A}$ a set of real valued functions, $\mathcal{A}^+$ will denote the set of its non-negative elements.

\noindent
{\bf  Elementary vector spaces}, for a fixed node $(S,j)$  set
$\mathcal{E}_{(S,j)}= \{f = \Pi_{j, n_f}^{V, H}: H \in \He_{(S,j)},~~V \in \mathbb{R}~~~\mbox{and}~~~n_f \in \mathbb{N}\}.$
%
Observe that $\mathcal{E}_{(S,j)}$ is a real vector space. Its elements are called \emph{elementary functions}.  Let also define
$\mathcal{E}_j = \{f:\Se\rightarrow \mathbb{R}: f|_{\Se_{(S,j)}}\in\mathcal{E}_{(S,j)} \;\; \forall S\in\Se \}$,
where the notation $f|_{\Se_{(S,j)}}$ means that the global domain of $f$, namely $\mathcal{S}$, is being restricted to the subset $\Se_{(S,j)}$. We note in passing some abuse of notation as the same symbol $\Pi_{j, n_f}^{V, H}$ is 
used to denote elements from $\mathcal{E}_{(S,j)}$ and $\mathcal{E}_j$ (in particular the implicit dependence on $S$ is not made explicit in the
case when $\Pi_{j, n_f}^{V, H} \in \mathcal{E}_{(S,j)}$). More details on global versus local portfolios are provided in \cite{bender1}.

\subsection{Fundamental Operators and Almost Everywhere Notions}\label{a.e. section}

The operators introduced in this subsection provide the conditional superhedging and conditional almost everywhere structures underlying the trajectorial supermartingale framework developed in \cite{bender3}. As the optional stopping and convergence results established later will show, this framework is sufficiently robust to support several classical martingale-type constructions in a purely trajectorial setting.

Let $Q$ denote the set of all functions from $\mathcal{S}$ to $[-\infty,\infty]$ and let $P\subseteq Q$ be the subset of non-negative functions. The following conventions are in effect:
\[
0\,\infty=0,\qquad \infty+(-\infty)=\infty,\qquad
u-v\equiv u+(-v), \quad u,v\in[-\infty,\infty],
\]
and $\inf\emptyset=\infty$ (unless indicated otherwise).

We define next the \emph{conditional norm operator} \; $\overline{I}_j : P \rightarrow \mathcal{E}^+_j$, which is a conditional extension of the operator $\overline I$ defined in \cite{ferrando}, and is used to define null sets. 

\begin{definition}\label{Iup_definition}
For a given node $(S, j)$ and a general $f \in P$ define
\begin{equation} \nonumber
\overline{I}_j f (S)\equiv  \inf \left\{\sum_{m \geq 1} V^m: ~~f \leq  \sum_{m \geq 1} \liminf_{n\to \infty}~~\Pi_{j, n}^{V^m, H^m}\;\;\;\mbox{on}\;\; \Se_{(S,j)},\; \Pi_{j, n}^{V^m, H^m}\in \mathcal{E}_{(S,j)}^+  ~~\forall ~~n \geq j \right\}.
\end{equation}
We will use the notation $\overline{I}f \equiv \overline{I}_0f$. We also set, for a general $f \in Q$:
\begin{equation} \nonumber
||f||_j(S) \equiv \overline{I}_j|f|(S)~~\mbox{and}~~||f|| \equiv ||f||_0(S).
\end{equation}
\end{definition}
Notice that $\overline{I}_j f (S)= \overline{I}_j f (S_0, \ldots, S_j)$, i.e. $\overline{I}_j f (\cdot)$ is constant on $\mathcal{S}_{(S,j)}$. Moreover, the positivity constraint imposed on the portfolio values in Definition \ref{Iup_definition} guarantees that $\sum_{m\geq 1}V^m\geq 0$ and consequently $\overline{I}_j f\geq 0$. This positivity structure is also fundamental for the countable subadditivity properties of $\overline{I}_j$ used throughout the paper. We set $||0||_j=0$ and refer to $||\cdot||_j(S)$ as a {\it conditional norm}.

We next introduce the conditional almost everywhere notions associated with the operator $\overline I_j$ (\cite{bender3}[Definition 2.7]). Given a node $(S,j)$, a function $g\in Q$ is a \emph{conditionally null function at} $(S,j)$  if:\[\|g\|_j(S)=0.\]
 A subset $E\subset\Se$ is a \emph{conditionally null set at $(S,j)$} if $\|\mathbf{1}_E\|_j(S)=0$. A property is said to hold conditionally a.e. at $(S,j)$
(or equivalently: the property holds ``a.e. on $\mathcal{S}_{(S,j)}$") if the subset of  $\Se_{(S,j)}$ where it does not hold
is a conditionally null set at $(S,j)$. In particular, the latter definition applies to $g=f$ a.e. on $\mathcal{S}_{(S,j)}$, which also will be noted with $g\doteq f$ when $j=0$.

Notice that when $j=0$, the previous notions do not depend on $S$ and we apply the abbreviation ``a.e.'' for ``a.e. at $(S,0)$''. Moreover, $E\subset\Se$ is called a \emph{null set} and $g$ is called a \emph{null function}, if $\|\mathbf{1}_E\|=0$ and $\|g\|=0$, respectively.
We refer to \cite{bender3}[propositions 2.8 and 2.9] for basic properties of $\overline{I}$ and null sets. 

All appearing equalities and inequalities are valid for all points in the spaces where the functions are defined unless qualified by an explicit a.e. The notation
$f \dot{=} g$ ($f \dot{\leq} g$) is also used for the equality (inequality) being valid only a.e.

\vspace{.1in}
We introduce next the operator $\overline{\sigma}_j:Q\rightarrow\mathcal E_j$, referred to as the \emph{conditional superhedging operator} (or conditional outer integral). This operator provides the basic conditional structure underlying the trajectorial supermartingales and stopping-time results developed later.

\noindent
{\bf Conditional Outer Integral,} or a node $(S,j)$ and a general $f \in Q$,
\begin{equation} \nonumber
\overline{\sigma}_j f(S) \equiv  \inf \left\{\sum_{m \geq 0}V^m: ~~f \leq  \sum_{m \geq 0} f_m \;\;\mbox{on}\; \Se_{(S,j)}\right\},
\end{equation}
where $f_0=\Pi_{j, n_0}^{V^0,H^0}\!\!\in \mathcal{E}_{(S,j)}; \; \mbox{for}\; m \geq 1, \; f_m= \liminf_{n\to\infty}~~\Pi_{j, n}^{V^m, H^m},~~\mbox{and}\;\;\Pi_{j, n}^{V^m, H^m}\!\!\in \mathcal{E}_{(S,j)}^+~~\forall ~~~n \geq j$.
Define also $\underline{\sigma}_j f(S) \equiv -\overline{\sigma}_j(-f) (S)$.
We will use the notation $\overline{\sigma}f \equiv \overline{\sigma}_0 f$.

The following definitions will help us to provide a more uniform presentation of the results.
We will set, for $S \in \mathcal{S}$,
\begin{equation} \label{atTheEndOfTime}
\mathcal{S}_{(S, \infty)} \equiv \{S\}~~~\mbox{and}~~\overline{\sigma}_{\infty} f (S)=  f(S)~\mbox{for}~ \in Q,
\end{equation}
we also set $\underline{\sigma}_{\infty} f(S)= - \overline{\sigma}_{\infty} (-f)(S)
= f(S)$.

\noindent The quantity $\overline{\sigma}_j f(S)$ depends on $S$ only through the coordinates $(S_0,\ldots,S_j)$. Also, for notational convenience, the term $f_0$ may be written in the same form as the remaining terms by setting $f_0=\liminf_{n\to\infty}\Pi_{j,n}^{V^0,H^0}$,
with $H_i^0\equiv 0$ for $i\ge n_0$.

The need to rely on $\liminf$ in the definitions of $\overline{I}_j$ and $\overline{\sigma}_j$ appears so as to handle particular null sets that emerge in infinite time such as in Doob's poitnwise convergence theorem (\cite{bender3}[Theorem 7.1]).

\subsection{Property $(L)$}
\vspace{.1in}
The property $(L_{(S,j)})$, introduced in the following definition,
generalizes a non-conditional version from  \cite{leinert} and will be called (conditional) \emph{continuity from below}.
Property $(L_{(S,j)})$ will be key to obtain several of the results in the paper, in particular it will imply that $\overline{\sigma}_j$ preserves the property of non-negativity.

\begin{definition}[Property $(L_{(S,j)})$] \label{lPropertyWithLimInf}
For a fixed node $(S,j)$, $~~~~f = \Pi^{V, H}_{j, n_f}\in \mathcal{E}_{(S, j)}$
and $f_m = \liminf_{n \rightarrow \infty} \Pi^{V^m, H^m}_{j, n}$ with $\Pi^{V^m, H^m}_{j, n} \in \mathcal{E}_{(S, j)}^+~\mbox{for all}~ n \geq j~\mbox{and}~ m \geq 1$, ~define property $(L_{(S,j)})$ by
\begin{equation} \nonumber 
(L_{(S,j)}):\quad f \leq \sum_{m \geq 1} f_m\;\;\mbox{on}\; \Se_{(S,j)}
\implies V \leq \sum_{m \geq 1}  V^m.
\end{equation}
If $(L_{(S,j)})$ holds for $S\in\Se\;\; \overline{I}-a.e.$, it will be written $(L_j)$ holds $a.e.$ (or $\overline{I}-a.e.$ in case of possible ambiguity).
\end{definition}


Since $(L_{(S,0)})$ does not depend on $S$, it will be denoted by $(L)$. If $(L_{(S,j)})$ holds, it follows that $\overline{\sigma}_j f(S) = V$ for $f= \Pi^{V, H}_{j,n} \in \mathcal{E}_{(S, j)}$ (see \cite[Proposition 3.3, item 4]{bender3}). 
In particular,  \cite[Proposition 3.3]{bender3} shows that property $(L_{(S,j)})$
is equivalent to $0 \leq \overline{\sigma}_j 0 (S)$ and so to $0 = \overline{\sigma}_j 0(S)$ which in turn implies $\underline{\sigma}_j f(S)  \leq \overline{\sigma}_jf(S)$
for any $f:\mathcal{S} \rightarrow [- \infty, \infty]$ (i.e. $f \in Q$). 

Note that the conditional outer integral $\overline{\sigma}_j f(S)$ and the conditional inner integral  $\underline{\sigma}_j f(S)$ are defined at any node $(S,j)$ and for any function $f\in Q$.  However, $\overline{\sigma}_j f(S)=-\infty$ and  $\underline{\sigma}_j f(S)=+\infty$ for any bounded function $f$, if  $(L_{(S,j)})$ fails at node $(S,j)$. Therefore we shall impose the following assumption for several results.

\vspace{.1in}
\noindent
\begin{definition}[Assumption $(L)$-a.e.]: \label{assumptionL-ae}
 we will write $(L)$-a.e. whenever the following two properties hold:
 \begin{itemize}
\item[i)] $(L)$ (i.e. $(L_{(S,0)})$) holds,

\item[ii)] 
\begin{equation}\label{eq:N^L}
\mathcal{N}^{(L)} \equiv \{S\in\Se: \exists j\ge 0 \; s.t. \; (L_{(S,j)})\; \mbox{fails}\}~~\mbox{is a null set}. 
\end{equation}
\end{itemize}
\end{definition}

\vspace{.1in}
\noindent
The validity of $(L_{(S,j)})$ and $(L)-a.e.$ are established, under appropriate conditions, in \cite{bender3}.

\begin{remark} \label{L0ForNonTrivialityOfComplement}
Therefore, property $(L)$-a.e. implies that the property
$[(L_{(S, j)})~~\mbox{holds for all}~~~j \geq 0]$ is valid $\overline{I}$-a.e. It could be that $\mathcal{S} \setminus \mathcal{N}^{(L)} = \emptyset$; a general assumption to obstruct this from happening is to assume that $(L_{(S,0)})$ holds. In fact if $(L_{(S,0)})$ is valid, one can check that $\overline{I}({\bf 1}_{\mathcal{S}})=1$ and so $1= \overline{I}({\bf 1}_{\mathcal{S}}) \leq
\overline{I}({\bf 1}_{\mathcal{S} \setminus \mathcal{M}})+ \overline{I}({\bf 1}_{\mathcal{M}}) \leq  \overline{I}({\bf 1}_{\mathcal{S} \setminus \mathcal{M}}) \leq 1$ where $\mathcal{M}$ is any arbitrary null set and then $\overline{I}({\bf 1}_{\mathcal{S} \setminus \mathcal{N}^{(L)}})=1$ whenever $(L)$-a.e. and $(L_{(S,0)})$ both hold.
\end{remark}

\begin{definition}[Conditional Integrable Functions] \label{integrable}
$f \in Q$
is called a \emph{conditional integrable} at $j$ function if it satisfies:
\begin{equation} \nonumber
\underline{\sigma}_j f = \overline{\sigma}_j f,~~ \overline{I}-a.e.
\end{equation}
In particular, $f \in Q$ is integrable if
 $\underline{\sigma}_0 f= \overline{\sigma}_0 f$.
\end{definition}
Integrable functions will play a side role in our work that deals mostly with inequalities, integrability appears when requiring equality in our tower property and when specializing supermartingales to martingales in our trajectorial setting.

\vspace{.15in}
We next introduce trajectorial supermartingales, submartingales, and martingales (following \cite{bender3}).
Throughout, $T \in \mathbb{N}\cup\{\infty\}$ denotes the time horizon; whenever not explicitly indicated, it means a given result holds for either finite time i.e. $T< \infty$ or infinite time $T= \infty$. A sequence $\{f_j\}_{0 \leq j < T+1}$, $f_j \in Q$, is said to be non-anticipative if for each $S\in\Se$ and $0 \leq j < T+1$, $f_j$ is constant on $\Se_{(S,j)}$. The index $T+1$, either finite or infinite, may need to be available as a possible time for some of our stopping times. This is the case for upcrossing times that are defined thorough an infimum which, in turn, may be assigned the index $T+1$ whenever the defining conditions amount to empty sets. In those instances, we will define $f_{T+1}$ appropriately.  The case of finite sequences $f_j$, $0 \leq j \leq T < \infty$, whenever convenient, will be embedded in an infinite time case via $f_j \equiv f_T$ for all $T+1 \leq j < \infty$. We also write $\{f_j\}_{j \geq 0}$ whenever $0 \leq j < \infty$ that corresponds to the case $T= \infty$.

\begin{definition} \label{superMartingale}
Consider a sequence $\{f_n\}= \{f_n\}_{0 \leq n < T+1}$ of non-anticipative functions $f_n:\mathcal{S} \rightarrow [- \infty, \infty]$, $n\ge0$.

\vspace{.1in}\noindent $\{f_n\}$ is a \emph{supermartingale} sequence if
\begin{equation}  \nonumber 
\overline{\sigma}_j f_{j+1} \leq f_j~~a.e.\;\; 0 \leq j < T,
\end{equation}
$\{f_n\}$ is a \emph{submartingale} if
\begin{equation} \nonumber 
f_j \leq \underline{\sigma}_j f_{j+1}~~a.e.\;\;0 \leq j < T,
\end{equation}
$\{f_n\}$ is a \emph{martingale} if
\begin{equation}\nonumber 
\underline{\sigma}_j f_{j+1}= \overline{\sigma}_j f_{j+1}= f_j~~a.e.\;\; 0 \leq j < T.
\end{equation}
\end{definition}

The following result, \cite[Theorem 6.1]{bender3},  will be a main tool used several times in the paper; we present it here for the reader's convenience.
\begin{theorem}[Supermartingale decomposition] \label{thm:decomposition} 
Under Assumption $(L)$-a.e.,	let $(f_j)_{j\ge 0}$ be a sequence of
	non-anticipative real-valued functions. Then, the following assertions are equivalent:
	\begin{itemize}
		\item [(i)] $(f_j)_{j\ge 0}$ is a supermartingale.
		\vspace{.05in}
		\item [(ii)] For every sequence $(\delta_j)_{j\ge 0 }$ of positive real numbers there are sequences $(H_j)_{j\ge 0}$ and $(A_j)_{j\ge 0}$, of non-anticipative real-valued functions defined on 
		$\mathcal{S}$, such that $(A_j)_{j\ge 0}$ is nondecreasing, $A_0 = 0$, and
		\begin{equation} \label{supermartingaleDecomposition}
		f_i(S)=f_0+\sum_{j=0}^{i-1}H_j(S)\Delta_jS-A_i(S)+\sum_{j=0}^{i-1}\delta_j,
		\end{equation}
		for every $S \in \Se \setminus N_f$ and $i\ge 0$. Here $N_f$ is an $\overline{I}$-null set independent of  $(\delta_j)_{j\ge 0 }$.
	\end{itemize}
\end{theorem}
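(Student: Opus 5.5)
The plan is to prove the two implications separately. For $(ii)\Rightarrow(i)$ I would read the decomposition one step at a time. For $(i)\Rightarrow(i i)$ I would build $H_j$ and the increments $A_{j+1}-A_j$ node by node from near-optimal superhedges, using $(L_{(S',j+1)})$ to collapse a multi-period superhedge to a one-period one. Throughout, the exceptional set will be the union of the following sets:
\begin{itemize}
\item $\mathcal{N}^{(L)}$;
\item the countably many null sets where $\overline{\sigma}_j f_{j+1}\le f_j$ fails;
\item possibly one extra null set per $j$ coming from the one-step reduction below.
\end{itemize}
This union is a countable union of null sets, hence null by countable subadditivity of $\overline I$. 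It does not involve $(\delta_j)$, which gives the required independence of $N_f$.

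\textbf{$(ii)\Rightarrow(i)$.} Fix $j$ and a sequence $(\delta_j)$. Off $N_f$, subtracting the identity (\ref{supermartingaleDecomposition}) at $i=j$ from the one at $i=j+1$ gives
\[
f_{j+1}=f_j+H_j\Delta_jS-(A_{j+1}-A_j)+\delta_j\le f_j+\delta_j+H_j\Delta_jS .
\]
On $N_f$ I would add the term $\infty\cdot\mathbf 1_{N_f}$. I would then use a (presumably standard, from \cite{bender3}) fact: an $\overline I$-null set is conditionally null at $(S,j)$ for $\overline I$-a.e. $S$. At such nodes, $\infty\cdot\mathbf 1_{N_f}$ is dominated by a countable sum of nonnegative $\liminf$-portfolios of arbitrarily small total cost. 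Adding this to the elementary function $f_j(S)+\delta_j+H_j\Delta_j\tilde S\in\mathcal E_{(S,j)}$ shows $\overline\sigma_jf_{j+1}(S)\le f_j(S)+\delta_j$ a.e. Finally I would let $\delta_j=1/k\to0$ along a countable family and take the union of the resulting null sets.

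\textbf{$(i)\Rightarrow(ii)$.} Fix $S\notin$ exceptional set and $j$. Since $\overline\sigma_jf_{j+1}(S)\le f_j(S)<f_j(S)+\delta_j$, there is a superhedge
\[
f_{j+1}\le \Pi^{V^0,H^0}_{j,n_0}+\sum_{m\ge1}\liminf_n\Pi^{V^m,H^m}_{j,n}\quad\text{on }\Se_{(S,j)},\qquad \sum_{m\ge0}V^m<f_j(S)+\delta_j .
\]
The key step is to pass to a one-step hedge.

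Restrict this inequality to a node $(S',j+1)$ with $S'\in\Se_{(S,j)}$. Since $f_{j+1}$ is constant on $\Se_{(S',j+1)}$, the restricted inequality is a superhedge at time $j+1$ with initial capitals $V^m+H^m_j(S)\Delta_jS'$. Property $(L_{(S',j+1)})$, valid for a.e. $S'$, therefore yields
\[
f_{j+1}(S')\le \sum_{m\ge0}\bigl(V^m+H^m_j(S)\Delta_jS'\bigr).
\]

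The \textbf{main obstacle} is aggregating these terms into a single real number $H_j(S)$. For $m\ge1$, nonnegativity of $\Pi^{V^m,H^m}_{j,j+1}$ gives $V^m+H^m_j\Delta_jS'\ge0$ for all $S'$.
\begin{itemize}
\item If $\Delta_jS'$ takes both signs on the node, this forces $|H^m_j|\le V^m/c$, where $c>0$ is a positive and a negative increment in absolute value. Hence $\sum_m H^m_j$ converges absolutely, and I can set $H_j(S)=\sum_{m\ge0}H^m_j(S)$.
\item If all increments at the node have one sign (or vanish), this is the delicate case. There I would first try to drop the terms with a one-sided direction, bounding them by their capital. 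Failing that, I would show such nodes are either harmless (taking $H_j=0$ still works because $f_{j+1}\le f_j+\delta_j$ there) or form part of the null set where $(L)$ fails.
\end{itemize}

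The choice of $H_j(S)$ is made once per node $(S,j)$, so it is non-anticipative; no measurability is needed.

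With $H_j$ in hand, I would set
\[
A_{j+1}-A_j\equiv f_j+\delta_j+H_j\Delta_jS-f_{j+1}\ \ge0,
\]
with $A_0=0$. This increment is non-anticipative at time $j+1$, so $(A_j)$ is nondecreasing. Telescoping then gives (\ref{supermartingaleDecomposition}) on the complement of the exceptional set.
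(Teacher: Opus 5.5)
Your $(ii)\Rightarrow(i)$ argument is fine. The fact you need is Corollary~\ref{towerPropertyForI-bar} with $\rho=0$, $\tau=j$, which makes $\overline{I}_j(\infty\mathbf{1}_{N_f})$ a null function. In $(i)\Rightarrow(ii)$, the collapse via $(L_{(S',j+1)})$ and the absolute convergence of $\sum_m H^m_j(S)$ at up-down nodes are also correct (the latter is essentially the Aggregation Lemma~6.4 of \cite{bender3}). Flat nodes are trivial, and type II arbitrage nodes lie in $\mathcal{N}^{(L)}$ by \cite[Proposition 3.10]{bender3}. (The paper itself only quotes the theorem from \cite[Theorem 6.1]{bender3}, so I am judging your argument on its own.)

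\textbf{The gap.} The gap is exactly the case you flag: a type I arbitrage node, say all increments $\ge 0$ with some $=0$ and some $>0$. Neither of your fallbacks works there. Consider the following example.
\begin{itemize}
\item Let $\mathcal{S}=\{S^0,S^1,S^2,\dots\}$ with $S^0_1=s_0$, $S^k_1=s_0+1/k$, all paths constant after time $1$.
\item Let $f_0=0$, $f_1(S^0)=0$, $f_1(S^k)=k$, and $f_j=f_1$ for $j\ge 1$.
\end{itemize}
Because of $S^0$, $(L)$ holds at the root, and all later nodes are singletons, so $\mathcal{N}^{(L)}=\emptyset$. Countably many zero-cost copies of $H_0\equiv 1$ sum to $+\infty$ on every $S^k$, so $\overline{\sigma}_0 f_1\le 0=f_0$ and $(f_j)$ is a supermartingale. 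Yet your options all fail:
\begin{itemize}
\item $H_0=0$ does not work, since $f_1(S^k)=k>\delta_0$.
\item No real $H_0$ satisfies $k\le \delta_0+H_0/k$ for all $k$.
\item ``Bounding one-sided terms by their capital'' is impossible: $V^m+H^m_0\Delta_0 S'\ge 0$ gives no upper bound on $H^m_0$ when all increments are $\ge 0$.
\end{itemize}
So infinitely many $S^k$ must go into $N_f$, even though $(L)$ holds along them.

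\textbf{The fix.} What is missing is a second kind of exceptional set, null because of arbitrage rather than failure of $(L)$. Let $E_j$ be the set of $\tilde S$ such that $(\tilde S,j)$ is a type I arbitrage node and $\Delta_j\tilde S\neq 0$. Define $G_j(\tilde S)=\pm1$ according to the sign of the increments at such a node, $G_j=0$ elsewhere, and $G_i=0$ for $i\ne j$. Then $\Pi^{0,G}_{0,n}\ge 0$ for all $n$ and $\Pi^{0,G}_{0,n}>0$ on $E_j$ for $n>j$. Hence $\infty\mathbf{1}_{E_j}\le\sum_{m\ge1}\liminf_n\Pi^{0,G}_{0,n}$ at zero cost, and $E_j$ is null. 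This is precisely the second alternative in the stopping time $\tau^{\#}$ in the proof of Theorem~\ref{nodewiseImplicationOfK}.

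To finish:
\begin{itemize}
\item Add $\bigcup_j E_j$ to $N_f$; it does not depend on $(\delta_j)$.
\item Set $H_j(S)=0$ at type I nodes. On the surviving branch $\Delta_j S'=0$, your $(L_{(S',j+1)})$ reduction gives $f_{j+1}(S')\le\sum_m V^m<f_j(S)+\delta_j$.
\item Define $A_{j+1}-A_j$ as the positive part of $f_j+\delta_j+H_j\Delta_jS-f_{j+1}$, so that $(A_j)$ is nondecreasing on all of $\mathcal{S}$ and not only off the exceptional set.
\end{itemize}
With these additions your argument goes through.
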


Property $(P)$, see below, will be used in conjunction with Theorem \ref{thm:decomposition} by relying on \cite[Lemma 7.3]{bender3}. The property provides a sufficient condition to establish
the positivity required of elementary portfolios in settings where time $T=\infty$. For introducing $(P)$ below, we recall from \cite{bender3} that a node $(S,j)$ is said to be an \emph{up-down node}, if there are $S^1,S^2\in \Se_{(S,j)}$ such that $S^1_{j+1}<S_j<S^2_{j+1}$ (see also Definition \ref{typesOfNodes}).

\begin{definition}[(P)]  \label{def:Positivity}
The following  property is relative to $\mathcal{S}$.
\begin{itemize}[labelwidth=!, leftmargin=*, align=left]

\item[(P)]   Whenever $(S,k)$ is an up-down node such that $(L_{(S,k)})$ holds and $(L_{(S,k+1)})$ fails, then there are $S^1, S^2\in \Se_{(S,k)}$ such that $S^1_{k+1}> S_{k+1}> S^2_{k+1}$ and $(L_{(S^n, k+1)})$ holds  for $n =1,2$.
\end{itemize}
\end{definition}

\section{Tower Property for Superhedging Operators}  \label{towerProperty}

We first collect some properties of stopping times; the main result of the section is a version of the tower property for the conditional outer integrals at stopping times. Due to the subadditivity of $\overline{\sigma}_j$, the result holds in the form of an inequality in our setting. The full analogue version of the classical tower property is obtained when we apply $\overline{\sigma}_j$ to conditionally integrable functions
(as per Definition~\ref{integrable}).

Trajectory based stopping times as contrasted to classical developments based on filtrations (but both approaches are closely related, see \cite{shiryaev}).

\vspace{.05in}
\begin{definition}(Stopping Time \cite[Definition 5.3]{bender3})  \label{stoppingTimeDefinition}
Given a Trajectory Space $\Se$, a \emph{trajectory based stopping time} (or \emph{stopping time} for short) is a function $\tau:\Se\rightarrow \mathbb{N}\cup\{\infty\}$ such that:
\begin{equation*}
    \text{for any}\quad S,S'\in\Se\quad\text{if}\quad S_k=S'_k\quad\text{for}\quad 0\leq k\leq \tau(S),\quad\text{then}\quad \tau(S)=\tau(S').
\end{equation*}
\end{definition}

 If there exists one trajectory $S\in\Se $ such that $\tau(S)=0, $ then for all $\tilde{S}\in\Se,$ we have $\tilde{S}_0=S_0=s_0$ and so $\tau(\tilde{S})=0$ and $\{\tau=0\}=\Se.$ Therefore, we have either: $\{\tau=0\}=\Se$ or $\{\tau=0\}=\phi$.

Notice that
\begin{equation} \nonumber
\mbox{if}~~S\in\{\tau=m\}~~\mbox{then}~~~\Se_{(S,m)}\subseteq\{\tau=m\}.
\end{equation}


\vspace{.1in}
We stress that the next Lemmas \ref{constantCharactFunctions}--\ref{st-portfolio} and Theorem \ref{mainDirectionOfTowerProperty s-t} are valid without imposing the assumption $(L)$-a.e.

\vspace{.1in}
\begin{lemma}\label{constant1}  \label{constantCharactFunctions}
 Let $\tau, \kappa$ be stopping times, then, the functions ${\bf 1}_{\{\tau=\kappa\}}, {\bf 1}_{\{\tau\leq \kappa\}},  {\bf 1}_{\{\tau< \kappa\}},  {\bf 1}_{\{\tau\geq \kappa \}}$, ${\bf 1}_{\{\tau> \kappa \}}$ are constant on $\Se_{(S,\kappa(S))}$. 
\end{lemma}
\begin{proof}
Let $\tilde{S}\in \mathcal{S}_{(S,\kappa(S))}$ i.e. $\tilde{S}_i = S_i$, for $0 \leq i \leq \kappa(S)$, so $\kappa(\tilde{S})=\kappa(S)$.
If $\tau(S) = \kappa(S)$, then $\tilde{S}_i = S_i$ for $0 \leq i \leq \tau(S)$, thus $\tau(\tilde{S})=\tau(S)= \kappa(S)=\kappa(\tilde{S})$. While if $\tau(S)\neq\kappa(S)$, it follows that $\tau(\tilde{S})\neq\kappa(\tilde{S})$ (otherwise,  $\tau(\tilde{S})=\kappa(\tilde{S})=\kappa(S)$) and so $\tau(S)=\tau(\tilde{S})=\kappa(\tilde{S})=\kappa(S)).$ Thus ${\bf 1}_{\{\tau=\kappa\}}$ is constant on $\Se_{(S,\kappa(S))}$.

Since any function is constant on $\Se_{(S,\infty)}=\{S\}$, it is enough to consider $\kappa(S)<\infty$ for the other cases.
If $\tau(S)<\kappa(S)+1$ then $\tau(S)\le\kappa(S)$ thus, as before, $\tau(\tilde{S})=\tau(S)<\kappa(S)+1=\kappa(\tilde{S})+1$. While if $\tau(S)\ge\kappa(S)+1$, then also $\tau(\tilde{S})\ge\kappa(\tilde{S})+1$ (otherwise, $\tau(\tilde{S})< \kappa(\tilde{S})+1$
and so $\tau(S)=\tau(\tilde{S})<\kappa(\tilde{S})+1=\kappa(S))+1$). Thus ${\bf 1}_{\{\tau< \kappa+1\}}={\bf 1}_{\{\tau\leq \kappa\}}$ is constant on $\Se_{(S,\kappa(S))}$. Since ${\bf 1}_{\{\tau> \kappa\}}={\bf 1}-{\bf 1}_{\{\tau\leq \kappa\}}$, $ {\bf 1}_{\{\tau< \kappa\}}= {\bf 1}_{\{\tau\leq  \kappa\}}-  {\bf 1}_{\{\tau=  \kappa\}}$, and ${\bf 1}_{\{\tau\geq \kappa\}}={\bf 1}-{\bf 1}_{\{\tau< \kappa\}}$, the proof is complete.
\end{proof}

\begin{lemma} \label{takingOutCharacteristicFunctions}
Let $\tau$ be a stopping time, $f \in Q$ and fix $T \in \mathbb{N}\cup \{\infty\}$.
The following holds at all $S \in \mathcal{S}$:
\begin{itemize}
\item[i)]
\begin{equation} \nonumber
\overline{\sigma}_{\tau(S)}({\bf 1}_{\{\tau < T\}}~f)(S) \leq {\bf 1}_{\{\tau < T\}}(S)~\overline{\sigma}_{\tau(S)}f(S).
\end{equation}
\item[ii)]
\begin{equation} \nonumber
\overline{\sigma}_{\tau(S)}({\bf 1}_{\{\tau = T\}}~f)(S) \leq {\bf 1}_{\{\tau = T\}}(S)~\overline{\sigma}_{\tau(S)}f(S).
\end{equation}
\end{itemize}
Moreover, the above inequalities are actually equalities if
$\overline{\sigma}_{\tau(S)} 0(S) =0$
(which, as we have mentioned, it is equivalent to the fact that $(L_{(S, \tau(S))})$ holds).
\end{lemma}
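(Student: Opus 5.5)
The plan is to fix $S\in\mathcal{S}$, set $k=\tau(S)$, and use Lemma \ref{constantCharactFunctions} with $\kappa$ the constant stopping time $T$ (or, equivalently, the stopping-time property directly). This shows that ${\bf 1}_{\{\tau<T\}}$ and ${\bf 1}_{\{\tau=T\}}$ are constant on $\Se_{(S,k)}$. Indeed, for $\tilde S\in\Se_{(S,k)}$ we have $\tilde S_i=S_i$ for $i\le \tau(S)$, hence $\tau(\tilde S)=\tau(S)$. So both indicators take on $\Se_{(S,k)}$ the constant value $c\in\{0,1\}$ they have at $S$. When $k=\infty$, the node $\Se_{(S,\infty)}=\{S\}$ and $\overline{\sigma}_\infty$ is evaluation, so both claims hold trivially with equality (using $0\cdot(\pm\infty)=0$). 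We may therefore assume $k<\infty$.

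Since $\overline{\sigma}_k g(S)$ only depends on the values of $g$ on $\Se_{(S,k)}$, we get $\overline{\sigma}_k({\bf 1}_{\{\tau<T\}}f)(S)=\overline{\sigma}_k(cf)(S)$, and similarly for ii). We split according to $c$.

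\emph{Case $c=1$.} Here $cf=f$ on the node, and both sides coincide, giving equality.

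\emph{Case $c=0$.} The left side is $\overline{\sigma}_k 0(S)$ and the right side is $0\cdot\overline{\sigma}_k f(S)=0$ by the convention $0\,\infty=0$. So we must show $\overline{\sigma}_k0(S)\le 0$. This follows by choosing the trivial superhedge: take $f_0=\Pi^{0,0}_{k,n_0}\equiv 0$ and $f_m\equiv 0$ (that is, $V^m=0$, $H^m=0$, which are admissible non-negative elementary functions). Then $0\le\sum_m f_m$ on $\Se_{(S,k)}$ with total cost $0$, so the infimum is at most $0$. Under $\overline{\sigma}_k0(S)=0$, which is equivalent to $(L_{(S,k)})$ as quoted from \cite[Proposition 3.3]{bender3}, this becomes $0=0$, giving equality.

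The only delicate points are bookkeeping ones. First, the locality of $\overline{\sigma}_k$: the definition involves only inequalities on $\Se_{(S,k)}$, so the functions may be replaced by their restrictions. Second, the extended-arithmetic conventions for $0\cdot(\pm\infty)$. I expect no genuine obstacle; the main point is verifying the constancy of the indicators on the node, which is exactly the stopping-time property.
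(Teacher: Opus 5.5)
Your proposal is correct and follows essentially the paper's proof: the stopping-time property makes ${\bf 1}_{\{\tau<T\}}$ (resp.\ ${\bf 1}_{\{\tau=T\}}$) constant on $\Se_{(S,\tau(S))}$. This gives equality when the indicator equals $1$, and when it equals $0$ it reduces the claim to $\overline{\sigma}_{\tau(S)}0(S)\le 0$, with equality under $(L_{(S,\tau(S))})$. One small slip: Lemma \ref{constantCharactFunctions} with $\kappa=T$ gives constancy on $\Se_{(S,T)}$, not on $\Se_{(S,\tau(S))}$, so you need $\kappa=\tau$ with the constant $T$ in the other slot; your direct argument $\tau(\tilde S)=\tau(S)$ already covers this.
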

\begin{proof}
We only provide the proof of item $i)$, the proof of $ii)$ is similar.

Consider $S$ satisfying $\tau(S) <T$, as $\tau$ is a stopping time, it follows that
$\tau(\hat{S}) <T$ for all $\hat{S} \in \mathcal{S}_{(S, \tau(S))}$. Therefore
${\bf 1}_{\{\tau < T\}}~f =f$ on $\mathcal{S}_{(S, \tau(S))}$ and so
$\overline{\sigma}_{\tau(S)}({\bf 1}_{\{\tau < T\}}~f)(S)= \overline{\sigma}_{\tau(S)}f(S) = {\bf 1}_{\{\tau < T\}}(S)~\overline{\sigma}_{\tau(S)}f(S)$. So this case
gives an actual equality in $i)$. Assume now that $\tau(S) \geq T$, as
argued above we obtain $\tau(\hat{S}) = \tau(S) \geq T$ for all $\hat{S} \in \mathcal{S}_{(S, \tau(S))}$. Therefore the left hand side of $i)$ evaluates to
$\overline{\sigma}_{\tau(S)}(0~f)(S) \leq 0 =  {\bf 1}_{\{\tau < T\}}(S)~\overline{\sigma}_{\tau(S)}f(S)$. From this computation we also see that equality holds in  this case as well if
$\overline{\sigma}_{\tau(S)} 0(S) =0$.
\end{proof}

The next lemma is \cite[Lemma 5.4]{bender3} and displayed here for convenience.
\begin{lemma}\label{st-portfolio} Let $\tau$ a stopping time and $H^k=(H^k_i)_{i\ge 0},\;k=1,2$ sequences of non-anticipative functions. For $S\in\Se, j\ge 0$ define the following functions on $\mathcal{S}_{(S,j)}$:
\[
H^{\tau}_i(\tilde{S})=\left\{
\begin{array}{lll}
H^1_i(\tilde{S}) & \mbox{if} & j\le i < \tau(\tilde{S})\\
H^2_i(\tilde{S}) & \mbox{if} & \tau(\tilde{S})\le i.
\end{array}
\right.
\]
Then $H^{\tau}=(H^{\tau}_i)_{i\ge 0}$ is a sequence of non-anticipative functions.
\end{lemma}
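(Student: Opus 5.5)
We verify the non-anticipativity condition directly, one index $i\ge j$ at a time. Fix $i \ge j$ and take $\tilde{S},\hat{S}\in\Se_{(S,j)}$ with $\tilde{S}_k=\hat{S}_k$ for $0\le k\le i$; the agreement for $k\le j$ is automatic since both lie in $\Se_{(S,j)}$. We must show $H^{\tau}_i(\tilde{S})=H^{\tau}_i(\hat{S})$. The plan is to first show that both trajectories fall into the same branch of the definition of $H^\tau_i$. Once that is known, the conclusion follows from the assumed non-anticipativity of $H^1$ or $H^2$, since both are evaluated at time $i$ on paths agreeing up to $i$.

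The key step is the claim that the event $\{\tau\le i\}$ is determined by the coordinates $S_0,\dots,S_i$. Suppose $\tau(\tilde{S})\le i$. Then $\tilde{S}$ and $\hat{S}$ agree on $0\le k\le \tau(\tilde{S})$, so Definition \ref{stoppingTimeDefinition} gives $\tau(\hat{S})=\tau(\tilde{S})\le i$. The same argument with the roles of $\tilde{S}$ and $\hat{S}$ exchanged shows that $\tau(\hat{S})\le i$ forces $\tau(\tilde{S})\le i$. Hence either both satisfy $\tau\le i$, or both satisfy $i<\tau$, where the value $\tau=\infty$ falls into the second case. This is the same mechanism as in the proof of Lemma \ref{constantCharactFunctions}, applied with the deterministic time $i$ in place of $\kappa$.

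With the branch fixed, the two cases are immediate. If $i<\tau$ for both paths, then $H^\tau_i(\tilde{S})=H^1_i(\tilde{S})=H^1_i(\hat{S})=H^\tau_i(\hat{S})$ by non-anticipativity of $H^1$. If $\tau\le i$ for both paths, the identical argument applies with $H^2$. The indices $i<j$, if one regards the sequence as indexed from $0$, are irrelevant, since the functions are only defined on $\Se_{(S,j)}$ for $i\ge j$.

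The only real subtlety is the symmetric use of the stopping-time property in the key step. In particular, the stopping-time definition must be applied at the smaller of the two $\tau$-values, which is at most $i$. It must not be applied at a possibly infinite value, where agreement of the paths is not available.
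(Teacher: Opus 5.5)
Your proof is correct and complete. The paper gives no proof of this lemma (it is imported from \cite[Lemma 5.4]{bender3}). Your key step, that $\{\tau\le i\}$ is determined by $S_0,\dots,S_i$, is exactly Lemma \ref{constantCharactFunctions} applied with the deterministic stopping time $\kappa\equiv i$, and you handle the value $\tau=\infty$ and the irrelevant indices $i<j$ correctly.
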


\vspace{.1in}
\begin{theorem} [Tower Property I] \label{mainDirectionOfTowerProperty s-t}
Let  $S$ be an arbitrary element of $\mathcal{S}$ and  $\rho \le \tau$ be two stopping times; also let $f \in Q$. Then, 
\begin{equation}  \label{tower_ineq_0 s-tGeneralVersion}
\overline{\sigma}_{\rho}( \overline{\sigma}_{\tau}~f)(S) = \overline{\sigma}_{\rho}[{\bf 1}_{\{\tau = \infty\}}~f+{\bf 1}_{\{\tau < \infty\}}~\overline{\sigma}_{\tau} f](S)\le \overline{\sigma}_{\rho} f(S).
\end{equation}
and, consequently,
\begin{equation} \nonumber  
 \underline{\sigma}_{\rho} f(S) \leq \underline{\sigma}_{\rho}[{\bf 1}_{\{\tau = \infty\}}~f+{\bf 1}_{\{\tau < \infty\}}~\underline{\sigma}_{\tau} f](S) = \underline{\sigma}_{\rho}( \underline{\sigma}_{\tau}~f)(S).
\end{equation}
\end{theorem}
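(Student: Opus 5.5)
The proof will be a direct construction. I take an arbitrary superhedge of $f$ at the node $(S,\rho(S))$ and \emph{stop} each of its portfolios at $\tau$. This produces a superhedge of $g\equiv{\bf 1}_{\{\tau=\infty\}}f+{\bf 1}_{\{\tau<\infty\}}\overline{\sigma}_\tau f$ at $(S,\rho(S))$ with the same initial capitals, and taking the infimum then gives $\overline{\sigma}_\rho g(S)\le\overline{\sigma}_\rho f(S)$.

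\emph{Preliminary reductions.} The first equality in (\ref{tower_ineq_0 s-tGeneralVersion}) is just the convention (\ref{atTheEndOfTime}): on $\{\tau=\infty\}$ we have $\overline{\sigma}_{\tau}f(S')=f(S')$. If $\rho(S)=\infty$, then $\tau(S)=\infty$ as well and both sides equal $f(S)$. So assume $j\equiv\rho(S)<\infty$. We may also assume $\overline{\sigma}_\rho f(S)<\infty$, otherwise there is nothing to prove.

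\emph{Setting up the stopped superhedge.} Fix $\epsilon>0$ and a representation $f\le\sum_{m\ge0}f_m$ on $\Se_{(S,j)}$ with $\sum_m V^m\le\overline{\sigma}_j f(S)+\epsilon$. Here $f_0=\Pi^{V^0,H^0}_{j,n_0}$, and $f_m=\liminf_n\Pi^{V^m,H^m}_{j,n}$ with all $\Pi^{V^m,H^m}_{j,n}\ge0$ for $m\ge1$. Define the stopped strategies $\hat H^m$ through Lemma \ref{st-portfolio}, taking $H^1=H^m$ and $H^2\equiv0$; they are non-anticipative. Then $\Pi^{V^m,\hat H^m}_{j,n}=\Pi^{V^m,H^m}_{j,n\wedge\tau}$. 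Consequently:
\begin{itemize}
\item for $m\ge1$ these stopped wealths are still non-negative for all $n$;
\item $\hat f_0=\Pi^{V^0,\hat H^0}_{j,n_0}$ is still elementary;
\item $\hat f_m\equiv\liminf_n\Pi^{V^m,\hat H^m}_{j,n}$ equals $\Pi^{V^m,H^m}_{j,\tau}$ on $\{\tau<\infty\}$ and equals $f_m$ on $\{\tau=\infty\}$.
\end{itemize}
On $\{\tau=\infty\}$ this immediately gives $g=f\le\sum_m f_m=\sum_m\hat f_m$.

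\emph{The key step: the region $\{\tau<\infty\}$.} Fix $S'\in\Se_{(S,j)}$ with $k\equiv\tau(S')<\infty$, so $k\ge j$ and $\Se_{(S',k)}\subseteq\{\tau=k\}$. Restrict the original portfolios to $\Se_{(S',k)}$ and restart them at time $k$. The initial capital is $W^m\equiv\Pi^{V^m,H^m}_{j,k}(S')$, which is constant on the node, and the strategy is $(H^m_i)_{i\ge k}$. For $m\ge1$ this gives $\Pi^{W^m,H^m}_{k,n}=\Pi^{V^m,H^m}_{j,n}\ge0$ for $n\ge k$, so these are admissible terms in the definition of $\overline{\sigma}_k$. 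For $m=0$ the restarted wealth $\Pi^{W^0,H^0}_{k,n_0\vee k}$ is elementary in $\mathcal{E}_{(S',k)}$; if $k\ge n_0$ it is simply the constant $W^0$ (with $H^0\equiv0$ after $n_0$). Since the liminf's are unchanged, $f\le\sum_m f_m$ on $\Se_{(S',k)}$ with these restarted portfolios. Hence
\[
\overline{\sigma}_k f(S')\le\sum_m W^m=\sum_m\hat f_m(S').
\]
Thus $g\le\sum_m\hat f_m$ on all of $\Se_{(S,j)}$, so $\overline{\sigma}_j g(S)\le\sum_m V^m\le\overline{\sigma}_jf(S)+\epsilon$, and letting $\epsilon\to0$ proves (\ref{tower_ineq_0 s-tGeneralVersion}).

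\emph{The inner version.} This follows by applying (\ref{tower_ineq_0 s-tGeneralVersion}) to $-f$. We have $-\overline{\sigma}_\tau(-f)=\underline{\sigma}_\tau f$ on $\{\tau<\infty\}$, and $-(-f)=f$ on $\{\tau=\infty\}$. Negating the inequality turns $\overline{\sigma}_\rho$ into $\underline{\sigma}_\rho$, which yields the stated chain.

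\emph{Main obstacle.} I expect the main obstacle to be the bookkeeping in the key step: checking that restarting at the random node $(S',\tau(S'))$ keeps every positive portfolio admissible, and handling the elementary term $f_0$ when $\tau(S')>n_0$. One also has to verify that the extended-real conventions ($\infty+(-\infty)=\infty$) do not break the inequalities when some $f_m$ or $f$ takes infinite values. Positivity of the $\Pi^{V^m,H^m}_{j,n}$ for $m\ge1$ is what makes the partial sums and liminf's behave monotonically here.
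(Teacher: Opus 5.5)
Your proposal is correct and follows essentially the same route as the paper. Your stopped strategies $\hat H^m$ are the paper's $G^m$ (built via Lemma~\ref{st-portfolio}), and your restarted capitals $W^m=\Pi^{V^m,H^m}_{j,\tau(S')}(S')$ are the paper's $U^m(S')$, used in exactly the same way to get $\overline{\sigma}_{\tau(S')}f(S')\le\sum_m U^m(S')$ and then a superhedge of ${\bf 1}_{\{\tau=\infty\}}f+{\bf 1}_{\{\tau<\infty\}}\overline{\sigma}_\tau f$ on $\Se_{(S,j)}$ with the same initial capitals.

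One minor fix: $(L)$ is not assumed here, so $\overline{\sigma}_j f(S)=-\infty$ can occur, and then an $\epsilon$-optimal representation does not exist. Instead, work with an arbitrary representation and take the infimum over all of them at the end, as the paper does.
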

\begin{proof}
To establish (\ref{tower_ineq_0 s-tGeneralVersion}), we argue as follows:
\begin{equation} \nonumber
\overline{\sigma}_{\tau}  f= {\bf 1}_{\{\tau= \infty\}}~\overline{\sigma}_{\tau}(f)
+ {\bf 1}_{\{\tau <\infty\}}~\overline{\sigma}_{\tau}(f) = 
{\bf 1}_{\{\tau= \infty\}}~\overline{\sigma}_{\infty} f
+ {\bf 1}_{\{\tau <\infty\}}~ \overline{\sigma}_{\tau} f
= {\bf 1}_{\{\tau= \infty\}}~f
+ {\bf 1}_{\{\tau <\infty\}}~ \overline{\sigma}_{\tau} f,
\end{equation}
The first equality in (\ref{tower_ineq_0 s-tGeneralVersion}) is then obvious.  We now concentrate in establishing the second inequality in (\ref{tower_ineq_0 s-tGeneralVersion}).

Consider first the case of $\rho(S) = \infty$ then  the right hand side of (\ref{tower_ineq_0 s-tGeneralVersion}) equals $f(S)$ while:
${\bf 1}_{\{\tau = \infty\}}(S)~f(S)+{\bf 1}_{\{\tau < \infty\}}(S)~\overline{\sigma}_{\tau} f(S)= f(S)$ where we used $ \infty =\rho(S) \leq \tau(S)$.

Consider now $\rho(S) < \infty$; we may assume without loss of generality that $\overline{\sigma}_{\rho}f(S) < \infty$.
Since $\overline{\sigma}_{\rho} g(S)= \overline{\sigma}_{\rho(S)} g(S)$ for any $g \in Q$ it should not lead to confusion to set $j=\rho(S)$, we will then be conditioning on the node $(S,j)= (S, \rho(S))$.

We will use the notation $f_m=\liminf\limits_{n\to \infty}~~\Pi^{V^m, H^m}_{j,n}$, $m \geq 0$, with $H_i^0\equiv 0$ for $i\ge n_0$,  and assume
\begin{equation}  \label{firstNodewiseBound}
f\le \sum\limits_{m\ge 0} f_m~~\mbox{on}~~\Se_{(S,j)},~
\Pi^{V^0, H^0}_{j,n}\in \mathcal{E}_{(S,j)}\;\; \;~\mbox{and for}~~ m\ge 1,\;\; \Pi^{V^m, H^m}_{j,n}\in \mathcal{E}^+_{(S,j)},
\end{equation}
for any  $n\geq j$. We note that  that $(H^m_i|_{\Se_{(S,k)}})_{i\ge k}\in\He_{(S,k)}$for any $S\in\Se$ (see Definition 2.3 in \cite{bender3}). We let
\begin{equation}  \nonumber 
\tilde{\mathcal{S}}_{(S,j)} = \mathcal{S}_{(S,j)} \setminus \{\tilde{S} \in \mathcal{S}_{(S,j)}:~~\tau(\tilde{S}) = \infty \}
\end{equation}
and  define the functions $U^m$ on $\mathcal{S}_{(S,j)}$, by
\begin{equation} \nonumber
U^m(\tilde{S})\equiv \Pi^{V^m, H^m}_{j,\tau(\tilde{S})}(\tilde{S}),\;\;\tilde{S}\in\tilde{\mathcal{S}}_{(S,j)}, ~\mbox{and}~~U^m(\tilde{S})= f_m~\mbox{when}~~\tau(\tilde{S}) = \infty,
\end{equation}
these functions are constant on $\Se_{(\tilde{S},\tau(\tilde{S}))}$ whenever $\tau(\tilde{S}) < \infty$: $S_i'=\tilde{S}_i,\; 0\le i\le \tau(\tilde{S})\Rightarrow \tau(S')=\tau(\tilde{S})$. Also note that $\tau(\tilde{S}) \geq \rho(\tilde{S}) = \rho(S)=j$ where we used the stopping time property of $\rho$.  Moreover $0\le i < \tau(\tilde{S})\Rightarrow H^m_i(S')=H^m_i(\tilde{S})\;\&\; \Delta_iS'=\Delta_i\tilde{S}$. From our definition in (\ref{atTheEndOfTime}), $\mathcal{S}_{(\tilde{S}, \infty)} \equiv \{\tilde{S}\}$  so $U^m$ is also constant in that space which corresponds to
$\tau(\tilde{S}) = \infty$. It follows that, for each $\tilde{S}\in \tilde{\Se}_{(S,j)}$,  the functions $f_m=\liminf\limits_{n\to \infty}~~\Pi^{U^m(\tilde{S}), H^m}_{\tau(\tilde{S}),n}$, for  $m \geq 0$,  are defined on $\Se_{(\tilde{S},\tau(\tilde{S}))}$ and satisfy,
$\Pi^{U^0(\tilde{S}), H^0}_{\tau(\tilde{S}),n}\in \mathcal{E}_{(\tilde{S},\tau(\tilde{S}))}$ and for $m\ge 1,\; \Pi^{U^m(\tilde{S}), H^m}_{\tau(\tilde{S}),n}\in \mathcal{E}^+_{(\tilde{S},\tau(\tilde{S}))}$. To see this we compute for $S'\in\Se_{(\tilde{S},\tau(\tilde{S}))}$, $\Pi^{U^m(\tilde{S}), H^m}_{\tau(\tilde{S}),n}(S')=U^m(\tilde{S})+ \sum\limits_{i=\tau(\tilde{S})}^{n-1}H^m_i(S')\Delta_iS' =V^m+\sum\limits_{i=j}^{\tau(\tilde{S})-1}H^m_i(S')\Delta_iS'+\sum\limits_{i=\tau(\tilde{S})}^{n-1}H^m_i(S')\Delta_iS'=\Pi^{V^m, H^m}_{j,n}(S')$  and then we rely on (\ref{firstNodewiseBound}). We have used $j=\rho(S)=\rho(\tilde{S})\le \tau(\tilde{S})$.

Using (\ref{firstNodewiseBound}) and the above definition of $U^m$ we have
\begin{equation} \nonumber 
f \leq \sum_{m \geq 0} \liminf_{n \rightarrow \infty}~ \Pi^{U^m, H^m}_{\tau(\tilde{S}),n}~~\mbox{on}~\mathcal{S}_{(\tilde{S}, \tau(\tilde{S}))}~~\mbox{for all}~\tilde{S}~\in \tilde{\mathcal{S}}_{(S,j)},
\end{equation}
therefore
\begin{equation} \label{mainInequality}
\overline{\sigma}_{\tau(\tilde{S})} f (\tilde{S}) \le \sum_{m\ge 0} U^m(\tilde{S}),~~\mbox{for all}~\tilde{S}~\in \tilde{\mathcal{S}}_{(S,j)}.
\end{equation}
\indent On $\mathcal{S}_{(S,j)}$ define the portfolios
\[
G^m_i(\tilde{S})=\left\{
\begin{array}{lll}
H^m_i(\tilde{S}) & \mbox{if} & j\le i < \tau(\tilde{S})\\
0 & \mbox{if} & \tau(\tilde{S})\le i
\end{array}
\right. \quad m\ge 0,
\]
which are non-anticipative on $\mathcal{S}_{(S,j)}$  by Lemma \ref{st-portfolio} with $H^1=H^m, H^2=0$, and $H^{\tau}_i=G^m_i$.

It then follows that $U^m(\tilde{S})=\Pi^{V^m, H^m}_{j,\tau(\tilde{S})}(\tilde{S})=\liminf\limits_{n\to \infty}~~\Pi^{V^m, G^m}_{j,n}(\tilde{S})$ whenever $\tilde{S} \in \tilde{\mathcal{S}}_{(S,j)}$ and $m \geq 0$. Moreover, from our definition of $U^m$ we also have
$U^m(\tilde{S})=\liminf\limits_{n\to \infty}~~\Pi^{V^m, G^m}_{j,n}(\tilde{S})$, $m \geq 0$, whenever $\tau(\tilde{S})= \infty$.
Also, from (\ref{firstNodewiseBound}) we get
\[
\Pi^{V^0, G^0}_{j,n}\in \mathcal{E}_{(S,j)},\;\;\mbox{and}\;\;\; \Pi^{V^m, G^m}_{j,n}\in \mathcal{E}^+_{(S,j)}, m \geq 1.
\]
Therefore
\begin{equation}  \label{hereWeUseTheCharacteristicFunction}
{\bf 1}_{\{\tau = \infty\}}(\tilde{S})~ ~f(\tilde{S})+ {\bf 1}_{\{\tau < \infty\}}(\tilde{S})~ \overline{\sigma}_{\tau(\tilde{S})} f(\tilde{S}) \leq
\sum_{m \geq 0} U^m(\tilde{S}) = \sum_{m \geq 0} \liminf\limits_{n\to \infty}~~\Pi^{V^m, G^m}_{j,n}(\tilde{S})~~\mbox{for all}~\tilde{S}~\in~\mathcal{S}_{(S,j)};
\end{equation}
to check this inequality notice that whenever ${\bf 1}_{\{\tau < \infty\}}(\tilde{S})=1$ it follows from (\ref{mainInequality}) and whenever ${\bf 1}_{\{\tau = \infty\}}(\tilde{S})=1$ we note that
the right hand side of (\ref{hereWeUseTheCharacteristicFunction}) equals $\sum_{m \geq 0} f_m$ which from (\ref{firstNodewiseBound}) is greater or equal than $f$.
From (\ref{hereWeUseTheCharacteristicFunction}), and previous remarks,
it follows that
 $\overline{\sigma}_j[{\bf 1}_{\{\tau = \infty\}}~ ~f + {\bf 1}_{\{\tau < \infty\}}~\overline{\sigma}_{\tau}f](S) \le \sum\limits_{m\ge 0} V^m(S)$ and so
\[  \label{tower_ineq_1}
\overline{\sigma}_{\rho}[{\bf 1}_{\{\tau = \infty\}}~ ~f + {\bf 1}_{\{\tau < \infty\}}~ \overline{\sigma}_{\tau} f](S) \le\overline{\sigma}_{\rho} f(S).
\]

\end{proof}

The proof of the next corollary follows the same steps as the proof of Theorem~\ref{mainDirectionOfTowerProperty s-t}. 
\begin{corollary}[{\bf Tower properties for $\overline I$}]
\label{towerPropertyForI-bar}
A version of Theorem~\ref{mainDirectionOfTowerProperty s-t}  holds under the same assumptions if one replaces $\overline{\sigma}$ by $\overline{I}$ and assumes in addition that $f \in P$.
\end{corollary}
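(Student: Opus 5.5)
The plan is to repeat the argument of Theorem~\ref{mainDirectionOfTowerProperty s-t} essentially verbatim, with $\overline{\sigma}$ replaced by $\overline{I}$ and $f\in P$. We must also record why the replacement stays inside $P$. The statement to prove is
\[
\overline{I}_{\rho}\bigl[{\bf 1}_{\{\tau=\infty\}}f+{\bf 1}_{\{\tau<\infty\}}\overline{I}_{\tau}f\bigr](S)\le \overline{I}_{\rho}f(S),\qquad f\in P,
\]
with the convention $\overline{I}_\infty f(S)=f(S)$, analogous to (\ref{atTheEndOfTime}). The inner function ${\bf 1}_{\{\tau=\infty\}}f+{\bf 1}_{\{\tau<\infty\}}\overline{I}_\tau f$ is nonnegative, since $\overline{I}_j\ge 0$ and $f\ge 0$. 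Hence it lies in $P$ and the left side is well defined. The first equality, $\overline{I}_\tau f={\bf 1}_{\{\tau=\infty\}}f+{\bf 1}_{\{\tau<\infty\}}\overline{I}_\tau f$, is the same pointwise decomposition as before.

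The case $\rho(S)=\infty$ is trivial, exactly as in the original proof. For $\rho(S)=j<\infty$, assume $\overline{I}_jf(S)<\infty$ and fix an admissible cover $f\le\sum_{m\ge1}f_m$ on $\Se_{(S,j)}$, where $f_m=\liminf_n\Pi^{V^m,H^m}_{j,n}$ and every $\Pi^{V^m,H^m}_{j,n}\in\mathcal{E}^+_{(S,j)}$. There is no $m=0$ term now, which only simplifies matters. Define $U^m(\tilde S)=\Pi^{V^m,H^m}_{j,\tau(\tilde S)}(\tilde S)$ when $\tau(\tilde S)<\infty$, and $U^m(\tilde S)=f_m(\tilde S)$ otherwise.

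The key check is that at each node $(\tilde S,\tau(\tilde S))$ with $\tau(\tilde S)<\infty$, the restarted portfolios $\Pi^{U^m(\tilde S),H^m}_{\tau(\tilde S),n}$ are nonnegative for all $n\ge\tau(\tilde S)$. This holds because, by the telescoping computation in the proof of Theorem~\ref{mainDirectionOfTowerProperty s-t}, they coincide with $\Pi^{V^m,H^m}_{j,n}\ge0$ on $\Se_{(\tilde S,\tau(\tilde S))}$. They therefore form an admissible $\overline{I}_{\tau(\tilde S)}$-cover of $f$, giving $\overline{I}_{\tau(\tilde S)}f(\tilde S)\le\sum_{m\ge1}U^m(\tilde S)$.

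Next, build the stopped portfolios $G^m$ via Lemma~\ref{st-portfolio}, with $H^1=H^m$ and $H^2=0$. Then $\liminf_n\Pi^{V^m,G^m}_{j,n}=U^m$ on $\Se_{(S,j)}$, and each $\Pi^{V^m,G^m}_{j,n}$ is nonnegative. Indeed, for $n\le\tau(\tilde S)$ it equals $\Pi^{V^m,H^m}_{j,n}(\tilde S)\ge0$, and for $n>\tau$ it is frozen at a value of that kind. So the $G^m$ yield an admissible cover, with the same initial values $V^m$, of ${\bf 1}_{\{\tau=\infty\}}f+{\bf 1}_{\{\tau<\infty\}}\overline{I}_\tau f$ on $\Se_{(S,j)}$. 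It follows that $\overline{I}_j[\cdots](S)\le\sum_m V^m$, and taking the infimum over covers gives the claim.

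The inner-operator version in Theorem~\ref{mainDirectionOfTowerProperty s-t} has no $\overline{I}$-analogue, since $\overline{I}$ is only defined on $P$. The corollary is therefore to be read as the first display only. The one step needing genuine care, rather than transcription, is the positivity of the stopped and restarted portfolios, which is what admissibility for $\overline{I}$ requires. As argued above, this follows directly because stopping and restarting only ever sample values of the original nonnegative wealth processes.
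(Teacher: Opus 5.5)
Your proposal is correct and takes the paper's approach. The paper's proof only says that the argument of Theorem~\ref{mainDirectionOfTowerProperty s-t} carries over. You make explicit the one point that actually needs checking: the restarted portfolios at $(\tilde S,\tau(\tilde S))$ and the stopped portfolios $G^m$ stay nonnegative for every $n$, so they are admissible $\overline{I}$-covers. Your reading of the corollary as the first display only, with the convention $\overline{I}_\infty f=f$, also matches how the paper uses it later, e.g.\ in Corollary~\ref{a.eAndStoppingTimes}.
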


\begin{corollary}
\label{a.eAndStoppingTimes}
Assume, $f, g \in Q$, $f= g$ $a.e.$ and that $\rho$ is a stopping time. Then
\begin{equation} \nonumber
\overline{\sigma}_{\rho} f = \overline{\sigma}_{\rho} g ~~~a.e.  
\end{equation}
\end{corollary}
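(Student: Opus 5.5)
Since $f=g$ a.e., the set $N=\{f\neq g\}$ is an $\overline{I}$-null set, $\overline{I}\,\mathbf{1}_N=0$. Setting $h=\infty\cdot\mathbf{1}_N$ (with the convention $0\cdot\infty=0$), we have $f\le g+h$ and $g\le f+h$ pointwise on $\Se$ (using $\infty+(-\infty)=\infty$). The plan is to prove that $\overline{\sigma}_{\rho}f\le\overline{\sigma}_{\rho}g$ a.e.; the reverse inequality follows by symmetry. Two facts are needed: subadditivity and monotonicity of $\overline{\sigma}_j$ at each node, which give $\overline{\sigma}_{\rho}f(S)\le\overline{\sigma}_{\rho}g(S)+\overline{\sigma}_{\rho}h(S)$ at every $S$, and the statement that the exceptional set
$$M=\{S:\ \overline{\sigma}_{\rho}h(S)>0\}$$
is $\overline{I}$-null. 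On $\Se\setminus M$ we then get the desired inequality, with the convention issue $(+\infty)+(-\infty)$ harmless because $\overline{\sigma}_\rho h(S)\le 0$ there.

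To show $M$ is null, we first note that $\overline{\sigma}_j h(S)\le \overline{I}_j h(S)$ for $h\in P$: any $\overline{I}_j$-cover is a $\overline{\sigma}_j$-cover with $f_0=0$. Next, $\overline{I}_j h$ is either $0$ or $\infty$, since $h=\infty\mathbf 1_N$ and $\overline{I}_j$ is positively homogeneous; more precisely, $\overline{I}_j h(S)>0$ iff $\overline{I}_j\mathbf{1}_N(S)>0$, using countable subadditivity $\overline{I}_j(\infty\mathbf 1_N)\le\sum_k \overline{I}_j(\mathbf 1_N)$. So $M\subseteq\{S:\overline{I}_{\rho}\mathbf{1}_N(S)>0\}=\bigcup_{k}\{\overline{I}_{\rho}\mathbf 1_N>1/k\}$. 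Now we apply Corollary~\ref{towerPropertyForI-bar}, the tower property for $\overline I$ with $0\le\rho$: $\overline{I}(\mathbf 1_{\{\rho<\infty\}}\overline{I}_{\rho}\mathbf 1_N+\mathbf 1_{\{\rho=\infty\}}\mathbf 1_N)\le\overline{I}\mathbf 1_N=0$. Hence, with $A_k=\{\overline I_\rho\mathbf 1_N>1/k\}$, we get $\overline I(\mathbf 1_{A_k\cap\{\rho<\infty\}})\le k\,\overline I(\overline I_\rho\mathbf 1_N\,\mathbf 1_{\{\rho<\infty\}})=0$ by monotonicity and homogeneity. On $\{\rho=\infty\}$, $\overline{I}_\infty\mathbf 1_N=\mathbf 1_N$, so that part of $M$ lies in $N$. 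Countable subadditivity of $\overline I$ (\cite{bender3}, Prop.~2.8) then makes $M$ null.

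The main obstacle is the bookkeeping of the $\pm\infty$ conventions in the subadditivity step, namely $\overline{\sigma}_\rho(g+h)\le\overline{\sigma}_\rho g+\overline{\sigma}_\rho h$ when $g$ takes value $-\infty$ on $N$. I would handle this by concatenating near-optimal covers directly: if $g\le\sum f_m$ and $h\le\sum f'_m$ on the node, then $f\le g+h\le\sum f_m+\sum f'_m$, with the nonnegative $f'_m$ appended as extra positive terms. This yields $\overline{\sigma}_\rho f(S)\le \overline{\sigma}_\rho g(S)+\overline I_\rho h(S)$ whenever $\overline{\sigma}_\rho g(S)<\infty$. If $\overline{\sigma}_\rho g(S)=\infty$ there is nothing to prove, and if it equals $-\infty$ the same construction gives $-\infty$ for $f$ off $M$.
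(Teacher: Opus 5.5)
Your argument is correct and follows the paper's route. The tower inequality for $\overline{I}$ (Corollary~\ref{towerPropertyForI-bar} with $0\le\rho$) makes $N=\{f\neq g\}$ conditionally null at $(S,\rho(S))$ for a.e.\ $S$, and a nodewise step then gives $\overline{\sigma}_\rho f(S)=\overline{\sigma}_\rho g(S)$ off a null set. The paper cites Proposition A.2.1(b) of \cite{bender3} for that nodewise step, whereas you prove it directly by appending an $\overline{I}_\rho$-cover of $\infty\mathbf{1}_N$. Your use of $\mathbf{1}_{\{f\neq g\}}$ in place of the paper's $|f-g|$ is also slightly more careful: under the convention $\infty+(-\infty)=\infty$ one has $|f-g|=\infty$ on $\{f=g=\pm\infty\}$.
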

\begin{proof}
From Corollary \ref{towerPropertyForI-bar}: $0 \leq \overline{I}(\overline{I}_{\rho}(|f-g|) \leq\overline{I}(|f-g|)=0$; therefore
\begin{equation} \nonumber 
\overline{I}_{\rho}(|f-g|)=0 ~a.e.
\end{equation} 
The above equation means that there exists $N \subseteq \mathcal{S}$ such that $\overline{I}({\bf 1}_N)=0$ and $\overline{I}_{\rho(S)}(|f-g|)(S)= 0 $ for all $ S \in N^C \equiv \mathcal{S} \setminus N$. Therefore for all $S\in N^C$ we have
\begin{equation} \nonumber
 f =g~\mbox{holds}~ \overline{I}_{(S, \rho(S))}-a.e. ~\mbox{on}~\mathcal{S}_{(S, \rho(S))}~(\mbox{i.e., the equality holds a.e. in } \mathcal{S}_{(S, \rho(S))}).
 \end{equation}
 It then follows from Proposition A.2.1, item $b)$, from \cite{bender3} that $\overline{\sigma}_{\rho(S)}f (S)=
 \overline{\sigma}_{\rho(S)}g (S)$ for all $S \in  N^C$. 
\end{proof}

\section{Doob's Optional Stopping} \label{optionalStopping}
Here we go through a series of results leading
to our versions of  Doob's optional sampling theorem. We will organize the results in an analogous way to how classical (i.e. probability-theory based) results are handled. First we refer to some general facts.

\begin{definition}[Stopped Sequence] \label{stoppedSequence}
Given a sequence $\{f_j\}_{j \geq 0},~f_j \in Q$  and a stopping time $\tau$, we define the stopped sequence $\{f_{ j}^{\tau}\}_{j \geq 0 }$ by
\begin{equation} \nonumber
f_j^{\tau}(S):=f_{\tau(S)\wedge j}(S).
\end{equation}
Furthermore, if $\tau< \infty$ a.e. then  $f_{\tau}(S):=
f_{\tau(S)}(S)$ is defined a.e. 
\end{definition}
    
\begin{remark} \label{easyStoppingResults}
Given a supermartingale sequence $\{f_j\}$ the stopped sequence $\{f^{\tau}_j\}$ is a supermartingale (as per \cite[Example 3, item d)]{bender3}). As supermartingales and submartingales are related by the duality: $\overline{\sigma}_j ~\rightarrow \underline{\sigma}_j$, $f \rightarrow -f$ it follows that a stopped submartingale is also a submartingale. It then follows that 
if $(L)-a.e.$  is assumed (which by \cite[Proposition 3.3]{bender3} will imply $\underline{\sigma}_jf \leq \overline{\sigma}_jf~~a.e.$
for all $f \in Q$) we have that a stopped martingale is a martingale as well.
\end{remark}

\subsection{Supermartingales of the form $f_j = \overline{\sigma}_j f$}

For a fixed $f \in Q$, the sequence $f_j = \overline{\sigma}_j f$,$ j \geq 0$ is a supermartingale sequence with $\overline{\sigma}_j f_{j+1} \leq f_j$ holding everywhere on $\mathcal{S}$ (as per \cite[Example 3, item b)]{bender3}).  \cite[Corollary 5.6]{bender3} gives general conditions for $\{\overline{\sigma}_j f\}$ being a martingale sequence.
%

This subsection involves the special case of a supermartingale  $f_j\equiv \overline{\sigma}_j f$, later versions of Doob's optional sampling  generalize the result to more general supermartingale sequences (but relying on stronger hypothesis). This restriction allows us to impose no conditions on $\mathcal{S}$ nor do we need $f$ to be non-negative.




\begin{theorem}[Doob's optional stopping version I, supermartingale $\{f_j \equiv \overline{\sigma}_j f\}$] \label{doobIIWithTwoStoppingTimes} \hspace{.1in}

\noindent
Let $f \in Q$, $\rho \leq \tau$ be two stopping times and set $f_j = \overline{\sigma}_j f$, $j \geq 0$. Then
\begin{enumerate}
\item
\begin{equation} \nonumber
\overline{\sigma}_{\rho} f^{\tau}_k \leq f^{\rho}_k\;\;\mbox{for all}\;\; k \geq 0.
\end{equation}

\item 
\begin{equation} \nonumber
\overline{\sigma}_{\rho} f_{\tau} \leq f_{\rho}~\mbox{on}~\mathcal{S}.
\end{equation}
\end{enumerate}
\end{theorem}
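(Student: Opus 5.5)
Both items reduce to the Tower Property I (Theorem \ref{mainDirectionOfTowerProperty s-t}) applied with suitably chosen stopping times, after rewriting the stopped values of $f_j=\overline{\sigma}_j f$ as conditional outer integrals at a stopping time. The key identity is
\[
f^{\tau}_k(S)=f_{\tau(S)\wedge k}(S)=\overline{\sigma}_{\tau(S)\wedge k} f(S)=\overline{\sigma}_{\tau\wedge k} f(S),
\]
where $\tau\wedge k$ is again a stopping time. Indeed, if $S_i=S'_i$ for $0\le i\le \tau(S)\wedge k$, then either $\tau(S)\le k$, in which case $\tau(S')=\tau(S)$, or $\tau(S)>k$, in which case $\tau(S')>k$. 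Otherwise $\tau(S')\le k$ and the stopping property applied at $S'$ gives $\tau(S)=\tau(S')$, a contradiction. Likewise $f^{\rho}_k=\overline{\sigma}_{\rho\wedge k} f$. Since $\tau\wedge k$ is finite, there is no $\{\tau=\infty\}$ term to worry about here.

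\textbf{Item 1.} The plan is to prove the intermediate inequality $\overline{\sigma}_{\rho} f^\tau_k\le \overline{\sigma}_{\rho\wedge k}f$ by splitting nodes according to whether $\rho(S)\le k$ or $\rho(S)>k$.

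By Lemma \ref{constantCharactFunctions}, ${\bf 1}_{\{\rho\le k\}}$ is constant on $\mathcal{S}_{(S,\rho(S))}$, so the two cases are well defined at the node.

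On $\{\rho\le k\}$ we have $\rho\le \tau\wedge k$ on the whole node $\mathcal{S}_{(S,\rho(S))}$. To obtain a pair of stopping times ordered everywhere, I would replace $\rho$ by $\rho'=\rho\wedge(\tau\wedge k)$. This is a stopping time, it satisfies $\rho'\le\tau\wedge k$, and it coincides with $\rho$ on this node. Since $\overline{\sigma}_{\rho}g(S)$ depends only on $\rho(S)$ and the node, Theorem \ref{mainDirectionOfTowerProperty s-t} gives
\[
\overline{\sigma}_{\rho}(\overline{\sigma}_{\tau\wedge k} f)(S)\le \overline{\sigma}_{\rho} f(S)=f^{\rho}_k(S).
\]

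On $\{\rho>k\}$ we have $\tau\ge\rho>k$ on the node, so $f^\tau_k=f_k$ there. The function $f_k$ is constant on $\mathcal{S}_{(S,k)}\supseteq\mathcal{S}_{(S,\rho(S))}$, hence constant on the node. Superhedging a constant $c$ by the elementary function $\Pi^{c,0}_{j,n}$ gives $\overline{\sigma}_\rho(f_k)(S)\le f_k(S)=f^\rho_k(S)$. No property $(L)$ is needed, since only the upper bound is used.

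\textbf{Item 2.} I read $f_\tau$ on $\{\tau=\infty\}$ as $\overline{\sigma}_\infty f=f$, following the convention (\ref{atTheEndOfTime}). Then $f_\tau={\bf 1}_{\{\tau=\infty\}}f+{\bf 1}_{\{\tau<\infty\}}\overline{\sigma}_\tau f$, and the first equality together with the inequality in Theorem \ref{mainDirectionOfTowerProperty s-t} give directly $\overline{\sigma}_\rho f_\tau\le\overline{\sigma}_\rho f=f_\rho$ everywhere.

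\textbf{Main obstacle.} The delicate step is item 1. One must handle the fact that $\rho\le\tau\wedge k$ fails globally, which is the reason for the case split and the replacement of $\rho$ by $\rho'=\rho\wedge\tau\wedge k$. One must also verify carefully that $\overline{\sigma}_\rho$ at a node depends only on $\rho(S)$, so that substituting $\rho'$ for $\rho$ on $\{\rho\le k\}$ is legitimate.
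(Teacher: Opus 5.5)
Your proposal is correct and follows the paper's proof essentially step by step. It uses the same case split on $\rho(S)\le k$ versus $\rho(S)>k$, Tower Property I with $\tau\wedge k$ in place of $\tau$ in the first case, constancy of $f_k$ on $\mathcal{S}_{(S,\rho(S))}$ in the second, and a direct application of Tower Property I for item 2. Your replacement of $\rho$ by $\rho'=\rho\wedge\tau\wedge k$ only makes explicit a point the paper leaves implicit: the tower inequality needs $\rho\le\tau\wedge k$ only on the node $\mathcal{S}_{(S,\rho(S))}$, not globally.
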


\begin{proof}
Since $f^{\tau}_k=f_{\tau\wedge k}=\overline{\sigma}_{\tau\wedge k}f$, in case that $\rho(S) \leq k$,  the first statement follows from (\ref{tower_ineq_0 s-tGeneralVersion}) in Theorem \ref{mainDirectionOfTowerProperty s-t}, with $\tau\wedge k$ in place of $\tau$.
In case that
$ k < \rho(S)$ we note that if $\hat{S} \in \mathcal{S}_{(S, \rho(S))}$ we have
$ k < \rho(S)= \rho(\hat{S}) \leq \tau(\hat{S})$ and so $f^{\tau}_{k}(\hat{S})
= f_{k}(\hat{S})$. Therefore $\overline{\sigma}_{\rho}f^{\tau}_{k}(S) = \overline{\sigma}_{\rho}(f_{k})(S) \leq f_k(S) = f^{\rho}_k(S)$ where the inequality holds as
$f_k$ is constant on $\mathcal{S}_{(S, \rho(S))}$ .

\noindent
For the second statement, we apply \eqref{tower_ineq_0 s-tGeneralVersion} to conclude that
$$
\overline{\sigma}_\rho f_\tau= \overline{\sigma}_\rho \overline{\sigma}_\tau f \leq \overline{\sigma}_\rho  f=f_\rho
$$
valid on $\Se$.

\end{proof}

\subsection{General Supermartingale $\{f_j\}$ and bounded stopping times $\rho \leq \tau$}

The next result generalizes, under appropriate conditions, the previous theorem from supermartingales of the form
$f_j = \overline{\sigma}_j f$ to general supermartingale sequences $f_j$.
Simple trajectory based conditions implying that $(L)-a.e.$ holds (a property required in the following theorem) are presented in \cite{bender3}[corollaries 3.13 and  3.14].

\begin{theorem}[Doob's optional stopping version II, general supermartingale] \label{doobIIIWithTwoStoppingTimes general}
Let $\{f_j\}_{j \geq 0}$ be a given  real-valued supermartingale on $\mathcal{S}$. Consider two stopping times satisfying  
$\rho\le \tau$ a.e. and assume that $(L)-a.e$ holds. Then, if $\tau \leq C ~~a.e.$, where $C$ is a non negative integer:
\begin{equation}\label{mainStatement}
\overline{\sigma}_{\rho} f_{\tau} \leq f_{\rho}\;\;\mbox{a.e. on}~~\mathcal{S}.
\end{equation}
\end{theorem}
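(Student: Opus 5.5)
The plan is to use the supermartingale decomposition (Theorem~\ref{thm:decomposition}) to build an explicit superhedge of $f_\tau$ from the node $(S,\rho(S))$, starting from the value $f_\rho(S)$ plus an arbitrarily small slack.

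\textbf{Step 1 (setup).} Since $(L)$-a.e.\ holds, Theorem~\ref{thm:decomposition} applies to the real-valued supermartingale $\{f_j\}$. Fix $\varepsilon>0$ and choose $\delta_j=\varepsilon 2^{-j-1}$, so that $\sum_j\delta_j\le\varepsilon$. This gives non-anticipative $H_j$ and $A_j$, with $A$ nondecreasing and $A_0=0$, such that
\[
f_i=f_0+\sum_{j<i}H_j\Delta_jS-A_i+\sum_{j<i}\delta_j
\]
outside a null set $N_f$ that does not depend on $\varepsilon$.

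\textbf{Step 2 (exceptional set).} Let $N_1=\{\rho>\tau\}\cup\{\tau>C\}$, which is null by hypothesis. Let $N=N_f\cup N_1\cup\mathcal{N}^{(L)}$; this is null by countable subadditivity. We must show that off a null set $\tilde N$ we have $\overline{\sigma}_{\rho}f_\tau(S)\le f_\rho(S)+\varepsilon$ for every $\varepsilon$ in a countable family, for instance $\varepsilon=1/n$. The key device is the tower inequality for $\overline I$ (Corollary~\ref{towerPropertyForI-bar}): $\overline I(\overline I_\rho{\bf 1}_N)\le\overline I({\bf 1}_N)=0$. Hence $\overline I_{\rho(S)}{\bf 1}_N(S)=0$ for $S$ outside a null set $N'$, so $N$ is conditionally null at $(S,\rho(S))$. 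Since $\rho\le C$ off $N$, we have $\rho(S)<\infty$ there.

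\textbf{Step 3 (superhedge).} Fix $S\notin N\cup N'$ and set $j=\rho(S)$. On $\mathcal{S}_{(S,j)}\setminus N$ we have $j\le\tau(\tilde S)\le C$. Subtracting the decomposition at time $j$ from the decomposition at time $\tau$, and using that $A$ is nondecreasing, gives
\[
f_\tau(\tilde S)\le f_j(S)+\sum_{i=j}^{\tau(\tilde S)-1}H_i(\tilde S)\Delta_i\tilde S+\varepsilon .
\]
Here $f_j(\tilde S)=f_j(S)$ is a constant on the node, because $\rho$ is a stopping time and $f_j$ is non-anticipative. Define $G_i=H_i{\bf 1}_{\{i<\tau\}}$ for $i\ge j$; by Lemma~\ref{st-portfolio} it is non-anticipative. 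Then $f_\tau\le \Pi^{f_j(S)+\varepsilon,G}_{j,C}$ on $\mathcal{S}_{(S,j)}\setminus N$, and this bound is an elementary function with $n_f=C$.

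\textbf{Step 4 (removing the null set).} On $N$ the inequality may fail. Since $N$ is conditionally null at $(S,j)$, it is natural to invoke Proposition~A.2.1 of \cite{bender3}, as in the proof of Corollary~\ref{a.eAndStoppingTimes}: modifying $f_\tau$ on a conditionally null set does not change $\overline\sigma_j$. Replace $f_\tau$ by $g=f_\tau{\bf 1}_{N^c}+(-\infty){\bf 1}_N$. Then $g\le\Pi^{f_j(S)+\varepsilon,G}_{j,C}$ everywhere on the node, so $\overline\sigma_jg(S)\le f_j(S)+\varepsilon$ by definition, and therefore $\overline\sigma_j f_\tau(S)=\overline\sigma_j g(S)\le f_\rho(S)+\varepsilon$. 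Letting $\varepsilon\downarrow0$, which is allowed because $N$ is independent of $\varepsilon$, yields \eqref{mainStatement} off $N\cup N'$.

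\textbf{Main obstacle.} I expect the hardest step to be the null-set handling in Step 4. The set $N$, on which the decomposition or the bound $\tau\le C$ fails, must be shown to be conditionally negligible at $(S,\rho(S))$ for a.e.\ $S$. This is what Corollary~\ref{towerPropertyForI-bar} together with \cite[Prop.~A.2.1]{bender3} must deliver, and it may need $(L_{(S,j)})$, which holds since $S\notin\mathcal{N}^{(L)}$. The boundedness $\tau\le C$ is essential here: it makes the stopped portfolio elementary, so no positivity constraint, and hence no property $(P)$, is needed.
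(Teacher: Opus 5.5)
Your proof is correct and follows the paper's argument. The decomposition of Theorem~\ref{thm:decomposition} gives a superhedge of $f_\tau$ from the node $(S,\rho(S))$ with initial capital $f_\rho(S)+\delta$ and a zero-padded portfolio of deterministic maturity $C$, and the exceptional set is removed with the tower inequality for $\overline I$ (Corollary~\ref{towerPropertyForI-bar}); the only difference is organizational, since you absorb $\{\rho>\tau\}\cup\{\tau>C\}$ into one null set $N$ together with $N_f$, whereas the paper first reduces to $\rho\le\tau$ and $\tau\le C$ everywhere via $\rho\vee\tau$, $\tau\wedge C$ and Corollary~\ref{a.eAndStoppingTimes}.
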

\begin{proof}
We could assume for the purpose of the proof that $\rho \leq \tau$ on all of $\mathcal{S}$; otherwise, replace $\tau$ by $\tau'=\rho\vee\tau$. Then $\tau'$ is a stopping time,
$\rho\le \tau'$ on $\mathcal S$, and $\tau'=\tau$ a.e. Hence
$f_{\tau'}=f_\tau$ a.e., and Corollary~\ref{a.eAndStoppingTimes} gives
\[
\overline{\sigma}_\rho f_{\tau'}
=
\overline{\sigma}_\rho f_\tau
\quad a.e.
\]
Thus it is enough to prove the results under the stronger assumption
$\rho\le\tau$ everywhere.

\noindent
{\bf Case $\tau \leq C$ on $\mathcal{S}$}. We rely on Theorem \ref {thm:decomposition}, writing $\tau= \tau(S)$ and $\rho = \rho(S)$;
from (\ref{supermartingaleDecomposition}) it follows that
the following equality is valid for all $S \in N^C_f$ (where $N_f$ is the null set appearing in (\ref{supermartingaleDecomposition})),
\begin{equation} \nonumber 
f_{\tau}(S)= f_{\rho}(S) + \sum_{j= \rho}^{\tau -1} H_j(S) \Delta_j S + \sum_{j= \rho}^{\tau -1} \delta_j- A_{\tau}(S) + A_{\rho}(S).
\end{equation}
Therefore, the following inequality holds for all $\hat{S} \in \mathcal{S}_{(S, \rho(S))}$ and for any $S \in \mathcal{S}$
\begin{equation} \label{superhedgingTheSupermartingale}
f_{\tau}(\hat{S})\leq  f_{\rho}(S) + \sum_{j= \rho}^{C} H_j(\hat{S}) \Delta_j \hat{S} + \delta+ \infty~{\bf 1}_{N_f}(\hat{S}),
\end{equation}
where $\delta \equiv \sum_{j \geq 0} \delta_j$ and we extend the strategy by $H_j \equiv 0$, $j \geq \tau$,  so that $\sum_{j= \rho}^{\tau -1} H_j(\tilde{S}) \Delta_j S = \sum_{j= \rho}^{C} H_j(\tilde{S}) \Delta_j S$. By Lemma \ref{st-portfolio}, the resulting zero-padded strategy is still a well-defined portfolio with deterministic maturity $C$. Applying $\overline{\sigma}_{\rho(S)}$ to (\ref{superhedgingTheSupermartingale}) we obtain
\begin{equation} \nonumber 
\overline{\sigma}_{\rho} f_{\tau}(S)\leq  f_{\rho}(S) + \delta+ \overline{I}_{\rho} (\infty~{\bf 1}_{N_f})(S).
\end{equation}
We then compute, $0 \leq \overline{I}(\overline{I}_{\rho} (\infty~{\bf 1}_{N_f})) \leq \overline{I}(\infty~{\bf 1}_{N_f})=0$ where the second inequality follows from Corollary \ref{towerPropertyForI-bar} applied to the stopping times $0$ and $\rho$ (notice that $0 \leq \rho$) and the equality follows from the fact  that  $g \equiv \infty~{\bf 1}_{N_f}$ is a null function. Therefore, $\overline{I}_{\rho} (\infty~{\bf 1}_{N_f})$ is a null function and hence equals the $0$ function a.e. It then follows that $\overline{\sigma}_{\rho} f_{\tau} \leq  f_{\rho} + \delta$
holds a.e.,  from this fact
we then conclude the proof  of (\ref{mainStatement}) by letting $\delta\downarrow 0$.

\vspace{.1in}
\noindent
{\bf General Case $\tau \leq C~a.e.$ } 
Note that $f_\tau=f_{\tau\wedge C}$ a.e. Hence, by Corollary \ref{a.eAndStoppingTimes}, 
\begin{equation*}
	\overline{\sigma}_{\rho \wedge C}(f_{\tau})= \overline{\sigma}_{\rho \wedge C}(f_{\tau\wedge C})\leq f_{\rho  \wedge C},\quad \textnormal{a.e.}
\end{equation*}
applying the previous step to the bounded stopping times $\tau \wedge C$ and $\rho \wedge C$. It then follows that $\overline{\sigma}_{\rho}(f_{\tau}) \leq f_{\rho}$ a.e.,
 recalling that $\rho \leq C$ a.e.

\end{proof}

\subsection{Positive Supermartingale $\{f_j\}$ and general stopping times $\rho \leq \tau $}

Finally we present the case of a general non-negative supermartingale (real valued), the stopping times are left general but we need to assume the aditional property $(P)$ (see Definition \ref{def:Positivity}) to deal with unbounded time.

\begin{theorem}[Doob's optional stopping version III] \label{towerPropertyForgeneralSupermartingales}
Let $\{f_j\}_{j \geq 0}$ be a given non-negative,  real-valued supermartingale on $\mathcal{S}$ we also set $f_{T+1} \equiv \liminf_{n \rightarrow T+1} f_n$. Consider two stopping times satisfying
$\rho\le \tau$ a.e. Under the assumption that both,  $(L)-a.e$ and property $(P)$ (see Definition \ref{def:Positivity}), then:
\begin{enumerate}
\item 
\begin{equation} \label{generalizedTowerInequality3}
\overline{\sigma}_{\rho}(f_{\tau}) \leq f_{\rho}~~\mbox{a.e. on}~~\mathcal{S},
\end{equation}
\item 
\begin{equation} \label{generalizedTowerInequality4}
\overline{\sigma}_{\rho}({\bf 1}_{\{\tau < T+1\}}~f_{\tau}) \leq {\bf 1}_{\{\rho < T+1\}} f_{\rho}~\mbox{~a.e. on}~~\mathcal{S}.
\end{equation}
\end{enumerate}
\end{theorem}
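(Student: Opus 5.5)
Following the proof of Theorem \ref{doobIIIWithTwoStoppingTimes general}, I would first reduce to $\rho\le\tau$ everywhere. Replace $\tau$ by $\tau'=\rho\vee\tau$; then $f_{\tau'}=f_\tau$ a.e., and Corollary \ref{a.eAndStoppingTimes} transfers the conclusion from $\tau'$ to $\tau$. The two statements are nearly the same. Item 2 follows from the same superhedge as item 1: at nodes with $\rho(S)=T+1$ both sides reduce to the evaluation convention $\overline{\sigma}_\infty g=g$ together with Lemma \ref{takingOutCharacteristicFunctions}. Since $f_\tau\ge 0$, we have ${\bf 1}_{\{\tau<T+1\}}f_\tau\le f_\tau$, so wherever $\rho<T+1$ item 2 follows from item 1 by monotonicity. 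I therefore concentrate on item 1 at nodes $(S,j)$ with $j=\rho(S)<\infty$, and take $T=\infty$, since the finite case embeds.

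Apply Theorem \ref{thm:decomposition} with a summable sequence $\delta_j$, $\delta=\sum_j\delta_j$. Off the null set $N_f$, for every $n\ge j$ and every $\hat S\in\Se_{(S,j)}$,
\[
f_{\tau\wedge n}(\hat S)= f_\rho(S)+\sum_{i=j}^{n-1}G_i(\hat S)\Delta_i\hat S-(A_{\tau\wedge n}-A_\rho)(\hat S)+\sum_{i=\rho}^{\tau\wedge n-1}\delta_i ,
\]
where $G=H{\bf 1}_{\{i<\tau\}}$ is non-anticipative by Lemma \ref{st-portfolio}. Hence
\[
0\le f_{\tau\wedge n}\le \Pi^{f_\rho(S)+\delta,\,G}_{j,n}
\]
on $\Se_{(S,j)}\setminus N_f$. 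Moreover $f_\tau=\liminf_n f_{\tau\wedge n}$ pointwise. This holds trivially on $\{\tau<\infty\}$, and on $\{\tau=\infty\}$ it is exactly the definition $f_{T+1}=\liminf f_n$. Therefore
\[
f_\tau\le \liminf_{n}\Pi^{f_\rho(S)+\delta,G}_{j,n}
\]
off $N_f$. If the wealth process $\Pi^{f_\rho+\delta,G}_{j,n}$ were non-negative on all of $\Se_{(S,j)}$ for every $n$, it would be an admissible $m\ge1$ term in the definition of $\overline{\sigma}_j$. Adding a term $\infty\,{\bf 1}_{N_f}$, whose $\overline{I}_\rho$-value vanishes a.e. by the same argument as in Theorem \ref{doobIIIWithTwoStoppingTimes general} (Corollary \ref{towerPropertyForI-bar}), would then give $\overline{\sigma}_\rho f_\tau\le f_\rho+\delta$ a.e. Letting $\delta\downarrow0$ finishes the proof.

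The main obstacle is that the wealth process is only known to be $\ge f_{\tau\wedge n}\ge0$ off the null set $N_f$. On $N_f$ it may go negative, which violates the positivity constraint required of the $\liminf$ terms in infinite time, the issue that does not arise for bounded $\tau$. This is exactly where property $(P)$ enters, via \cite[Lemma 7.3]{bender3}. The plan is to modify $G$ at the first time the path enters a node where $(L)$ fails, setting $G$ to zero from that time on. This is again legitimate by Lemma \ref{st-portfolio}, applied to the first-failure stopping time. Property $(P)$ is then used to show that at the last up-down node where $(L)$ holds, the wealth is still non-negative. The reason is that the continuing branches $S^1,S^2$ on both sides of $S_{k+1}$ are $(L)$-good, which forces the value at $S_{k+1}$ to lie between non-negative values. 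Non-up-down nodes force $G\Delta\hat S\ge0$ or are degenerate. I expect checking that this truncated strategy stays non-negative on every path, including $N^{(L)}$ paths, to be the delicate step; the rest mirrors the bounded case.
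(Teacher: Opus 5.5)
Your reduction to $\rho\le\tau$ everywhere, your handling of item 2, and the pathwise bound $f_\tau\le\liminf_n\Pi^{f_\rho(S)+\delta,G}_{\rho,n}$ off $N_f$ are all fine. The step you defer is a genuine gap: you need this wealth process to be non-negative on all of $\mathcal{S}_{(S,\rho(S))}$, and the truncation you sketch does not establish that. Theorem~\ref{thm:decomposition} says nothing about $H$ on $N_f$. Moreover, $N_f$ is not just $\mathcal{N}^{(L)}$. The supermartingale inequality is only assumed a.e., so $N_f$ also contains paths through nodes where $\overline{\sigma}_kf_{k+1}\le f_k$ fails while $(L)$ holds. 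Stopping $G$ at the first $(L)$-failure does nothing on those paths. Your interpolation at the last up-down node needs the wealth to dominate $f_{k+1}\ge 0$ at the branches $S^1,S^2$. Property (P), however, only guarantees that $(L_{(S^n,k+1)})$ holds there. Likewise, nothing in the decomposition forces $G_k\Delta_k\hat S\ge 0$ at type I arbitrage nodes. Carrying out your plan amounts to re-proving \cite[Lemma 7.3]{bender3} from the internal construction of the decomposition, localized at the random node $(S,\rho(S))$.

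You also cannot simply cite that lemma for your process. It gives $f_0+\delta+\sum_{i<n}H_i\Delta_iS\ge 0$ for all $S$ and $n$. Off $N_f$, the value of this wealth at time $\rho$ is $f_\rho(S)+A_\rho(S)+\sum_{i\ge\rho}\delta_i$. That exceeds your initial capital $f_\rho(S)+\delta$ by $A_\rho(S)-\sum_{i<\rho}\delta_i$, which is positive whenever $A_\rho(S)>\sum_{i<\rho}\delta_i$. So your process is the lemma's non-negative wealth minus a positive constant, and its sign is not controlled.

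The missing idea is that the $m=0$ term in the definition of $\overline{\sigma}_\rho$ is an elementary function with no sign constraint. Take as the single $\liminf$ term the lemma's wealth restricted to $\mathcal{S}_{(S,\rho(S))}$ and stopped at $\tau$, with initial value $f_0(S)+\delta+\sum_{i<\rho}H_i(S)\Delta_iS$. It is non-negative everywhere, because at step $n$ it equals the lemma's wealth at time $\tau\wedge n$. Put the real constant $f_\rho(S)-f_0(S)-\sum_{i<\rho}H_i(S)\Delta_iS$, which may be negative, in the $m=0$ slot. The two costs add up to exactly $f_\rho(S)+\delta$. The superhedging inequality off $N_f$ is the one you already have, and the term $\infty\,{\bf 1}_{N_f}$ is handled as you describe. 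This is the paper's argument, and it needs no truncation.
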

\begin{proof}
As in the proof of Theorem~\ref{doobIIIWithTwoStoppingTimes general}, replacing
$\tau$ by $\rho\vee\tau$ reduces the case $\rho\le\tau$ a.e. to the case
$\rho\le\tau$ everywhere. We therefore assume $\rho\le\tau$ on all of $\mathcal{S}$ throughout the proof.
If $\rho(S) = \infty$ we recall that $\mathcal{S}_{(S, \infty)} \equiv \{S\}$
and $\overline{\sigma}_{\infty}$ is the identity operator. Hence,
 the left hand side of (\ref{generalizedTowerInequality4})
involves ${\bf 1}_{\{\tau < \infty\}}(S)~f_{\tau}(S)= 0~f_{\infty}(S)$ which
makes the first inequality in (\ref{generalizedTowerInequality4}) into an actual equality.
Similarly (\ref{generalizedTowerInequality3}) becomes $f_{\infty} = f_{\infty}$.
 
\vspace{.1in}
Consider then $\rho(S) < \infty$, we rely on Theorem \ref {thm:decomposition}, writing $\tau= \tau(S)$ and $\rho = \rho(S)$;
from (\ref{supermartingaleDecomposition}) it follows that the following equality is valid for all $\tilde{S} \in (N^C_f \cap \mathcal{S}_{(S, \rho)})$ (where $N_f$ is the null set appearing in (\ref{supermartingaleDecomposition})).
\begin{equation}   \nonumber
f_{\tau \wedge n}(\tilde{S}) \leq  f_{\rho}(S) + \sum_{j= \rho}^{\tau -1 \wedge n-1} H_j(\tilde{S}) \Delta_j \tilde{S} + \delta, \quad n\geq \rho(S),
\end{equation}
where we have used the fact that the sequence of functions $A_j$ is non-decreasing and set $\delta \equiv \sum_{j \geq 0} \delta_j$. 
Then
\begin{equation} \nonumber 
f_{\tau}(\tilde{S}) \leq f_{\rho}(S) + \liminf_{n \rightarrow \infty}\sum_{j= \rho}^{\tau -1 \wedge n-1} H_j(\tilde{S}) \Delta_j \tilde{S} + \delta.
\end{equation}
Therefore, the following inequality holds for all $\hat{S} \in \mathcal{S}_{(S, \rho(S))}$ and for any $S \in \mathcal{S}$ satisfying $\rho(S) < \infty$
\begin{eqnarray} \label{oneNeedsToAddArhoButOK}\nonumber
f_{\tau}(\hat{S})&\leq & f_{\rho}(S) + \delta+ \liminf_{n \rightarrow \infty} \sum_{j= \rho}^{\tau -1 \wedge n-1} H_j(\hat{S}) \Delta_j \hat{S} + \infty~{\bf 1}_{N_f}(\hat{S}) \\&=& 
f_{0}(S) + \liminf_{n \rightarrow \infty} \sum_{j= 0}^{n-1} H_j(\hat{S}) \Delta_j \hat{S} + \delta+ f_{\rho}(S)- f_0(S) - \sum_{j=0}^{\rho -1} H_j(\hat{S}) \Delta_j \hat{S}+ \infty~{\bf 1}_{N_f}(\hat{S}).
\end{eqnarray}

From  \cite[Lemma 7.3]{bender3}, $f_{0}(S)+ \delta+ \sum_{j= 0}^{n-1} H_k(S)  \Delta_k S
\geq 0$ holds for all $S \in \mathcal{S}$ and for all $n \geq 0$. In particular, $f_{0}(S)+ \delta+ \sum_{j= 0}^{\rho-1} H_k(S)  \Delta_k S +\sum_{j= \rho}^{n-1} H_k(\hat{S})  \Delta_k \hat{S}
\geq 0$ for all $\hat{S} \in \mathcal{S}_{(S, \rho)}$ and for all $n \geq \rho(S)$. 
Therefore applying
 $\overline{\sigma}_{\rho}$ to (\ref{oneNeedsToAddArhoButOK})
\begin{eqnarray*} \nonumber 
&& \overline{\sigma}_{\rho} f_{\tau}(S) \leq f_{0}(S)+ \delta+ \sum_{j= 0}^{\rho-1} H_j(S)  \Delta_j S +
\overline{\sigma}_{\rho}(f_{\rho}(S)- f_0(S) - \sum_{j=0}^{\rho -1} H_j(S) \Delta_j S) (S)+ \overline{I}_{\rho} (\infty~{\bf 1}_{N_f})(S) \\ &\leq &
f_{\rho}(S) + \delta +\overline{I}_{\rho} (\infty~{\bf 1}_{N_f})(S).
\end{eqnarray*} 
It then follows that
\begin{equation} \label{previousToConclusion}
\overline{\sigma}_{\rho}( ~f_{\tau})\leq  f_{\rho} + \delta, ~~\mbox{holds on}~~\mathcal{M}^C \cap \{S \in \mathcal{S}: \rho(S) < \infty\}
\end{equation}
where $\mathcal{M}$ is a null set such that $\overline{I}_{\rho} (\infty~{\bf 1}_{N_f})(S)=0$
for $S\in \mathcal{M}^C$. This function is indeed a null function, because $0\leq \bar I(\bar{I}_{\rho} (\infty~{\bf 1}_{N_f})) \leq  \overline I( \infty~{\bf 1}_{N_f})=0$ by Corollary \ref{towerPropertyForI-bar}. But, notice that (\ref{previousToConclusion}) also holds in the set $\{\rho = \infty\}$ and so it holds in all of $\mathcal{M}^C$, one can then conclude 
the proof  of (\ref{generalizedTowerInequality3}).
 Finally, (\ref{generalizedTowerInequality4}) holds for $S$ such that $\rho(S) = \infty$, it then remains to check it for the case $\rho(S) < \infty$. As  $f_n \geq 0$, we have
 ${\bf 1}_{\{\tau < \infty\}} f_{\tau} \leq {\bf 1}_{\{\rho < \infty\}} f_{\tau}$ and so
 by means of Lemma \ref{takingOutCharacteristicFunctions}
 and (\ref{generalizedTowerInequality3}) we compute
 \begin{equation} \nonumber
\overline{\sigma}_{\rho(S)} ({\bf 1}_{\{\tau < \infty\}} f_{\tau})(S) \leq \overline{\sigma}_{\rho(S)} ({\bf 1}_{\{\rho < \infty\}} f_{\tau})(S) \leq {\bf 1}_{\{\rho < \infty\}}(S) \overline{\sigma}_{\rho(S)} ( f_{\tau})(S) \leq {\bf 1}_{\{\rho < \infty\}}(S)  f_{\rho}(S)
 \end{equation}
 where the last inequality only holds on $\mathcal{M}^C \cap \{\rho < \infty\}$ where $\mathcal{M}$ is the null set where (\ref{generalizedTowerInequality3}) holds. Having already argued that (\ref{generalizedTowerInequality4}) holds on $\{\rho = \infty\}$ is follows from the above inequalities that holds on all of $\mathcal{M}^C $.
\end{proof}

\section{Dubins' Inequality} \label{dubinsInequality}

Here we prove a classical inequality of Dubins in our non-probabilistic/trajectorial setting. Dubins' classical result carries some empirical content as it expresses a sharp
bound for the superhedging price for the characteristic function of the set of upcrossings. 
Theorem \ref{dubinFiniteTime} below requires minimal hypothesis on the trajectory set hence, one expects, the result to be valid for a potential sequence of market prices. It is then possible to speculate that it will be {\it unlikely} for those observable trajectories to upcross a large number of times a given interval. The meaning of unlikely is here a natural extension of referring to arbitrage nodes (see Definition \ref{typesOfNodes}) as being unlikely because of the fact that they are null sets.
We first handle
the finite time case which is of main interest given the meaning of the result and also requires weaker hypothesis than the $T=\infty$ case.


For a  sequence of real valued functions $\{f_i\}_{0 \leq i < T+1}$, $T$ an integer constant with
$T =\infty$ a possibility, defined on $\Se$  and a real interval $[a,b],\;0 \leq a < b$, \emph{the band where upcrossings are counted}.
We will allow for two possible cases $T < \infty$ and $T= \infty$ in the setting
and will explicitly indicate when a particular result may assume $T < \infty$
(referred as {\it finite time}). We will be counting upcrossings for $f_0(S), \ldots, f_T(S)$
in the finite case $T < \infty$ but the stopping times may take the value $T+1$ and that is why we need non-essential access to $f_{T+1}$. 

\noindent
In what follows, let $\{f_i\}_{0 \leq i < T+1}$ be a sequence of non-anticipative functions, i.e. $f_i(S)=f_i(S_0,...,S_i)$. In particular, defined objects and results will depend on $\{f_i\}_{0 \leq i < T+1}$, as well as on $T$, although this may not be reflected in the notation.

\vspace{.05in}
Next we define the upcrossing times
$\rho_{k}, \tau_{k}:\Se \to \mathbb{N} \cup \{\infty\},\; k\ge 0$. 
\begin{definition}[Upcrossing Times]\label{upcrossing_stopping_times}
Set $\tau_0\equiv 0$. For any $S\in\Se$ define recursively,
\begin{equation} \nonumber
\rho_k(S) \equiv (\inf\{n: \tau_k(S)\le n,~ f_n(S) \leq a\}) \wedge (T+1),\; \mbox{for}\; k\ge 0.
\end{equation}
\begin{equation}  \nonumber
\tau_k(S) \equiv (\inf\{n: \rho_{k-1}(S)\le n,~ f_n(S) \geq b\}) \wedge (T+1),\; \mbox{for}\; k\ge 1.
\end{equation}
\end{definition}
We use the convention $\inf\emptyset=\infty$; hence, after the truncation by $T+1$, the corresponding crossing time equals $T+1$ whenever the relevant crossing set is empty. Notice that  $\rho_{k-1} \leq \tau_k \leq \rho_k$, $k \geq 1$.
The functions $\rho_k, \tau_k$ are stopping times, see Lemma 7 in \cite{ferrando} which covers the case $T= \infty$ but the finite time case then follows from the Definition \ref{upcrossing_stopping_times} and fact that the minimum of two stopping times is a stopping time.

We will say that $\rho_k$ is the time of the k-th $a$-drop and
$\tau_k$ is the time of the k-th $b$-increase, for $k \geq 1$. Notice that
${\bf 1}_{\{\tau_{k} < T+1\}}(S)=1$ if and only if  $\{f_j(S)\}_{0 \leq i < T+1}$
upcrosses the interval $[a,b]$ at least $k$ times.

In the sequel, for  simplicity,  once $S$ is clearly understood, we may write in some occasions $\rho_k, \tau_{k}$ instead of $\rho_k(S),\tau_{k}(S)$ for any $k\ge0$.

Most definitions in this section assume as given, sometimes implicitly, a sequence of functions $\{f_j\}_{0 \leq j < T+1}.$

\begin{definition}[Upcrossings Counting Function]\label{upcrossingDefinition}
For $0 \leq n < T+1$ and $S \in \mathcal{S}$, denote by $U^{[a,b]}_n(S)=U_n(S)$ the number of upcrossings of the sequence $\{f_i(S)\}_{i=0}^n$ through the interval $[a,b]$ and so given by:
\[U_n(S) =\max\{k\in \mathbb{N}: \tau_{k}(S)\le n\}.\]
The total number of upcrossings for $\{f_i(S)\}_{0 \leq i < T+1}$ is defined by
\[U^{[a,b]}(S)=U(S)= \sup\{U_n(S): 0 \leq n < T+1\}.\]
\end{definition}
\noindent Notice that if $0 \le n\le m$ then $0 \leq U_n\le U_m$ on $ \Se$.

\begin{theorem}[Dubins' inequality for finite time, $T < \infty$] \label{dubinFiniteTime}
Let $f_0, f_1, \ldots, f_T,$ be a given real valued supermartingale sequence with $f_n \geq c $ where $c$ is a constant. Assume the definitions of $\rho_k, \tau_k$ given by Definition \ref{upcrossing_stopping_times}  with $T < \infty$ (i.e. the finite time case). Furthermore, assume
that $(L)-a.e.$ holds; then,
for any $k\ge 0$ and $c \leq a < b$:
\begin{equation} \nonumber 
\overline{\sigma}({\bf 1}_{\{\tau_{k+1} < T+1\}}) \leq \left(\frac{a-c}{b-c} \right)~~ \overline{\sigma}({\bf 1}_{\{\rho_{k}  < T+1\}}) \leq
\left(\frac{a-c}{b-c}\right)~~ \overline{\sigma}({\bf 1}_{\{\tau_{k}  < T+1\}}),
\end{equation}
and so
\begin{equation} \label{couldBeCheckedEmpirically}
\overline{\sigma}({\bf 1}_{\{\tau_{k+1}  < T+1\}}) \leq
\left(\frac{a-c}{b-c} \right)^{k+1} ~\overline{\sigma}({\bf 1}_{\{\rho_{0}  < T+1\}}) \leq
\left(\frac{a-c}{b-c}\right)^{k+1}.
\end{equation}
\end{theorem}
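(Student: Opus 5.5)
The plan is to avoid any reverse tower inequality and argue directly from the supermartingale decomposition, Theorem \ref{thm:decomposition}. That theorem lets us superhedge ${\bf 1}_{\{\tau_{k+1}<T+1\}}$ by $\left(\frac{a-c}{b-c}\right){\bf 1}_{\{\rho_k<T+1\}}$ plus three extra terms: an elementary function with zero initial value, a small constant $\delta/(b-c)$, and a term $\infty\,{\bf 1}_{N_f}$ supported on a null set.

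First, the setup. Since the sequence is finite, we embed it in infinite time via $f_j\equiv f_T$ for $j\ge T+1$; the extended sequence is still a real-valued supermartingale. Because $f_{T+1}=f_T$, the value $f_{T+1}$ is harmless. The stopping times $\rho_k\le\tau_{k+1}$ are bounded by $C=T+1$.

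Next, fix $k\ge0$ and apply Theorem \ref{thm:decomposition} with $\sum_j\delta_j=\delta$. This gives $H$, a nondecreasing $A$, and a null set $N_f$. For $S\notin N_f$ we obtain
\[
f_{\tau_{k+1}}(S)-f_{\rho_k}(S)\le \sum_{j=\rho_k(S)}^{\tau_{k+1}(S)-1}H_j(S)\Delta_jS+\delta .
\]
This is exactly as in the proof of Theorem \ref{doobIIIWithTwoStoppingTimes general}: the $A$ terms are dropped because $A$ is nondecreasing.

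We now record the pointwise facts. If $\tau_{k+1}(S)<T+1$, then $\rho_k(S)<T+1$ and $f_{\tau_{k+1}}(S)\ge b$. If $\rho_k(S)<T+1$, then $f_{\rho_k}(S)\le a$. Also $f\ge c$ everywhere. Together these give, off $N_f$,
\[
{\bf 1}_{\{\tau_{k+1}<T+1\}}\le {\bf 1}_{\{\rho_k<T+1\}}\frac{f_{\tau_{k+1}}-c}{b-c}
\le \frac{a-c}{b-c}{\bf 1}_{\{\rho_k<T+1\}}+\sum_{j=0}^{T}G_j\Delta_jS+\frac{\delta}{b-c}.
\]
Here $G_j=\frac{1}{b-c}{\bf 1}_{\{\rho_k\le j<\tau_{k+1}\}}H_j$. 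On $\{\rho_k=T+1\}$ the bracket on the right is nonnegative and $G\equiv0$, so the last bound holds there as well; this uses $a\ge c$ and $f_{\tau_{k+1}}\ge c$. By Lemma \ref{st-portfolio}, applied twice with zero padding, $G$ is non-anticipative. Hence $e\equiv\Pi^{0,G}_{0,T+1}\in\mathcal{E}_0$ is an elementary function with initial value $0$ and deterministic maturity.

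To cover $N_f$ we add $\infty\,{\bf 1}_{N_f}$ to the right-hand side, which yields a global inequality. Now take $\overline{\sigma}$ of both sides. There are three ingredients. First, by subadditivity and positive homogeneity, the elementary term $e$ can be absorbed as the $f_0$ summand in the definition of $\overline{\sigma}$, so it contributes its initial value $0$. Second, the constant contributes $\delta/(b-c)$. Third, the null term contributes at most $\overline{I}(\infty{\bf 1}_{N_f})=0$, since any $\overline{I}$-superhedge is an admissible $\overline{\sigma}$-superhedge. Letting $\delta\downarrow0$ gives
\[
\overline{\sigma}({\bf 1}_{\{\tau_{k+1}<T+1\}})\le\frac{a-c}{b-c}\,\overline{\sigma}({\bf 1}_{\{\rho_k<T+1\}}).
\]
The second inequality in the statement follows from monotonicity of $\overline{\sigma}$, since $\{\rho_k<T+1\}\subseteq\{\tau_k<T+1\}$, together with $\frac{a-c}{b-c}\ge0$.

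For \eqref{couldBeCheckedEmpirically}, we iterate the first two inequalities down to $\rho_0$. The final bound then follows from $\overline{\sigma}({\bf 1}_{\{\rho_0<T+1\}})\le\overline{\sigma}(1)\le1$, using the constant portfolio.

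The step I expect to be delicate is this absorption into $\overline{\sigma}$ with only subadditivity available. Concretely, we must justify that $\overline{\sigma}(u+e+\kappa+\infty{\bf 1}_{N})\le\overline{\sigma}(u)+\kappa$ for an elementary $e$ with zero initial value, a constant $\kappa\ge0$ and a null set $N$. The intended justification is to take any near-optimal superhedge of $u$ and append $e$ to its $f_0$ term, adding $\kappa$ to the initial value. The part to verify carefully is that appending the nonnegative portfolios witnessing $\overline{I}(\infty{\bf 1}_N)=0$ keeps the sum in the admissible form, including the $\infty+(-\infty)$ conventions. If $\overline{\sigma}({\bf 1}_{\{\rho_k<T+1\}})$ could be $-\infty$, we would invoke $(L)$ to rule this out, because under $(L)$ it is $\ge\underline{\sigma}\ge0$.
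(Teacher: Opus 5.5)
Your proposal is correct and follows essentially the same route as the paper: you apply the supermartingale decomposition between $\rho_k$ and $\tau_{k+1}$, drop the nondecreasing $A$-terms, use $f_{\tau_{k+1}}\ge b$ on $\{\tau_{k+1}<T+1\}$ and $f_{\rho_k}\le a$ on $\{\rho_k<T+1\}$ with a zero-padded finite-maturity elementary portfolio, and use $\overline{\sigma}(\infty{\bf 1}_{N_f})\le\overline{I}(\infty{\bf 1}_{N_f})=0$ before letting $\delta\downarrow 0$. The only differences are cosmetic: you normalize by $(f-c)/(b-c)$ directly rather than reducing to $c=0$, and you spell out the absorption of the elementary and null terms that the paper summarizes as subadditivity of $\overline{\sigma}$.
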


\begin{proof}
It is  enough to provide a proof for the case $c=0$ as the general case
can be obtained from it.
For the purpose of the proof we set $f_{T+1} \equiv f_T$.

We will prove: $\overline{\sigma}({\bf 1}_{\{\tau_{k+1} < T+1\}}) \leq
\frac{a}{b}~~ \overline{\sigma}({\bf 1}_{\{\rho_{k} < T+1\}})$ and remark that
$\{\rho_{k} < T+1\} \subseteq
\{\tau_{k} < T+1\}$. These facts will complete the proof of the theorem.

\vspace{.1in}
From  Theorem \ref{thm:decomposition} we obtain the following equality being valid in the complement of a null set $\mathcal{M}$
\begin{equation} \nonumber
    f_{\tau_{k+1}}(S)=  f_{\rho_{k}}(S)+ \sum_{i= \rho_k}^{\tau_{k+1}-1} H_i(S) \Delta_i S - A_{\tau_{k+1}}(S)+ A_{\rho_{k}}(S)+ \sum_{i= \rho_k}^{\tau_{k+1}-1} \delta_i.
\end{equation}
Therefore, given that $f_n \geq 0$, the following inequality holds on $\mathcal{S}$
\begin{equation} \label{firstStep1}
{\bf 1}_{\{\tau_{k+1} < T+1\}}(S)~~f_{\tau_{k+1}}(S) \leq
{\bf 1}_{\{\rho_{k} < T+1\}}(S)~f_{\rho_{k}}(S)~+~ \sum_{i=0}^T \hat{H}_i(S) ~\Delta_i S+ \delta+ \infty ~{\bf 1}_{\mathcal{M}}(S)
\end{equation}
where $\hat{H}_i \equiv {\bf 1}_{\{\rho_{k} < T+1\}}~H_i$ whenever $\rho_k \leq i \leq \tau_{k+1}-1$ and  $\hat{H}_i= 0$ otherwise.

We now apply $\overline{\sigma}$ to (\ref{firstStep1}) and use subadditivity
of this functional and the fact that $\overline{\sigma }(\infty {\bf 1}_{\mathcal{M}}) \leq
\overline{I}(\infty {\bf 1}_{\mathcal{M}})=0$
 to get
\begin{equation} \nonumber 
\overline{\sigma}({\bf 1}_{\{\tau_{k+1} < T+1\}})\leq
~\frac{a}{b} \overline{\sigma}({\bf 1}_{\{\rho_{k} < T+1\}}) + \frac{\delta}{b},
\end{equation}
which concludes the proof as $\delta$ is arbitrary.
\end{proof}


\vspace{.1in}
Corollary \ref{dubinMovingBandsCorollary} below
will make use of the following definition which counts upcrosses but, this time,
through bands that evolve over time as a fixed portfolio.

For a given sequence of non-anticipative functions $g_0, \ldots, g_T$, and $\Pi_{0, T}^{c, H}(S) \equiv c+\sum_{k=0}^{T-1} H_k(S) \Delta_k S$, with $H_k$ non-anticipative and real numbers $a,b,c$ satisfying  $c \leq a < b$, define:
for any $S\in\Se$, set $\tau_0(S)\equiv 0$ and  define recursively,
\begin{equation} \nonumber
\rho_k(S) \equiv \inf\{n: \tau_k(S)\le n < T+1, g_n(S) \leq \Pi_{0, n}^{a, H}(S)\},\; \mbox{for}\; k\ge 0.
\end{equation}
\begin{equation}  \nonumber
\tau_k(S) \equiv \inf\{n: \rho_{k-1}(S)\le n < T+1; g_n(S) \geq \Pi_{0, n}^{b, H}(S)\},\; \mbox{for}\; k\ge 1.
\end{equation}
Moreover, as we have done before, we let these stopping times to equal $T+1$ if the sets where infimum is evaluated turn out to be empty.

\begin{corollary}[Dubins' inequality with portfolio bands, $T < \infty$] \label{dubinMovingBandsCorollary}
Let $g_0, \ldots, g_T$ be a given, real valued, supermartingale. Also, $\Pi_{0, k}^{c, H}$ is a portfolio process as indicated above and such that $g_j \geq  \Pi_{0, j}^{c, H}$ on $\mathcal{S}$ and for all $0 \leq j \leq T$.

 Assume the definitions of $\rho_k, \tau_k$ as above, in particular $c \leq a <b$ are given  and $T < \infty$. Furthermore assume that $(L)-a.e.$ holds, then for any $k \geq 0$:
\begin{equation} \label{manyIntervals}
\overline{\sigma}({\bf 1}_{\{\tau_{k+1} < T\}}) \leq
\Big(\frac{a-c}{b-c} \Big)^{k+1}.
\end{equation}
\end{corollary}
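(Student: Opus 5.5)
The plan is to reduce Corollary \ref{dubinMovingBandsCorollary} to Theorem \ref{dubinFiniteTime} by subtracting the portfolio. Set
\[
f_j \equiv g_j - \Pi_{0,j}^{0,H}, \qquad 0\le j\le T,
\]
where $\Pi_{0,j}^{0,H}=\sum_{k=0}^{j-1}H_k\Delta_k S$. Then $\{f_j\}$ is non-anticipative and real valued, and $f_j\ge c$ on $\mathcal{S}$ because $g_j\ge \Pi_{0,j}^{c,H}=c+\Pi_{0,j}^{0,H}$. Moreover $g_n\le \Pi_{0,n}^{a,H}$ iff $f_n\le a$, and $g_n\ge \Pi_{0,n}^{b,H}$ iff $f_n\ge b$. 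Consequently the moving-band stopping times $\rho_k,\tau_k$ of the corollary coincide with the upcrossing times of Definition \ref{upcrossing_stopping_times} computed for $\{f_j\}$ and the fixed band $[a,b]$.

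The main step is to check that $\{f_j\}$ is a supermartingale. I would do this through Theorem \ref{thm:decomposition}. Since $(L)$-a.e.\ holds and $\{g_j\}$ is a supermartingale (embedded in infinite time by $g_j\equiv g_T$ for $j>T$), for any $(\delta_j)$ we get, off a null set $N_g$,
\[
g_i=g_0+\sum_{j<i}H^g_j\Delta_jS-A_i+\sum_{j<i}\delta_j .
\]
Subtracting $\Pi_{0,i}^{0,H}$ gives the same representation for $f_i$, with $H^g_j-H_j$ in place of $H^g_j$ (extending $H_j\equiv0$ for $j\ge T$). This difference is still non-anticipative, $A$ is unchanged, and the null set $N_g$ is independent of $\delta$. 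The converse direction (ii)$\Rightarrow$(i) of Theorem \ref{thm:decomposition} then yields that $\{f_j\}$ is a supermartingale.

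Alternatively one can argue directly: on $\mathcal{S}_{(S,j)}$ the term $-H_j\Delta_jS$ is an elementary function with value $0$ at time $j$, so subadditivity gives $\overline{\sigma}_j f_{j+1}\le \overline{\sigma}_j g_{j+1}$ plus the hedge term. Under $(L_{(S,j)})$ we have $\overline{\sigma}_j(g-\Pi)=\overline{\sigma}_j g - V$, as in \cite[Proposition 3.3]{bender3}. This gives $\overline{\sigma}_j f_{j+1}\le f_j$ a.e. The decomposition route avoids these operator technicalities, so I would use it.

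Applying Theorem \ref{dubinFiniteTime} to $\{f_j\}$ with constant $c$ gives
\[
\overline{\sigma}\bigl({\bf 1}_{\{\tau_{k+1}<T+1\}}\bigr)\le \Bigl(\frac{a-c}{b-c}\Bigr)^{k+1}.
\]
Since $\{\tau_{k+1}<T\}\subseteq\{\tau_{k+1}<T+1\}$, monotonicity of $\overline{\sigma}$ yields (\ref{manyIntervals}).

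I expect the main obstacle to be bookkeeping rather than substance. One point is the convention that the infimum equals $T+1$ on empty sets, which matches Definition \ref{upcrossing_stopping_times}. The other is confirming that the decomposition theorem stated for infinite sequences applies after embedding the finite horizon.
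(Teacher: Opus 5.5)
Your proposal is correct and follows the paper's proof. You subtract $\Pi_{0,j}^{0,H}$ to get $f_j\ge c$, note that the moving-band times are exactly the fixed-band upcrossing times of $\{f_j\}$, and invoke Theorem~\ref{dubinFiniteTime}. You also justify two points the paper only asserts: the supermartingale property of $\{f_j\}$, and the passage from $\{\tau_{k+1}<T+1\}$ to $\{\tau_{k+1}<T\}$ by monotonicity. Either of your routes to the supermartingale property works; the direct one does not even need $(L_{(S,j)})$, since $\overline{\sigma}_j(g-\Pi)\le\overline{\sigma}_j g - V$ follows straight from the definition of $\overline{\sigma}_j$.
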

\begin{proof}
Define $f_j = g_j - \Pi_{0,j}^{0, H}$ for $0 \leq j \leq T$, this is a supermartingale sequence that satisfies $f_j \geq c$ on $\mathcal{S}$ and for all $0 \leq j \leq T$. Then, the stopping times
$\rho_k, \tau_k$ defined above count exactly the upcrosses of the sequence $\{f_j\}_{0 \leq j \leq T}$ through the band $[a,b]$. Then (\ref{manyIntervals}) follows from Theorem \ref{dubinFiniteTime}.
\end{proof}

In the above proof, the portfolio $H_k$ is completely arbitrary (besides being non-anticipative) showing how the upperbound depends on the initial investments $a,b$ (as well as $c$) only.

\subsection{Dubins' Inequality for Infinite Time}
We now extend the previous theorem by considering the case when $T \leq \infty$.
The main novelty is that infinite time portfolios, i.e. involving $\liminf$, are required for upperbounding and hence we need to make sure they are nonnegative, to enforce this property will require an additional hypothesis.  We also note that
the result is stated for $\overline{I}$ as opposed to $\overline{\sigma}$ and so providing some more generality but,  this additional generality is illusory as the hypotheses required for the $T= \infty$ case imply $\overline{\sigma} = \overline{I}$ on positive functions as we show in Section \ref{propertyKae}. For simplicity we will take $c=0$ in the formulation of the next theorem (compare to Theorem \ref{dubinFiniteTime}).

\vspace{.1in}
One could speculate that  a more natural approach to proving the result would be to cut-off time to a finite value, apply the previous result and then take an appropriate limit. This approach is available in the stochastic setting due to the monotone convergence theorem. Given that we are working with the
sublinear operator $\overline{\sigma}$ we do not have a result that would allow us to apply the operator first and then take the limit. 


\begin{theorem}[Dubins' inequality for infinite time, $T = \infty$] \label{dubinInfiniteTimeA}
Let $\{f_j\}_{j \geq 0}$ be a given real valued non-negative supermartingale sequence and $\rho_k, \tau_k$ given by Definition \ref{upcrossing_stopping_times}  with $T = \infty$ (i.e. the infinite time case). Furthermore, assume that $(L)-a.e.$ holds as well as property $(P)$ (as per Definition \ref{def:Positivity})  then,
for any $k\ge 0$ and $0 \leq a < b$:
\begin{equation} \nonumber 
\overline{I}({\bf 1}_{\{\tau_{k+1} < \infty\}}) \leq \frac{a}{b}~~ \overline{I}({\bf 1}_{\{\rho_{k}  < \infty\}}) \leq
\frac{a}{b}~~ \overline{I}({\bf 1}_{\{\tau_{k}  < \infty\}}),
\end{equation}
and so
\begin{equation} \label{couldBeCheckedEmpiricallyA}
\overline{I}({\bf 1}_{\{\tau_{k+1}  < \infty\}}) \leq
(\frac{a}{b})^{k+1} ~\overline{I}({\bf 1}_{\{\rho_{0}  < \infty\}}) \leq
(\frac{a}{b})^{k+1}.
\end{equation}
\end{theorem}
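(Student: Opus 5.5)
I will follow the strategy of Theorem~\ref{dubinFiniteTime}, with two changes. The finite sum of trading gains is replaced by a $\liminf$ of non-negative elementary portfolios. The superhedge is built directly from the definition of $\overline{I}$, which only allows non-negative $\liminf$-portfolios, so no truncation argument is needed. It suffices to prove the first inequality, $\overline{I}({\bf 1}_{\{\tau_{k+1}<\infty\}})\le \frac{a}{b}\,\overline{I}({\bf 1}_{\{\rho_k<\infty\}})$. The second follows from $\{\rho_k<\infty\}\subseteq\{\tau_k<\infty\}$ and monotonicity of $\overline{I}$. Iterating, and using $\overline{I}({\bf 1}_{\mathcal S})\le 1$ as in Remark~\ref{L0ForNonTrivialityOfComplement}, gives (\ref{couldBeCheckedEmpiricallyA}). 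Since $f_{\tau_{k+1}}\ge b$ on $\{\tau_{k+1}<\infty\}$, it is enough to superhedge $b\,{\bf 1}_{\{\tau_{k+1}<\infty\}}$ by $\frac{a}{b}$-scaled copies of a superhedge of ${\bf 1}_{\{\rho_k<\infty\}}$, plus a non-negative portfolio with arbitrarily small initial cost.

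Fix $\delta_j>0$ with $\delta=\sum_j\delta_j$ and apply Theorem~\ref{thm:decomposition}. This gives $H,A$ and a null set $N_f$ such that, off $N_f$ and on $\{\rho_k<\infty\}$,
\[
f_{\tau_{k+1}\wedge n}\le f_{\rho_k}+\sum_{i=\rho_k}^{(\tau_{k+1}\wedge n)-1}H_i\Delta_iS+\delta ,\qquad n\ge\rho_k .
\]
Set $\hat H_i={\bf 1}_{\{\rho_k\le i<\tau_{k+1}\}}H_i$; this is non-anticipative by Lemma~\ref{st-portfolio} together with Lemma~\ref{constantCharactFunctions}. Since $f_n\ge 0$, for $n\ge\rho_k$ the process $Y_n:=f_{\rho_k}+\delta+\sum_{i<n}\hat H_i\Delta_iS$ dominates $f_{\tau_{k+1}\wedge n}\ge 0$ off $N_f$. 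On $\{\tau_{k+1}<\infty\}$ it is eventually constant and at least $b$, so $\liminf_n Y_n\ge b\,{\bf 1}_{\{\tau_{k+1}<\infty\}}$.

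Now take an arbitrary admissible representation ${\bf 1}_{\{\rho_k<\infty\}}\le\sum_m\liminf_n\Pi_{0,n}^{V^m,K^m}$ with non-negative portfolios and $\sum V^m\le\overline{I}({\bf 1}_{\{\rho_k<\infty\}})+\varepsilon$. Let $\Phi_n$ denote the gain process that holds $\frac{a}{b}\sum_m K^m$ up to time $\rho_k$, then switches (again via Lemma~\ref{st-portfolio}) to $\hat H/b$ scaled by the wealth level. More concretely, I would use one portfolio per $m$: before $\rho_k$ follow $\frac ab K^m$; at $\rho_k$ this portfolio has value $\ge\frac ab\,\Pi^{V^m,K^m}_{0,\rho_k}$, and these values sum to at least $\frac ab\ge \frac{f_{\rho_k}}{b}$. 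Afterwards follow the scaled strategy, so that the terms sum to at least $\frac1b(Y_n-\delta)$, plus one extra term $\delta/b$ with its own continuation. The total $\liminf$ then dominates ${\bf 1}_{\{\tau_{k+1}<\infty\}}$ off $N_f$. The null set is absorbed by adding a superhedge of $\infty{\bf 1}_{N_f}$ of arbitrarily small cost. This gives the bound $\frac ab(\overline{I}({\bf 1}_{\{\rho_k<\infty\}})+\varepsilon)+\frac{\delta}{b}+\varepsilon$, and we let $\varepsilon,\delta\downarrow0$.

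The main obstacle is non-negativity at every time $n$ of each summand portfolio, which the definition of $\overline{I}$ requires. This concerns in particular the post-$\rho_k$ part $f_{\rho_k}+\delta+\sum_{i=\rho_k}^{n-1}\hat H_i\Delta_iS$ on all paths, including those in $N_f$ and those where $(L)$ fails. Here I would invoke \cite[Lemma 7.3]{bender3} under property $(P)$, exactly as in the proof of Theorem~\ref{towerPropertyForgeneralSupermartingales}. It gives $f_0+\delta+\sum_{i<n}H_i\Delta_iS\ge0$ everywhere, and hence, by the decomposition identity at node $(S,\rho_k)$, non-negativity of the stopped, restarted wealth. A second delicate point is splitting $f_{\rho_k}\le a$ among the countably many $K^m$-portfolios while keeping each non-negative. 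I would first try allocating the continuation only to a single summand (the one carrying value at $\rho_k$) and fall back on proportional allocation if needed.
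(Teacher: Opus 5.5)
There is a genuine gap at the step you yourself flag as the main obstacle: non-negativity, on every path and at every time, of the continuation after $\rho_k$. Your derivation of it from \cite[Lemma 7.3]{bender3} does not go through.

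That lemma, applied to the decomposition of $f$, controls the wealth $f_0+\delta+\sum_{i<n}H_i\Delta_iS$ started at time $0$. At the node $(S,\rho_k(S))$ the decomposition identity, which is available only off $N_f$, reads $f_0+\sum_{i<\rho_k}H_i\Delta_iS=f_{\rho_k}+A_{\rho_k}-\sum_{i<\rho_k}\delta_i$. So what you actually obtain is
\[
f_{\rho_k}(S)+A_{\rho_k}(S)+\delta+\sum_{i=\rho_k}^{n-1}H_i\Delta_i\hat S\ \ge\ 0,\qquad \hat S\in\mathcal S_{(S,\rho_k(S))},
\]
with the compensator $A_{\rho_k}\ge 0$ on the left. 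Your continuation is funded only by about $\frac ab\ge \frac{f_{\rho_k}}{b}$, not by $(f_{\rho_k}+A_{\rho_k})/b$.

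Off $N_f$ the restarted wealth does equal $f_n+(A_n-A_{\rho_k})+(\delta-\sum_{i=\rho_k}^{n-1}\delta_i)\ge 0$. However, $\overline I$ also demands non-negativity on cones lying entirely inside $N_f$ or $\mathcal N^{(L)}$, and that is exactly what (P) and Lemma 7.3 are needed for.

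In Theorem~\ref{towerPropertyForgeneralSupermartingales} the same offset is harmless, because the constant $f_\rho-f_0-\sum_{j<\rho}H_j\Delta_jS$ is absorbed into the signed elementary term of $\overline\sigma$. For $\overline I$ every summand must be non-negative, so that device is unavailable.

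A smaller omission of the same kind: $\sum_m\Pi^{V^m,K^m}_{0,\rho_k}(S)\ge 1$ is not automatic. It follows from $1=\overline\sigma_{\rho_k}1\le\overline I_{\rho_k}{\bf 1}_{\{\rho_k<\infty\}}(S)\le\sum_m\Pi^{V^m,K^m}_{0,\rho_k}(S)$ (the argument in the proof of Tower Property I) only where $(L_{(S,\rho_k(S))})$ holds. Elsewhere the switch has to be suppressed.

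The missing idea is to avoid restarting at $\rho_k$ altogether. The paper pastes a single non-negative, real-valued supermartingale on all of $\mathcal S$:
\[
f^{[k]}_j=\overline I_j({\bf 1}_{\{\rho_k<\infty\}}f_{\rho_k})\ \ (j\le\rho_k),\qquad f^{[k]}_j=f_j\ \ (j>\rho_k).
\]
Before $\rho_k$ the supermartingale property comes from $\overline\sigma_j\le\overline I_j$ on $P$ and Corollary~\ref{towerPropertyForI-bar}. At $\rho_k$ it comes from $\overline I_jf_j=f_j$ a.e.\ on $\{\rho_k=j\}$ under $(L)$-a.e. Finiteness before $\rho_k$ follows from $f^{[k]}_j\le a$.

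Theorem~\ref{thm:decomposition} and Lemma 7.3 are then applied to $f^{[k]}$ itself, from time $0$. The required non-negativity is therefore exactly the time-$0$ statement, and the compensator accumulated before $\rho_k$ is financed automatically. One non-negative portfolio, stopped at $\tau_{k+1}$, gives
\[
b\,\overline I({\bf 1}_{\{\tau_{k+1}<\infty\}})\le\overline I({\bf 1}_{\{\tau_{k+1}<\infty\}}f^{[k]}_{\tau_{k+1}})\le f^{[k]}_0+\delta\le a\,\overline I({\bf 1}_{\{\rho_k<\infty\}})+\delta .
\]
No countable superhedge of ${\bf 1}_{\{\rho_k<\infty\}}$ has to be split, and no node-wise switching is needed. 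To rescue your construction you would have to prove a restarted version of Lemma 7.3 at the random node $(S,\rho_k(S))$. The pasted supermartingale provides essentially that for free.
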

\begin{proof}
We will prove: $\overline{I}({\bf 1}_{\{\tau_{k+1} < \infty\}}) \leq
\frac{a}{b}~~ \overline{I}({\bf 1}_{\{\rho_{k} < \infty\}})$ and remark that
$\{\rho_{k} < \infty\} \subseteq
\{\tau_{k} < \infty\}$. These facts will complete the proof of the theorem.

Define, for a fixed $k \geq 0$: 
\begin{equation} \nonumber
f_j^{[k]} \equiv \overline{I}_j({\bf 1}_{\{\rho_k < \infty\}} f_{\rho_k})~\mbox{whenever}~0 \leq j \leq \rho_k   
\end{equation}
and 
\begin{equation} \nonumber
f_j^{[k]}= f_j~\mbox{whenever}~j > \rho_k.    
\end{equation} 
We argue next that $\{f_j^{[k]}\}_{ j \geq 0}$ is a supermartingale
(indeed, we are pasting two supermartingales) taking values on $[0, \infty)$.

Consider an arbitrary node $(S,j)$. In the case when  $j < \rho_{k}(S)$ we have $j+1\leq\rho_k(\tilde S)$ for every $\tilde S\in \Se_{(S,j)}$, and, hence, $f_{j+1}^{[k]}(\tilde S)=  \overline{I}_{j+1}({\bf 1}_{\{\rho_k < \infty\}} f_{\rho_k})(\tilde S)$. Therefore,	
\begin{equation} \nonumber
\overline{\sigma}_j f_{j+1}^{[k]}(S) \leq \overline{I}_j(\overline{I}_{j+1} {\bf 1}_{\{\rho_k < \infty\}} f_{\rho_k})(S) \leq  \overline{I}_j( {\bf 1}_{\{\rho_k < \infty\}} f_{\rho_k})(S)= f_j^{[k]}(S),
\end{equation}
where the first inequality follows from $\overline{\sigma}_j \leq \overline{I}_j$, which holds on $P$, and the last  inequality is the tower property for the operator $\overline{I}_j$ (namely Corollary \ref{towerPropertyForI-bar}). For the case case when $j\geq  \rho_k(S)$ we have $j\geq  \rho_k(\tilde S)$ for every $\tilde S\in \Se_{(S,j)}$ and, hence, $f_{j+1}^{[k]}(\tilde S)=  f_{j+1}(\tilde S)$. Thus,
\begin{equation} \nonumber
\overline{\sigma}_j f_{j+1}^{[k]}(S) = \overline{\sigma}_j(f_{j+1})(S) \leq f_j(S)
= f_j^{[k]}(S), \quad \textnormal{a.e.},
\end{equation}
where the  inequality is the supermartingale property of $\{f_j\}_{j\geq 0}$. The last equality in the previous display is obvious for $j>\rho_k(S)$, whereas on $\{\rho_k=j\}$, $f_j^{[k]} = \overline{I}_j({\bf 1}_{\{\rho_k < \infty\}} f_{\rho_k})=\overline{I}_jf_j=f_j$ a.e. thanks to assumption $(L)$-a.e. The above computations establish $\overline{\sigma}_j f_{j+1}^{[k]} \leq f_j^{[k]}$ a.e.

$f_j^{[k]}(S) \geq 0$ for all $S$ is clear given that $\overline{I}_j$ is isotone and $\overline{I}_j0(S)=0$ for all $(S,j)$). The fact that $f_j^{[k]}(S) < \infty $  follows from $\overline{I}_j({\bf 1}_{\{\rho_k < \infty\}} f_{\rho_k(S)})(S) \leq a$ and $f_j(S) < \infty$ both valid for all $S$.

\vspace{.1in}
From the above considerations, Theorem \ref{thm:decomposition}  is applicable and yields the validity of the following identity a.e.:
\begin{equation}  \label{usefulT-Infinite}
   f^{[k]}_{\tau_{k+1} \wedge n}(S) =   f^{[k]}_{0}(S)+ \sum_{j=0}^{\tau_{k+1} \wedge n -1} H_j(S) ~\Delta_j S - A_{\tau_{k+1} \wedge n}+ \sum_{j=0}^{\tau_{k+1} \wedge n -1} \delta_j
\end{equation}
with $\delta_j >0$ are given constants and summable ($\delta \equiv \sum_{j \geq 0}\delta_j$) but, otherwise arbitrary, the $A_j(S)$ are non-anticipative non-decreasing functions with $A_0=0$. Of course, the functions $H_j$ are non-anticipative as well.
From (\ref{usefulT-Infinite}), the fact that $f_{j}^{[k]} \geq 0$  and passing to $\liminf_{n \rightarrow \infty}$, it follows that the following holds everywhere on $\mathcal{S}$
\begin{equation}  \label{fromUsefulT-Infinite}
   {\bf 1}_{\{\tau_{k+1}  < \infty\}}~f^{[k]}_{\tau_{k+1}}(S)  \leq   f^{[k]}_{0}(S)+ \liminf_{n \rightarrow \infty} \sum_{j=0}^{n -1} F_j(S) ~\Delta_j S + \delta +\infty ~ {\bf 1}_{\mathcal{M}},
\end{equation}
where $F_i \equiv {\bf 1}_{\{i < \tau_{k+1}\}}~ H_i$ and ${\mathcal{M}}$ is a null set. From Lemma 7.3 in \cite{bender3} (which is available thanks to our assumption $(P)$) we know that
\begin{equation} \nonumber
    f_0^{[k]} + \delta + \sum_{j=0}^{i-1} H_j(S) ~\Delta_j S\geq 0
\end{equation}
holds for all $S$ and $i \geq 0$. Therefore: $ 0 \leq (\sum_{j=0}^{(i-1)\wedge (\tau_{k+1} -1)} H_j(S) ~\Delta_j S + \delta +f_0^{[k]} ) =  \sum_{j=0}^{i-1} F_j(S) ~\Delta_j S + \delta+ f_0^{[k]}$. This later property allows to apply $\overline{I}$ to (\ref{fromUsefulT-Infinite}), leading to
\begin{equation}  \nonumber
  \overline{I}({\bf 1}_{\{\tau_{k+1}  < \infty\}}~f^{[k]}_{\tau_{k+1}}) \leq   f^{[k]}_{0}+ \delta= \overline{I}({\bf 1}_{\{\rho_k < \infty\}} f_{\rho_k}) + \delta \leq a ~\overline{I}({\bf 1}_{\{\rho_k < \infty\}})+\delta,
\end{equation}
but $f^{[k]}_{\tau_{k+1}}= {\bf 1}_{\{\rho_k < \infty\}} f_{\tau_{k+1}}$ and ${\bf 1}_{\{\tau_{k+1}< \infty\}}~{\bf 1}_{\{\rho_k < \infty\}}= {\bf 1}_{\{\tau_{k+1} < \infty\}}$. Therefore from the above inequality we get
\begin{equation}  \nonumber
  \overline{I}({\bf 1}_{\{\tau_{k+1}  < \infty\}}) \leq  \frac{a}{b} ~\overline{I}({\bf 1}_{\{\rho_k < \infty\}})+ \frac{\delta}{b}.
\end{equation}
Given that $\delta$ is arbitrary we have completed the proof.
\end{proof}

\section{Property $(K)-a.e.$}  \label{propertyKae}

Consider a node $(S,j)$, for any $f \geq 0$, i.e. $f \in P$,  we have 
$\overline{\sigma}_j f \leq \overline{I}_j f$ and if $(L_{(S,j)})$ holds we then have
$0 \leq \overline{\sigma}_j f \leq \overline{I}_j f$. It is possible, for some $f$ and some trajectory sets, to have
$\overline{\sigma}_j f < \overline{I}_j f$ (see Example 1 in \cite{bender3}).
The possibility that $\overline{\sigma}_j \neq \overline{I}_j$ on positive functions reflects
the generality of the results we have presented, in particular, this feature of our theory set us apart from game theoretical probability as presented in 
\cite{shafer} (see the detailed comparison in Section 4 of \cite{bender3}).

In this section we provide  sufficient conditions to guarantee $\overline{\sigma}_j f = \overline{I}_j f$ a.e. on non-negative functions and, at the same time, shed some light on the phenomena. In particular, we will have sufficient conditions for the validity of $\overline{\sigma} f= \overline{I}f$ for all $f \in P$, a fact that makes several of the results in the paper available to the operator $\overline{I}$.

The property $(L)$-a.e.\ plays a central role throughout the paper. 
The stronger property $(K)$-a.e., introduced below, complements several of our main results by providing a structural criterion for the equality $\overline{\sigma}_j f = \overline{I}_j f$ to hold a.e. on non-negative functions. 

As with property $(L_{(S,j)})$, it is convenient to formulate the condition locally at a fixed node $(S,j)$. The following definition isolates the corresponding nodewise property $(K_{(S,j)})$, which will later be shown equivalent to the equality
\[
\overline{\sigma}_j f(S)=\overline{I}_j f(S), \qquad f\in P.
\]

\begin{definition}[$(K_{(S,j)})$]  \label{definitionOfK}
Fix an arbitrary node $(S,j)$ and let  $g_n \equiv \sum_{m=0}^{n} f_m$  with $f_0 \in \mathcal{E}_{(S,j)}$ and 
$f_m = \liminf_{k \rightarrow \infty} \Pi^{V^m, H^m}_{j, k}$ with
$\Pi^{V^m, H^m}_{j, k} \in \mathcal{E}^+_{(S,j)}$ for all $k \geq j$ and $m \geq 1$. We say that $(K_{(S,j)})$ holds if the following inequality is valid
for any such function $g_n$ and any $n \geq 0$
\end{definition}

\vspace{-.1in}
\begin{equation}  \label{konigInequality}
\overline{I}_j g_n^+(S) \leq V^0(S)+ \ldots+ V^n(S)+ \overline{I}_jg_n^-(S).
\end{equation}
\begin{remark}
(\ref{konigInequality}) represents a version of the same named  property defined in \cite{bender1} (but originally introduced in \cite{konig}) but here adapted to our use of $\liminf$. In a setting without $liminf$ the defining inequality is equivalent to being an equality  (\cite{bender1})).
\end{remark}

\begin{definition}[$(K)-a.e.$] \label{definitionOfKa.e.}
For a fixed $j \geq 0$, we will write $(K_j)-a.e$ whenever $(K_{(S,j)})$ holds a.e. in $S$. We will also write $(K)-a.e$ whenever $(K_j)-a.e$ holds for all $j \geq 0$.
\end{definition}
Note that the case $j=0$ is somewhat special as there are only two cases, namely $(K_{(S,0)})$ holds for all $S$ or for no $S$. If $(L)$ is in force, then $\Se$ is a full set and, therefore, $(K)-a.e$ implies that $(K_{(S,0)})$ holds at the initial node.


\noindent
Our interest in $(K_{(S,j)})$ stems from the following proposition.
\begin{proposition} \label{konigAtFixedNode}
Let $(S,j)$ be an arbitrary node. Then:
\begin{enumerate}
\item If $(K_{(S,j)})$ holds then $(L_{(S,j)})$ holds.
\item $ (K_{(S,j)}) ~\mbox{holds if and only if}~ \overline{I}_j f(S) = \overline{\sigma}_j f(S)~\mbox{for any}~f \in P$.
\end{enumerate}
\end{proposition}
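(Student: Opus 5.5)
The plan is to work directly from the definitions of $\overline{I}_j$ and $\overline{\sigma}_j$ at the fixed node $(S,j)$. Two elementary facts do most of the work. First, $\overline{I}_j$ is countably subadditive and isotone on $P$. Second, $\overline{I}_j f_m(S)\le V^m$ whenever $f_m=\liminf_k \Pi^{V^m,H^m}_{j,k}$ with all $\Pi^{V^m,H^m}_{j,k}\in\mathcal{E}^+_{(S,j)}$, since such an $f_m$ is its own one-term cover. The key device is a tail estimate for the negative part of a partial sum $g_n=\sum_{m=0}^n f_m$. If some $h\ge 0$ satisfies $h\le \sum_{m\ge 0} f_m$ on $\Se_{(S,j)}$, then $g_n+\sum_{m>n}f_m\ge 0$ there, and since $f_m\ge 0$ for $m\ge 1$ this gives
\[
g_n^-\le \sum_{m>n} f_m \quad\mbox{on } \Se_{(S,j)}, \qquad\mbox{hence}\qquad \overline{I}_j g_n^-(S)\le \sum_{m>n}V^m .
\]
When $\sum_m V^m<\infty$ this tail tends to $0$ as $n\to\infty$.

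\emph{Item 1.} I will use the characterisation recalled after Definition \ref{lPropertyWithLimInf}: $(L_{(S,j)})$ holds iff $V\le\sum_{m\ge1}V^m$ whenever $f=\Pi^{V,H}_{j,n_f}\le\sum_{m\ge1}f_m$. Given such a cover with $\sum_{m\ge 1} V^m<\infty$ (otherwise there is nothing to prove), I take $f_0\equiv -f\in\mathcal{E}_{(S,j)}$, so $V^0=-V$, and keep the same $f_m$ for $m\ge1$. Then $h\equiv 0$ satisfies $h\le -f+\sum_{m\ge1}f_m=\sum_{m\ge0}f_m$, so the tail estimate applies. Applying (\ref{konigInequality}) and using $\overline{I}_j g_n^+\ge 0$ gives
\[
0\le -V+\sum_{m=1}^n V^m+\sum_{m>n}V^m=-V+\sum_{m\ge 1}V^m ,
\]
which is exactly $(L_{(S,j)})$.

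\emph{Item 2, ``if''.} Assume $\overline{I}_j=\overline{\sigma}_j$ on $P$ at $(S,j)$ and fix $g_n$. Take any cover $g_n^-\le\sum_{l\ge1}h_l$ by nonnegative $\liminf$-portfolios with initial values $W^l$. Since $g_n^+=g_n+g_n^-\le f_0+\sum_{m=1}^n f_m+\sum_l h_l$, this is an admissible cover in the definition of $\overline{\sigma}_j$. Hence $\overline{\sigma}_j g_n^+(S)\le \sum_{m=0}^nV^m+\sum_l W^l$. Taking the infimum over such covers and replacing $\overline{\sigma}_j g_n^+$ by $\overline{I}_j g_n^+$ yields (\ref{konigInequality}). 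Some care is needed with infinite values and the conventions $\infty+(-\infty)=\infty$, but these cases are trivial.

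\emph{Item 2, ``only if''.} The inequality $\overline{\sigma}_j\le\overline{I}_j$ on $P$ is immediate, since every $\overline{I}_j$-cover is a $\overline{\sigma}_j$-cover with $f_0=0$. For the reverse, let $f\in P$ and assume $\overline{\sigma}_j f(S)<\infty$. By item 1, $(L_{(S,j)})$ holds, so $\overline{\sigma}_j f(S)\ge 0$ is finite. Choose a cover $f\le\sum_{m\ge0}f_m$ with $\sum_m V^m<\overline{\sigma}_j f(S)+\varepsilon$, so the tail of $\sum_m V^m$ is finite. Because $f\ge0$,
\[
f\le \Big(\sum_{m\ge0}f_m\Big)^+\le g_n^+ +\sum_{m>n}f_m .
\]
Subadditivity of $\overline{I}_j$, then (\ref{konigInequality}), then the tail estimate with $h=f$ give
\[
\overline{I}_j f(S)\le \sum_{m=0}^nV^m+\overline{I}_j g_n^-(S)+\sum_{m>n}V^m\le \sum_{m\ge0}V^m+\sum_{m>n}V^m .
\]
Letting $n\to\infty$ and then $\varepsilon\to0$ gives $\overline{I}_j f(S)\le\overline{\sigma}_j f(S)$.

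The main obstacle I expect is controlling $\overline{I}_j g_n^-$. The operator $\overline{I}_j$ has no continuity from above, so $g_n^-\downarrow$ alone does not suffice. The pointwise domination of $g_n^-$ by the tail $\sum_{m>n} f_m$, which uses the nonnegativity of the $f_m$ for $m\ge1$ together with the covering inequality, is what replaces it. I would double-check this step carefully wherever $f_m$ may take the value $+\infty$.
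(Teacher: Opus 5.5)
Your proposal is correct and follows essentially the paper's route. In both, the key step is the pointwise domination $g_n^-\le\sum_{m>n}f_m$, which gives $\overline{I}_j g_n^-(S)\le\sum_{m>n}V^m\to 0$; it is combined with $f\le g_n^+ +\sum_{m>n}f_m$ in the ``only if'' direction and with $g_n^+=g_n+g_n^-$ and subadditivity in the ``if'' direction. The differences are cosmetic: you check $(L_{(S,j)})$ directly from its defining implication with $f_0=-f$ rather than via $\overline{\sigma}_j 0(S)\ge 0$ and Proposition 3.3 of \cite{bender3}, and your explicit concatenation of covers means you only need $\overline{I}_j=\overline{\sigma}_j$ for $g_n^+$.
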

\begin{proof}

Consider $0 \leq \sum_{m \geq 0} f_m$ on $\mathcal{S}_{(S,j)}$ where the $f_m$ are as in Definition \ref{definitionOfK}. It is enough to establish
$0 \leq \sum_{m \geq 0} V^m(S)$, therefore, we may assume $\sum_{m \geq 0} V^m(S) < \infty$.   

It follows that 
\begin{equation} \nonumber
0 \leq \overline{I}_j g_n^+(S) + \overline{I}_j(\sum_{m \geq n+1} f_m)(S) \leq V^0(S)+ \ldots+ V^n(S) +\overline{I}_j g_n^-(S)+ \overline{I}_j(\sum_{m \geq n+1} f_m)(S). 
\end{equation}
We also note that $g_n^- =(-g_n)^+ = (-\sum_{m=0}^n f_m)^+ \leq \sum_{m \geq n+1} f_m$ and so $\overline{I}_j g_n^-(S) \leq  \overline{I}_j (\sum_{m \geq n+1} f_m)(S)$.
Therefore 
\begin{equation} \nonumber
0 \leq V^0(S)+ \ldots+ V^n(S) + 2~\overline{I}_j (\sum_{m \geq n+1} f_m)(S). 
\end{equation}
Notice that $0 \leq \overline{I}_j (\sum_{m \geq n+1} f_m)(S) \leq \sum_{m \geq n+1} V^m(S)$ and the latter term converges to $0$ as $n \rightarrow \infty$ due to $\sum_{m  \geq 0} V^m(S) < \infty$. Therefore, $\lim_{n \rightarrow \infty} \overline{I}_j (\sum_{m \geq n+1} f_m)(S)=0$ it then follows that $0 \leq \sum_{m \geq 0} V^m(S)$. Hence, $\overline \sigma_j 0(S)\geq 0$, which is equivalent to the validity of $(L_{(S,j)})$ by Proposition 3.3 in \cite{bender3}.

To establish the direct implication in  item $(2)$ it is enough to prove $\overline{I}_j f(S) \leq \overline{\sigma}_j f(S)$, so we may concentrate in the case when $\overline{\sigma}_j f(S) < \infty$ holds as well. 
Consider $f_m$ as in Definition \ref{definitionOfK}  satisfying $f \leq \sum_{m \geq 0} f_m$ on $\mathcal{S}_{(S,j)}$. Therefore,  $f \leq g_n^++ \sum_{m \geq n+1} f_m$ on $\mathcal{S}_{(S,j)}$ where $g_n \equiv \sum_{m =0}^{n} f_m$. It then follows that
\begin{equation} \nonumber
\overline{I}_j f(S) \leq \overline{I}_j g_n^+(S) + \overline{I}_j (\sum_{m \geq n+1} f_m)(S) \leq V^0(S)+ \ldots+ V^n(S) + \overline{I}_j g_n^-(S) + \overline{I}_j (\sum_{m \geq n+1} f_m)(S). 
\end{equation}
As we argued above
\begin{equation} \nonumber
\overline{I}_j f(S) \leq V^0(S)+ \ldots+ V^n(S) + 2~\overline{I}_j (\sum_{m \geq n+1} f_m)(S),
\end{equation}
moreover, the last term in the above display converges to $0$ as $n \rightarrow \infty$ and so  $\overline{I}_j f(S) \leq \sum_{m \geq 0} V^m(S)$. It then follows that $\overline{I}_j f(S) \leq \overline{\sigma}_j f(S)$.

It remains to establish the converse implication in  item $(2)$ but,
$g_n^+ = g_n +g_n^-$ with $g_n = \sum_{m=0}^n f_m$ and $f_m$ as introduced in Definition \ref{definitionOfK}. Therefore $\overline{\sigma}_j g_n^+(S) \leq \overline{\sigma}_j(\sum_{m=0}^n f_m)(S)+\overline{\sigma}_j g_n^-(S) \leq \sum_{m=0}^n V^m(S)+\overline{\sigma}_j g_n^-(S)$.  Since $g_n^+,g_n^- \in P$, the assumed equality $\overline{I}_j=\overline{\sigma}_j$ on $P$ implies (\ref{konigInequality}), completing the proof. 
\end{proof}
The following result follows from Proposition \ref{konigAtFixedNode}.
\begin{corollary}
\begin{equation} \nonumber
(K)-a.e. ~\mbox{holds if and only if}~ \overline{I}_jf = \overline{\sigma}_jf~\mbox{holds a.e. for each}~ j \geq 0 ~\mbox{and for all}~ f \in P.
\end{equation}
\end{corollary}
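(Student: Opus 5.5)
The corollary follows from item 2 of Proposition~\ref{konigAtFixedNode}, applied nodewise and then assembled across $j$. I will unwind the definitions of $(K)$-a.e.\ and of ``a.e.'' on both sides and check that the exceptional null sets correspond. The only subtlety is the order of quantifiers: ``for all $f\in P$'' sits outside ``a.e.''\ on the right-hand side, while $(K_{(S,j)})$ is a single property of the node.

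For the direct implication, fix $j\ge 0$. By Definition~\ref{definitionOfKa.e.} there is an $\overline{I}$-null set $N_j$ such that $(K_{(S,j)})$ holds for every $S\in N_j^C$. For such $S$, Proposition~\ref{konigAtFixedNode}(2) gives $\overline{I}_j f(S)=\overline{\sigma}_j f(S)$ for every $f\in P$. Hence, for each $f\in P$, the equality $\overline{I}_j f=\overline{\sigma}_j f$ holds off $N_j$, and the exceptional set is even uniform in $f$.

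For the converse, fix $j$ and consider the exceptional set
\[
E_j=\{S:\ (K_{(S,j)}) \mbox{ fails}\}=\{S:\ \exists f\in P,\ \overline{I}_j f(S)\neq\overline{\sigma}_j f(S)\},
\]
where the two descriptions agree by Proposition~\ref{konigAtFixedNode}(2). We need $E_j$ to be null. Reading the right-hand side of the corollary literally, as ``for each $f$, off a null set $N_{j,f}$'', would require controlling an uncountable union $\bigcup_f N_{j,f}$, and this is the main obstacle.

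I would first try to read the statement with a null set independent of $f$, that is, ``$\overline{I}_j=\overline{\sigma}_j$ on $P$ holds a.e.''. Under that reading the equivalence is immediate from Proposition~\ref{konigAtFixedNode}. If a pointwise-in-$f$ reading is intended, the fallback is to reduce $(K_{(S,j)})$ to a single test. Inequality (\ref{konigInequality}) only involves the countable families $(V^m,H^m)$, so one would try to select, at each node, a witness $f^S\in P$ (such as $g_n^+$) for which equality fails. One would then need to paste these witnesses into one global $f$ using non-anticipativity, since values on distinct nodes $\Se_{(S,j)}$ are independent. With a single global $f$, the set $E_j$ is contained in the null set where $\overline{I}_j f\neq\overline{\sigma}_j f$. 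The pasting step, which requires the witnesses to be defined compatibly across nodes at level $j$, is the delicate point. Once $E_j$ is null for every $j$, Definition~\ref{definitionOfKa.e.} gives $(K)$-a.e.
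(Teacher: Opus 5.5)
Your direct implication, and your converse under the reading with an $f$-independent null set, are exactly the paper's argument. The paper's entire proof is the remark that the corollary follows from Proposition~\ref{konigAtFixedNode}, i.e.\ item~2 applied node by node.

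The step you leave open for the literal reading (a separate null set for each $f\in P$) is not delicate, and the paper does not spell it out. It needs neither explicit witnesses such as $g_n^+$ nor non-anticipativity. Fix $j$ and use the following facts:
\begin{itemize}
\item The level-$j$ nodes $\mathcal{S}_{(S,j)}$ partition $\mathcal{S}$, and $E_j$ is a union of such blocks.
\item $\overline{I}_j f(S)$ and $\overline{\sigma}_j f(S)$ depend only on the restriction $f|_{\mathcal{S}_{(S,j)}}$, since the covering inequalities in their definitions are imposed only on that node. Both are constant there.
\item Elements of $P$ are arbitrary maps $\mathcal{S}\to[0,\infty]$, with no adaptedness or measurability constraint.
\end{itemize}

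For each block $B\subseteq E_j$, Proposition~\ref{konigAtFixedNode}(2) provides some $f^B\in P$ with $\overline{\sigma}_j f^B\neq\overline{I}_j f^B$ on $B$; choose one such $f^B$ per block (axiom of choice). Define $f=f^B$ on each such $B$ and $f=0$ on $\mathcal{S}\setminus E_j$. Distinct blocks are disjoint, so there is nothing to make compatible, and $f\in P$. Since $f=f^B$ on $B$, you get $\overline{\sigma}_j f=\overline{\sigma}_j f^B\neq\overline{I}_j f^B=\overline{I}_j f$ on $B$.

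Hence $E_j\subseteq\{\overline{I}_j f\neq\overline{\sigma}_j f\}$, which is null by hypothesis. Then $E_j$ is null because $\overline{I}$ is isotone: $\mathbf{1}_{E_j}$ is dominated by the indicator of a null set. Doing this for every $j$ gives $(K)$-a.e. So the two readings of the statement are equivalent, and your proof is complete under either.
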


The above results provide a sense for the meaning of $(K)-a.e.$; we will establish this property by requiring some weak sufficient conditions introduced next. We first need some definitions from \cite{bender3}[Section 3]. 

\begin{definition}[Types of nodes] \label{typesOfNodes} 
Given a trajectory space $\Se$ and a node $(S,j)$:
	\begin{itemize}
		\item $(S,j)$ is called an \emph{up-down node} if
		\begin{equation} \label{upDownProperty}
			\sup_{\tilde{S} \in \Se_{(S, j)}}~~ (\tilde{S}_{j+1} - S_{j}) >    0\quad \mbox{and}\quad
			\inf_{\tilde{S} \in \Se_{(S, j)}} ~~(\tilde{S}_{j+1} - S_{j}) <    0.
		\end{equation}
		\item $(S,j)$ is called a \emph{flat node} if
		\begin{equation} \label{flat}
			\sup_{\tilde{S} \in \Se_{(S, j)}}~~ (\tilde{S}_{j+1} - S_{j}) =    0 =
			\inf_{\tilde{S} \in \Se_{(S, j)}} ~~(\tilde{S}_{j+1} - S_{j}).
		\end{equation}
	\end{itemize}
	\noindent $(S, j)$ is called an arbitrage-free node if (\ref{upDownProperty}) or (\ref{flat}) hold, otherwise it
	is called an \emph{arbitrage node}. An arbitrage node $(S, j)$ is said to be of \emph{type I}, if there exists $\hat{S}\in\Se_{(S, j)}$	such that $\hat{S}_{j+1} = S_j$; otherwise it is said to be of \emph{type II}.
\end{definition}

\begin{definition}[Bad nodes]
	For a fixed node $(S,j)$, let
	\begin{equation}\label{eq:set_bad}
		N(S,j)=\{\tilde S\in \Se_{(S,j)}:\; (\tilde S,k) \mbox{ is arbitrage node and } \tilde S_{k+1}\neq \tilde S_k \mbox{ for some } k\geq j\}.
	\end{equation} 
	A node $(S,j)$ is called \emph{bad}, if $\mathcal{S}_{(S,j)}= N(S,j)$. Otherwise, $(S,j)$ is said to be \emph{good}.
\end{definition}

\begin{definition}[Trajectorial completeness]\label{def:TC}
	Suppose $(S^n)_{n\ge 0}$ is a sequence in $\mathcal{S}$ satisfying
	\begin{equation}\label{splittingSequence}
	S^n_i = S^{n+1}_i, \;\; 0 \leq i \leq n,
	\end{equation}
	for all $n\in \mathbb{N}_0$. Then, its \emph{limit} is defined as
	$$
	\lim_{n \rightarrow \infty} S^n\equiv \overline{S}\equiv (\overline{S}_i)_{i \geq 0},~~\mbox{wherein}~~~\overline{S}_i \equiv S^i_i.
	$$
	Denote by $\overline{\Se}$ be the set of all such limits $\overline{S}$. Then, $\overline{\Se}$ is called the \emph{trajectorial completion} of $\Se$ and  the trajectory set $\Se$ is said to be  \emph{trajectorially complete}, (TC) for short, if $\Se=\overline{\Se}$.
\end{definition}

The following definition introduces two conditions, (TC$_{\mbox{bad}}$) and (P$_{\mbox{bad}}$),  that we will require in order to prove $(K)-a.e.$. \cite{bender3}[Proposition 7.2] shows them to be sufficient conditions for Doob's pointwise supermartingale convergence theorem (Theorem 7.1 in the said reference).
 
\begin{definition}[Sufficient conditions for $(K)-a.e.$] 
The following two properties are relative to $\mathcal{S}$.
\begin{itemize}[labelwidth=!, leftmargin=*, align=left]
\item[(TC$_{\mbox{bad}}$)] If  $(S^n)_{n\ge 0}$  is a sequence in $\mathcal{S}$ satisfying \eqref{splittingSequence}
and if there is an $n_0\geq 0$ such that $(S^n,n)$ is a good node for every  $n\geq n_0$, then $\lim_{n \rightarrow \infty} S^n~\in \mathcal{S}$.\\

\item[(P$_{\mbox{bad}}$)]  Whenever $(S,k)$ is a good up-down node such that $(S,k+1)$ is bad, then there are $S^1, S^2\in \Se_{(S,k)}$ such that $S^1_{k+1}> S_{k+1}> S^2_{k+1}$ and $(S^n,j+1)$ are good for $n =1,2$.\\

\end{itemize}
\end{definition}

\begin{definition}[$(L)-\overline{I}_j~a.e.$]
For a given node $(S,j)$, the following two properties will be referred  as the property $(L)-\overline{I}_j~a.e.$ on $\mathcal{S}_{(S,j)}$,
 \begin{enumerate}
     \item $(L_{(S,j)})$ holds,
     \item $\mathcal{N}^{(L_{(S,j)})} \equiv \{\hat{S} \in \mathcal{S}_{(S,j)}: \exists~k~\geq j~ \mbox{s.t.}~ (L_{(\hat{S}, k)}) ~\mbox{fails}\}$~is~a ~$\overline{I}_j$-null set (in particular $(L_{(\hat{S}, k)})$~holds~$\overline{I}_j-a.e.$~ for~ every~ $k \geq j$). 
  \end{enumerate}
\end{definition}

\begin{proposition} \label{prop:locaL-ae}
Fix  $j$, $j \geq 0$, and assume that  $(S,j)$ is  a good node. Moreover, assume that  (TC$_{\mbox{bad}}$) and  (P$_{\mbox{bad}}$) both hold. Then, $(L)-\overline{I}_j~a.e.$ holds; also,  property $(P)$ holds (as per Definition \ref{def:Positivity}) when restricted to $\mathcal{S}_{(S,j)}$
\end{proposition}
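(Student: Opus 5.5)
The plan is to reduce everything to the unconditional results of \cite{bender3}, applied to the conditional trajectory set $\mathcal{S}' \equiv \mathcal{S}_{(S,j)}$ regarded as a trajectory space in its own right, with time shifted so that $j$ plays the role of $0$. The key observation is that for $\hat S\in\mathcal{S}'$ and $k\ge j$ we have $\mathcal{S}'_{(\hat S,k)}=\mathcal{S}_{(\hat S,k)}$. Hence the operators $\overline{\sigma}_k,\overline{I}_k$ computed relative to $\mathcal{S}'$ coincide with those relative to $\mathcal{S}$ at every node $(\hat S,k)$, $k\ge j$. Likewise, null sets of $\overline{I}$ relative to $\mathcal{S}'$ (time shifted) are exactly the $\overline{I}_j$-null subsets of $\mathcal{S}_{(S,j)}$. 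The node types (up-down, flat, arbitrage), the sets $N(\hat S,k)$ and hence goodness and badness, and the validity of each $(L_{(\hat S,k)})$ all depend only on $\mathcal{S}_{(\hat S,k)}$. They are therefore identical whether computed in $\mathcal{S}$ or in $\mathcal{S}'$. Consequently $(L)-\overline{I}_j$~a.e. on $\mathcal{S}_{(S,j)}$ is literally property $(L)$-a.e. (Definition \ref{assumptionL-ae}) for the shifted space $\mathcal{S}'$.

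The first step is to check that the hypotheses transfer. (P$_{\mbox{bad}}$) is a nodewise statement about nodes $(\hat S,k)$ and their successors, so it restricts to $\mathcal{S}'$ verbatim for $k\ge j$. For (TC$_{\mbox{bad}}$), take a splitting sequence in $\mathcal{S}'$ (after the time shift) with good nodes eventually. It is also a splitting sequence in $\mathcal{S}$, once indices $<j$ are padded by the common initial segment, and it still has eventually good nodes. Its limit therefore lies in $\mathcal{S}$, and since it agrees with $S$ up to time $j$, it lies in $\mathcal{S}'$. Moreover, since $(S,j)$ is good, the initial node of $\mathcal{S}'$ is good.

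The second step is to invoke the result from \cite{bender3}[Section 3 and Proposition 7.2], which shows that (TC$_{\mbox{bad}}$) and (P$_{\mbox{bad}}$) imply $(L)$-a.e. and $(P)$ whenever the initial node is good. I expect the relevant statement there to be that every good node satisfies $(L)$ and that the bad-node set $\{\hat S: (\hat S,k)\ \text{bad for some }k\}$ is null. Applied to $\mathcal{S}'$, this gives $(L_{(S,j)})$ from goodness of $(S,j)$. It also gives that $\mathcal{N}^{(L_{(S,j)})}$ is contained in the set of trajectories that eventually pass through a bad node. The $\overline{I}_j$-nullity of that set is obtained by a superhedging argument: at a bad node every continuation is an arbitrage move, so one can build nonnegative portfolios (started at arbitrarily small capital, and summed countably over the first bad time using Lemma \ref{st-portfolio}) whose $\liminf$ blows up on the bad set.

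For $(P)$ restricted to $\mathcal{S}_{(S,j)}$, take an up-down node $(\hat S,k)$, $k\ge j$, with $(L_{(\hat S,k)})$ holding and $(L_{(\hat S,k+1)})$ failing. The failure forces $(\hat S,k+1)$ to be bad, since good nodes satisfy $(L)$. If $(\hat S,k)$ is good, (P$_{\mbox{bad}}$) supplies the required $S^1,S^2$ with good, hence $(L)$, successor nodes. The main obstacle I expect is the remaining case, where $(\hat S,k)$ is bad yet $(L_{(\hat S,k)})$ holds. Here I would try to show, using that $(\hat S,k)$ is up-down (so not itself an arbitrage node) together with the definition of $N(\hat S,k)$, that badness of $(\hat S,k)$ forces all successors at $k+1$ to be bad. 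Then $(L_{(\hat S,k)})$ would fail via the same arbitrage superhedge, a contradiction. Making this nullity and superhedging construction rigorous at the conditional level is the delicate point; I would first try to cite it directly from \cite{bender3}[Corollaries 3.13, 3.14] through the shift identification above.
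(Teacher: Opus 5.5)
Your proposal is correct and is essentially the paper's proof. The paper also observes that (TC$_{\mbox{bad}}$) and (P$_{\mbox{bad}}$) pass to the conditional trajectory set $\mathcal{S}_{(S,j)}$, reruns Proposition 7.2 of \cite{bender3} on that set, and uses goodness of $(S,j)$ to apply the relevant Section 3 corollary of \cite{bender3} with $\mathcal{S}_{(S,j)}$ in place of $\mathcal{S}$; your padding argument for (TC$_{\mbox{bad}}$) and your identification of $\overline{I}_j$-null subsets of $\mathcal{S}_{(S,j)}$ with null sets of the shifted space just make explicit what the paper asserts ``by their form.''

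The ``remaining case'' you flag for $(P)$ cannot occur, because a bad node $(\hat S,k)$ never satisfies $(L_{(\hat S,k)})$. This is Corollary 3.13 of \cite{bender3}, and it can also be seen directly: holding $m$ units in the arbitrage direction at every arbitrage node, starting from zero capital, and summing over $m\ge 1$ superhedges $+\infty$ on all of $\mathcal{S}_{(\hat S,k)}$. So no detour through the successor nodes is needed.
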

\begin{proof}
We note that (TC$_{\mbox{bad}}$) and (P$_{\mbox{bad}}$) are properties that hold on all $\mathcal{S}$ but they also hold, by their form, when restricted to an arbitrary conditional set $\mathcal{S}_{(S,j)}$. With this comment in mind, a proof of our proposition follows exactly the arguments in Proposition 7.2 of \cite{bender3} but now deployed on the trajectory set $\mathcal{S}_{(S,j)}$ (as opposed to the set $\mathcal{S}$). Note that the assumption that $(S,j)$ is a good node is needed for invoking  Corollary 3.3 in \cite{bender3} with $\Se_{(S,j)}$ in place of $\Se$.

\end{proof}

\begin{theorem} \label{nodewiseImplicationOfK}
Assume the same hypotheses as in Proposition \ref{prop:locaL-ae} then,
\[
\overline I_j f(S)=\overline\sigma_j f(S), \qquad f\in P.
\]
Consequently, by Proposition~\ref{konigAtFixedNode}, property $(K_{(S,j)})$
holds.
\end{theorem}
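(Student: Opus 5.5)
We need to show $\overline I_j f(S)\le \overline\sigma_j f(S)$ for $f\in P$; the reverse inequality $\overline\sigma_j f\le \overline I_j f$ on $P$ is already noted at the start of this section. The plan is to take any superhedge admissible for $\overline\sigma_j$ and convert it into one admissible for $\overline I_j$ without increasing the initial cost, up to an arbitrary $\varepsilon$. The only obstruction is the elementary term $f_0=\Pi^{V^0,H^0}_{j,n_0}$, which may take negative values, while $\overline I_j$ only allows non-negative portfolio wealth.

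Fix $f\in P$ with $\overline\sigma_j f(S)<\infty$, and take $f\le \sum_{m\ge0} f_m$ on $\Se_{(S,j)}$ with $\sum_m V^m\le \overline\sigma_j f(S)+\varepsilon$.

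\textbf{Step 1 (a supermartingale).} Work on $\Se_{(S,j)}$. By Proposition~\ref{prop:locaL-ae}, $(L)$-$\overline I_j$ a.e.\ and property $(P)$ hold there. Define $g\equiv f_0+\sum_{m\ge1} f_m\ge f\ge 0$ and the process
\[
h_k\equiv \overline\sigma_k g,\qquad k\ge j,
\]
which is a supermartingale, since $\overline\sigma_k h_{k+1}\le h_k$ by the tower inequality of Theorem~\ref{mainDirectionOfTowerProperty s-t}. Its values are non-negative wherever $(L_{(\cdot,k)})$ holds, because $g\ge0$ and $(L)$ makes $\overline\sigma_k$ preserve non-negativity. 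Moreover $h_k\le \Pi^{V^0,H^0}_{j,k\wedge n_0}+\sum_{m\ge1}\overline\sigma_k f_m\le \Pi^{V^0,H^0}_{j,k}+\sum_{m\ge1}\Pi^{V^m,H^m}_{j,k}$, so $h$ is real-valued (after passing to a version on a null set) and satisfies $h_j(S)\le\sum_m V^m$.

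\textbf{Step 2 (decomposition and positivity).} Apply Theorem~\ref{thm:decomposition} on the node $(S,j)$ to $h$, with $\sum_k\delta_k<\varepsilon$. This gives
\[
h_k\le h_j(S)+\varepsilon+\sum_{i=j}^{k-1}G_i\Delta_i\tilde S
\]
off an $\overline I_j$-null set $N$. By Lemma 7.3 of \cite{bender3}, which is available thanks to $(P)$ on $\Se_{(S,j)}$, the wealth $\Pi^{h_j(S)+\varepsilon,G}_{j,k}$ is non-negative for all $k$ and all paths. Passing to $\liminf_k$ and using $\liminf_k h_k\ge g\ge f$ off a null set (Doob-type pointwise convergence from \cite{bender3}, or directly from $h_k\ge \overline\sigma_k f$ and the structure of the $f_m$) gives
\[
f\le \liminf_k \Pi^{h_j(S)+\varepsilon,G}_{j,k}+\infty\,\mathbf 1_{N'},
\]
where $N'$ is an $\overline I_j$-null set.

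\textbf{Step 3 (conclusion).} Countable subadditivity of $\overline I_j$ then yields
\[
\overline I_j f(S)\le h_j(S)+\varepsilon\le \overline\sigma_j f(S)+2\varepsilon.
\]
Letting $\varepsilon\downarrow0$ gives $\overline I_j f(S)\le\overline\sigma_j f(S)$, and together with the reverse inequality the claimed equality holds; $(K_{(S,j)})$ then follows from Proposition~\ref{konigAtFixedNode}.

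The main obstacle is the terminal inequality $\liminf_k h_k\ge f$ off a null set. If this fails, the fallback is to avoid $h$ altogether and apply the decomposition and Lemma 7.3 directly to the elementary supermartingale $k\mapsto \Pi^{V^0,H^0}_{j,k\wedge n_0}+\sum_{m\ge1}\overline\sigma_k f_m$. For this process the terminal bound by $\sum_m f_m\ge f$ is immediate, and positivity comes from $f\ge0$ combined with $(L)$ a.e.
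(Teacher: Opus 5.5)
Your closing argument is exactly how the paper finishes: apply the supermartingale decomposition on $\Se_{(S,j)}$, use $(P)$ and Lemma 7.3 of \cite{bender3} to make the resulting single portfolio non-negative, then apply $\overline{I}_j$. The gap is in the choice of supermartingale. With $h_k=\overline{\sigma}_k g$ you need $\liminf_k \overline{\sigma}_k g\ge g$ off an $\overline{I}_j$-null set. That is a trajectorial L\'evy upward (zero--one) law, and it is neither proved in the paper nor among the results of \cite{bender3} that are used. None of the tools you invoke gives it:
\begin{itemize}
\item the tower inequality only yields $\overline{\sigma}_k h_{k+1}\le h_k$;
\item Doob's convergence theorem gives existence of $\lim_k h_k$ a.e.\ but says nothing about how the limit compares with $g$;
\item the only pointwise information available, $\overline{\sigma}_k f_m\le \Pi^{V^m,H^m}_{j,k}$, is an upper bound, whereas you need a lower bound. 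In general $\overline{\sigma}_k f_m(\tilde S)$ can be strictly below the current wealth $\Pi^{V^m,H^m}_{j,k}(\tilde S)$.
\end{itemize}
Using $h_k\ge\overline{\sigma}_k f$ just moves the problem to $\liminf_k\overline{\sigma}_k f\ge f$. Your fallback has the same defect. For $k\mapsto\Pi^{V^0,H^0}_{j,k\wedge n_0}+\sum_{m\ge1}\overline{\sigma}_k f_m$, the terminal bound requires $\liminf_k\overline{\sigma}_k f_m\ge f_m$. That is again the L\'evy statement, not something immediate, and this process is not elementary.

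The missing idea is to use the wealth processes themselves, not their conditional outer integrals. The paper works with $g^0_n+g_n=\sum_{m\ge0}\Pi^{V^m,H^m}_{j,n}$. For this process, $f\le\liminf_n(g^0_n+g_n)$ is just Fatou's lemma for series of non-negative terms. The cost is that the aggregated process need not be finite or non-negative. Your Step 1 glosses over the same issue: $\sum_{m}\Pi^{V^m,H^m}_{j,k}$ can be $+\infty$, and $\overline{\sigma}_k g$ need not be finite or non-negative wherever $(L_{(\tilde S,k)})$ fails. ``Passing to a version'' would also have to preserve the supermartingale inequality. The paper handles this in three steps:
\begin{itemize}
\item non-negativity comes from $\overline{\sigma}_n f\le g^0_n+g_n$ together with $(L_{(\hat S,n)})$;
\item finiteness comes from the Aggregation Lemma 6.4 of \cite{bender3}, which makes $\sum_m H^m_k\Delta_k\hat S$ converge at up-down nodes and flat moves;
\item the process is killed at the stopping time $\tau^{\#}$, the first failure of $(L)$ or first non-trivial move out of a type I arbitrage node. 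The event $\{\tau^{\#}<\infty\}$ is $\overline{I}_j$-null by Lemma A.1.3 of \cite{bender3} and $(L)$-$\overline{I}_j$ a.e.
\end{itemize}
Only for this stopped process $\tilde g_n$ is the decomposition invoked, and then $\tilde g_j(S)=\sum_m V^m$ because $(S,j)$ is good.
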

\begin{proof}
We only need to establish $\overline{I}_j f(S) \leq \overline{\sigma}_j f(S)$ for an arbitrary $f \in P$, we may then assume $\overline{\sigma}_j f(S) < \infty$.

Fix $f \in P$ and consider the following inequality valid on $\mathcal{S}_{(S,j)}$ 
\begin{equation} \nonumber
f \leq \Pi_{j, n_0}^{V^0,H^0}+ \sum_{m \geq 1} \liminf_{n \rightarrow \infty} \Pi_{j, n}^{V^m, H^m},    
\end{equation}
where $\Pi_{j, n_0}^{V^0,H^0}\!\!\in \mathcal{E}_j$ and, for every  $m\geq 1$, $\Pi_{j, n}^{V^m, H^m}\!\!\in \mathcal{E}^+_j$ for every $n\geq j$ and $\sum_{m=1}^\infty V^m(S)<\infty$.

Given an arbitrary  $\hat{S} \in \mathcal{S}_{(S,j)}$, set the notation  $g_n^m(\hat{S}) \equiv V^m(S)+ \sum_{k=j}^{n-1} H_k^m(\hat{S}) \Delta_k \hat{S}$, for all $m \geq 0, ~n \geq j$; notice that, whenever $ m \geq 1$,  $g_n^m(\hat{S}) \geq 0$  for all $n \geq j$ and define
\begin{equation} \nonumber
    g_n(\hat{S})= \sum_{m\geq 1} g^m_n(\hat{S}). 
\end{equation}
It follows from Fatou's lemma that the following inequality holds on $\mathcal{S}_{(S,j)}$
    \begin{equation} \label{aggregationGivesASupermartingalea.e.}
    f \leq \Pi_{j, n_0}^{V^0,H^0}+ \liminf_{n \rightarrow \infty}~\sum_{m \geq 1} g^m_n= \Pi_{j, n_0}^{V^0,H^0} + \liminf_{n \rightarrow \infty}~g_n=  \liminf_{n \rightarrow \infty}~(g_n^0+ g_n).
    \end{equation}

    Taking $n \geq j$, we observe that, for any $\hat{S} \in \mathcal{S}_{(S,j)}$,
    \begin{equation} \nonumber
        f(\hat{S}) \leq g_n^0(\hat{S}) +\sum_{k=n}^{n_0-1} H_k^0(\hat{S}) \Delta_k \hat{S}+ \sum_{m \geq 1}[g_n^{m}(\hat{S}) + \liminf_{p \rightarrow \infty} \sum_{k=n}^{p-1} H_k^m(\hat{S}) \Delta_k \hat{S}],
    \end{equation}
    from which it follows that for any $\hat{S} \in \mathcal{S}_{(S,j)}$ and any $n \geq j$
    \begin{equation}  \label{givesPositivityOnlya.e.}
        \overline{\sigma}_n f (\hat{S}) \leq g_n^0(\hat{S})+ g_n(\hat{S}).
    \end{equation}
We will consider the following stopping time defined on $\mathcal{S}_{(S,j)}$, for each $\hat{S} \in \mathcal{S}_{(S,j)}$ define:
    \begin{eqnarray*}
	\tau^{\#}(\hat{S})= \inf \{k\ge j:& (L_{(\hat{S},k)})\; \mbox{fails, or}\; [(\hat{S},k-1)\; \mbox{is a type I arbitrage node (see Definition \ref{typesOfNodes}) and}\; \hat{S}_k\neq \hat{S}_{k-1}]\}
	\end{eqnarray*}
where the second alternative is only relevant for $k>j.$ 	
Define, for $n \geq j$,
\begin{equation} \label{stoppedDefinition}
    \tilde{g}_n= (g_n^0+ g_n)~{\bf 1}_{\{n< \tau^{\#} \}},
\end{equation}
we argue next that $\{  \tilde{g}_n\}_{n \geq j }$ is a $[0, \infty)$ valued supermartingale. We note, in passing, that the need to consider $\tilde{g}_n$, instead of $g^0_n+g_n$, emerges as the latter could take the value $\infty$ while the former takes finite values as we show next (finite values are required whenever relying on the supermartingale decomposition, as we do below).

To check $ \tilde{g}_n\geq 0$ on $\mathcal{S}_{(S,j)}$ we need only consider the case $n < \tau^{\#}(\hat{S})$ but this implies that $(L_{(\hat{S},n)})$ holds and so $\overline{\sigma}_n f(\hat{S}) \geq 0$ and then $ \tilde{g}_n(\hat{S}) \geq 0$ follows from (\ref{givesPositivityOnlya.e.}).

Next, we establish that $\tilde{g}_n< \infty $ on $\mathcal{S}_{(S,j)}$. We need to check that $\sum_{m=0}^{\infty} \Pi_{j, n}^{V^m, H^m}(\hat{S})$  is finite if $n<\tau^{\#}(\hat{S})$. In this case, for every $j \leq k <n$, the node $(\hat{S},k)$ is an up-down node or $\hat{S}_{k+1}= \hat{S}_k$. Indeed, if $(\hat{S},k)$ were an arbitrage node of type II, then, by Proposition 3.10 from (\cite{bender3}), $(L_{(\hat{S},k)})$ fails, which results in $\tau^{\#}(\hat{S})\leq k < n$; a contradiction. Similarly, we arrive at a contradiction, if  $(\hat{S},k)$ were an arbitrage node of type I and $\hat{S}_{k+1}\neq \hat{S}_k$, because, then, $\tau^{\#}(\hat{S})\leq k+1\leq n$. Thus, we may apply the Aggregation Lemma 6.4 from \cite{bender3} to conclude that, for every $j\leq k < n$, the series $\sum_{m=0}^{\infty} (H^m_n(\hat{S})\Delta_k \hat{S})$,   converges in $\mathbb{R}$. Consequently
$$
\sum_{m=0}^\infty \Pi_{j, n}^{V^m, H^m}(\hat{S})=\sum_{m=0}^\infty V^m(S) +\sum_{k=j}^{n-1} \sum_{m=0}^\infty (H^m_k(\hat{S})\Delta_k \hat{S}) \in \mathbb{R}.
$$

\vspace{.1in}
We will now establish that $\{\tilde{g}\}_{n \geq j}$ is a supermartingale on $\mathcal{S}_{(S,j)}$, i.e. we will prove that $\overline{\sigma}_n \tilde{g}_{n+1} \leq \tilde{g}_n$ $\overline{I}_j-a.e.$ for all $ n\geq j$. Let $B$ denote the set of trajectories $\hat{S} \in \mathcal{S}_{(S,j)}$  such that  $g_n(\hat{S}) < \infty$; on $B$ we have 
\begin{equation}  \label{needGjFinite}
    g_{n+1}(\hat{S})= g_{n}(\hat{S})   + \sum_{m \geq 1} H^m_n(\hat{S}) \Delta_n \hat{S},
\end{equation}
with, possibly, the last term being equal to $\infty$ (and so, also the left hand side). For all $\hat{S} \in B$ we obtain
\begin{equation}  \label{stillWithInfinities}
    \tilde{g}_{n+1}(\hat{S} ) \leq \tilde{g}_{n}(\hat{S} ) + H_n(\hat{S}) \Delta_n \hat{S}  +\infty ~{\bf 1}_{\{\tau^{\#} < \infty \}}(\hat{S} )
\end{equation}
where $H_n\equiv H^0_n+\sum_{m \geq 1} H^m_n$ on $\{n< \tau^{\#} \}$  and $H_n \equiv 0$ otherwise. The series defining $H_n$ indeed converges, this fact follows the same argument (provided above) that showed that $g_n(\hat{S} ) < \infty$ whenever $n< \tau^{\#}(\hat{S})$. It also follows that $H_n$ is non-anticipative.
To check (\ref{stillWithInfinities}) for $\hat{S}  \in B$, we use: ${\bf 1}_{\{n+1 < \tau^{\#} \}}(\hat{S})= {\bf 1}_{\{n< \tau^{\#} \}}(\hat{S})- {\bf 1}_{\{n+1 = \tau^{\#} \}}(\hat{S})$
and (\ref{needGjFinite}) to expand $ \tilde{g}_{n+1}(\hat{S})$ in (\ref{stoppedDefinition}). The terms, resulting from the expansion and containing ${\bf 1}_{\{n+1 = \tau^{\#} \}}$ are absorbed in the term
$\infty ~{\bf 1}_{\{\tau^{\#} < \infty \}}$ given that
$\{n+1 = \tau^{\#} \} \subseteq \{\tau^{\#} < \infty \}$
where the latter is a null set (see below).

We now argue that (\ref{stillWithInfinities}) actually holds in all of $\mathcal{S}_{(S,j)}$, towards this end consider $\hat{S} \in \mathcal{S}_{(S,j)} \setminus B$. Assuming  that ${\bf 1}_{\{\tau^{\#} > n \}}(\hat{S})=1$, we can conclude that  $\tilde{g}_{n}(\hat{S}) = \infty$, which contradicts the finiteness of $\tilde g_n$. If, however, 
${\bf 1}_{\{\tau^{\#} \leq  n \}}(\hat{S})=1$ we get ${\bf 1}_{\{\tau^{\#} < \infty \}}(\hat{S})=1$, and  (\ref{stillWithInfinities}) trivially holds.

$\{\tau^{\#} < \infty \}$ is a $\overline{I}_j$-null subset of $\mathcal{S}_{(S,j)}$, this follows from Lemma A.1.3 from \cite{bender3} and by the validity of $(L)-\overline{I}_j~a.e.$ (which is available by Proposition \ref{prop:locaL-ae}). Therefore,
from (\ref{stillWithInfinities}), being valid on $\mathcal{S}_{(S,j)}$, and after acting with $\overline{\sigma}_n$ we obtain $\overline{\sigma}_n \tilde{g}_{n+1} \leq \tilde{g}_n+ \overline{I}_n( \infty ~{\bf 1}_{\{\tau^{\#} < \infty \}})$. From the tower property  in Corollary \ref{towerPropertyForI-bar}  and the fact that $\{\tau^{\#} < \infty \}$ is $\overline{I}_j$-null we obtain 
$\overline{I}_j(\overline{I}_n(\infty ~{\bf 1}_{\{\tau^{\#} < \infty \}})) \leq \overline{I}_j(\infty ~{\bf 1}_{\{\tau^{\#} < \infty \}})=0$ and so $\overline{I}_n(\infty~{\bf 1}_{\{\tau^{\#} < \infty \}})=0 ~\overline{I}_j-a.e.$ It then follows that $\overline{\sigma}_n \tilde{g}_{n+1} \leq \tilde{g}_n$ $\overline{I}_j-a.e.$ on $\mathcal{S}_{(S,j)}$ for all $n \geq j$.

Also,  $\{\tau^{\#} < \infty \}$ being a $\overline{I}_j$-null subset of $\mathcal{S}_{(S, j)}$ shows that 
\begin{equation} \label{almostEqual} 
\tilde{g}_n = (g_n^0+g_n),~ \overline{I}_j-a.e.~\mbox{on}~\mathcal{S}_{(S, j)}
\end{equation}
 (and so the latter is a supermartingale on $\mathcal{S}_{(S, j)}$ as well).  Therefore, from (\ref{aggregationGivesASupermartingalea.e.}) and (\ref{almostEqual}),  we obtain that the following inequality is valid on $\mathcal{S}_{(S,j)}$
\begin{equation} \label{singleSupermartingaleBound}
f \leq   \liminf_{n \rightarrow \infty}~\tilde{g}_n+ \infty~{\bf 1}_{\mathcal{M}},
\end{equation}
where  $\mathcal{M}$ is a $\overline{I}_j$-null set.
We now apply the supermartingale decomposition Theorem \ref{thm:decomposition}
to the supermartingale $\{\tilde{g}_n\}_{n \geq j}$ on $\mathcal{S}_{(S,j)}$ (i.e. $\mathcal{S}_{(S,j)}$ is taking over the role of $\mathcal{S}$ in the said theorem). Theorem \ref{thm:decomposition} is applicable given that $(L)-\overline{I}_j~a.e.$ (which is available by Proposition \ref{prop:locaL-ae})
and the functions $\{\tilde{g}_n\}_{n \geq j}$ are real-valued.
The supermartingale decomposition in  conjunction with (\ref{singleSupermartingaleBound}), gives that the following inequality is valid on $\mathcal{S}_{(S,j)}$
\begin{equation} \label{backToMartingaleBound}
f \leq \tilde{g}_j+  \liminf_{n \rightarrow \infty}~\Pi^{0, H}_{j, n}+ \delta+\infty~{\bf 1}_{\tilde{\mathcal{M}}},
\end{equation}
where  $\tilde{\mathcal{M}}$ is a $\overline{I}_j$-null set.
Property $(P)$  on $\mathcal{S}_{(S,j)}$ (which is available by Proposition \ref{prop:locaL-ae}) and Lemma 7.3 from \cite{bender3} imply 
$\tilde{g}_j +\delta+ \sum_{k=j}^{n-1} H_k(\hat{S}) ~\Delta_k \hat{S} \geq 0~~\mbox{for all}~~\hat{S} \in \mathcal{S}_{(S,j)} ~\mbox{and}~ n \geq j$. We can then apply $\overline{I}_j$ to (\ref{backToMartingaleBound}) and obtain
\begin{equation} \label{almostThere2}
 \overline{I}_j f(S) \leq \tilde{g}_j(S) + \delta,   
\end{equation}
note that $\tilde{g}_j(S)  = (V^0(S) + g_j)~ {\bf 1}_{\{j < \tau^{\#}\}}= V^0(S) + \sum_{m \geq 1} V^m(S)$ given that $(L_{(S,j)})$ holds (the assumption that $(S,j)$ is a good node implies that $(L_{(S,j)})$ holds, this follows from Corollary 3.13 from \cite{bender3}, the applicability of the said corollary follows from our hypotheses as argued in  Proposition 7.2 of \cite{bender3})
and so $\{j < \tau^{\#}\} = \mathcal{S}_{(S,j)}$. But, $V^0 + \sum_{m \geq 1} V^m$  could have been taken arbitrarily close to $\overline{\sigma}_jf(S)$. This remark jointly with (\ref{almostThere2}) and the fact that $\delta$ is arbitrary completes the proof. 
\end{proof}

\begin{corollary}[Validity of $(K)-a.e.$]
Assume that $(TC_{bad})$ and $(P_{bad})$  hold and that $(S,0)$ is a good node. Then $(K)-a.e.$ holds  
\end{corollary}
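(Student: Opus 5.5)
The plan is to obtain $(K)$-a.e.\ from Theorem~\ref{nodewiseImplicationOfK}, applied node by node. That theorem shows that $(K_{(S,j)})$ holds at every \emph{good} node $(S,j)$, provided (TC$_{\mbox{bad}}$) and (P$_{\mbox{bad}}$) hold. Since these two properties are global hypotheses on $\mathcal{S}$, for each fixed $j\geq 0$ it suffices to show that the set
\[
B_j\equiv\{S\in\mathcal{S}:\ (S,j)\ \mbox{is a bad node}\}
\]
is an $\overline{I}$-null set. Then $(K_{(S,j)})$ holds for all $S\notin B_j$, which is exactly $(K_j)$-a.e., and $(K)$-a.e.\ follows by taking all $j$ (no union over $j$ is needed, as Definition~\ref{definitionOfKa.e.} is per $j$). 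For $j=0$ the hypothesis that $(S,0)$ is good gives $(K_{(S,0)})$ directly.

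The first step is to control the bad nodes. I would argue that $(S,j)$ bad means $\mathcal{S}_{(S,j)}=N(S,j)$, so
\[
B_j\subseteq \bigcup_{S\in\mathcal{S}} N(S,0)\cap\{\ldots\}\subseteq N(S,0).
\]
The reason is that any $\tilde S\in\mathcal{S}_{(S,j)}$ passes through an arbitrage node $(\tilde S,k)$ with $k\ge j$ and $\tilde S_{k+1}\neq\tilde S_k$, and hence lies in $N(\tilde S,0)=N(S,0)$ (the latter set is independent of $S$, since every trajectory shares $s_0$). So it remains to show that $N(S,0)$ is null.

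The second step, which I expect to be the main obstacle, is proving that $N(S,0)$ is null. I would import this from \cite{bender3}, specifically the arguments behind Proposition~7.2 and Lemma~A.1.3 that were already used in Proposition~\ref{prop:locaL-ae} with $j=0$. Since $(S,0)$ is good, those arguments give $(L)$-a.e.\ on $\mathcal{S}$. The trajectories that move at an arbitrage node, type II or type I, then lie in $\{\tau^{\#}<\infty\}$ for the stopping time $\tau^{\#}$ built with $j=0$ in the proof of Theorem~\ref{nodewiseImplicationOfK}. Indeed, a type II arbitrage node makes $(L)$ fail (Proposition~3.10 of \cite{bender3}), and a type I node with a move triggers the second alternative in the definition of $\tau^{\#}$. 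That proof already established that $\{\tau^{\#}<\infty\}$ is $\overline{I}$-null, so $N(S,0)\subseteq\{\tau^{\#}<\infty\}$ is null, hence so is $B_j$.

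In summary: for every $j$ and every $S\notin B_j$, the node $(S,j)$ is good, so Theorem~\ref{nodewiseImplicationOfK} yields $(K_{(S,j)})$. Since $B_j$ is null, $(K_j)$-a.e.\ holds for each $j$, which proves the corollary. The point requiring care is the inclusion of bad-node trajectories in $\{\tau^{\#}<\infty\}$, since it relies on the characterization of $(L)$ failures via arbitrage nodes from \cite{bender3}.
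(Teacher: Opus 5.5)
Your argument is correct and has the same skeleton as the paper's proof. In both, you show that for each $j$ the set $B_j$ of $S$ with $(S,j)$ bad is null, and then apply Theorem~\ref{nodewiseImplicationOfK} at every remaining (good) node. Where you differ is the justification that $B_j$ is null.

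The paper obtains $(L)$-a.e.\ from Corollary 3.3 and Proposition 7.2 of \cite{bender3}. It then uses Corollary 3.13 of \cite{bender3}, which says $(L_{(S,j)})$ holds iff $(S,j)$ is good. So $B_j$ is exactly the set where $(L_{(S,j)})$ fails, and it lies inside $\mathcal{N}^{(L)}$.

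You instead use only the elementary direction. If $(S,j)$ is bad, then $S\in\mathcal{S}_{(S,j)}=N(S,j)\subseteq N(S,0)$. Every trajectory that moves at an arbitrage node lies in $\{\tau^{\#}<\infty\}$: type II via Proposition 3.10 of \cite{bender3}, type I via the second clause defining $\tau^{\#}$. The proof of Theorem~\ref{nodewiseImplicationOfK} shows this set is null when $j=0$.

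What each route buys:
\begin{itemize}
\item The paper's route is a two-line consequence of the characterization of good nodes, and it identifies the exceptional set with the $(L)$-failure set.
\item Your route gives the more concrete fact that the whole set $N(S,0)$ of trajectories that ever move at an arbitrage node is null. The cost is that you extract a claim from inside another proof. That claim rests on Lemma A.1.3 of \cite{bender3} and on Proposition~\ref{prop:locaL-ae}, hence on the same results of \cite{bender3}, so no machinery is actually avoided.
\end{itemize}

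One small repair: the displayed chain $B_j\subseteq\bigcup_{S}N(S,0)\cap\{\ldots\}\subseteq N(S,0)$ is not meaningful as written. Just state $S\in N(S,j)\subseteq N(S,0)$.
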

\begin{proof}
From Corollary 3.3 and Proposition 7.2, item 1, in \cite{bender3} it follows that both $(L)-a.e.$ and property $(P)$ both hold.  Moreover, Corollary 3.13 in \cite{bender3} yields that $(L_{(S,j)})$ holds if and only if $(S,j)$ is a good node.  Therefore the 
hypothesis of $(S,j)$ being a good node is available a.e. for all $j \geq 0$ in our Proposition \ref{prop:locaL-ae}. This fact implies that the hypothesis in Theorem \ref{nodewiseImplicationOfK} are also available a.e. for all $j \geq 0$, it then follows from the said theorem that $(K)-a.e.$ holds.
\end{proof}

\backmatter

\bmhead{Acknowledgements}

The authors gratefully acknowledge the contributions of
Dr. Alfredo Gonz\'alez, who participated in the early stages
of this project and whose ideas influenced its development.
He passed away before the completion of this work.

The authors also thank Dr. Konrad Gajewski for contributions
made during the course of his Ph.D. thesis (\cite{konrad}), completed
in September 2022, where preliminary versions of some of
the results of this paper appeared.


\noindent
\textbf{Funding:}
S. Ferrando acknowledges partial support from an NSERC Discovery Grant.

























\begin{thebibliography}{99}

\bibitem{bender1} C. Bender, S.E. Ferrando and A.L. Gonzalez (2025),  \emph{Conditional Non-Lattice Integration, Pricing and Superhedging}. Revista de la Union Matematica Argentina, Volume {\bf 68}, No 2, 627-676.

%

\bibitem{bender3}  C. Bender, S.E. Ferrando, A.L. Gonzalez and K. Gajewski (2025),
\emph{Superhedging Supermartingales}.  International Journal of Approximate Reasoning, Volume 187, December 2025, 109567.




%
%

%




\bibitem{ferrando} S.E. Ferrando and A.L. Gonzalez (2018), \emph{Trajectorial martingale transforms.
Convergence and integration}. New York Journal of Mathematics, {\bf 24}, 702-738.



\bibitem{konrad} K. Gajewski (2022),
\emph{Non-Probabilisitic Supermartingales. Trajectorial Models for Two Stocks.}.
PhD. Thesis, Department of Mathematics, Toronto Metropolitan University, Toronto, ON, Canada, 2022. 
\url{https://doi.org/10.32920/29901713}







\bibitem{konig} H. K\"{o}nig (1982). \emph{Integraltheorie ohne Verbandspostulat}. Mathematische Annalen {\bf 258}, 447-458.


\bibitem
{leinert} M. Leinert (1982),  \emph{Daniell-Stone integration without the lattice condition}. Archiv der Mathematik, {\bf 38}, 258-265.

%
%
%
%
%
%
\bibitem{shafer} G. Shafer and V. Vovk (2019),
Game-Theoretic Foundations for Probability and Finance. Wiley.

\bibitem{shiryaev} A.N. Shiryaev (2007),
Optimal Stopping Rules. Springer Verlag.


\bibitem{VdVW}
A. W. van der Vaart and J. A. Wellner (1996), Weak Convergence and Empirical Processes. Springer Verlag.

%


\end{thebibliography}
\end{document}